\def\@settitle{%
  \vspace*{-20pt}
  \begin{flushleft}%
    \baselineskip14\p@\relax
    \normalfont\bfseries\LARGE
%    \uppercasenonmath\@title
    \@title
  \end{flushleft}%
}
\def\@setauthors{%
  \begingroup
  \def\thanks{\protect\thanks@warning}%
  \trivlist
  %\centering
  \large \@topsep30\p@\relax
  \advance\@topsep by -\baselineskip
  \item\relax
  \author@andify\authors
  \def\\{\protect\linebreak}%
%  \MakeUppercase{\authors}%
  \authors
  \ifx\@empty\contribs
  \else
    ,\penalty-3 \space \@setcontribs
    \@closetoccontribs
  \fi
  \normalfont
%  \@setaddresses
  \endtrivlist
  \endgroup
}
\def\@setabstracta{%
    \ifvoid\abstractbox
  \else
    \skip@25\p@ \advance\skip@-\lastskip
    \advance\skip@-\baselineskip \vskip\skip@
%    \hrule\vskip2pt
    \box\abstractbox
    \prevdepth\z@ % because \abstractbox is a vtop
%    \vskip2pt\hrule
    \vskip-10pt
  \fi
}
\renewenvironment{abstract}{%
  \ifx\maketitle\relax
    \ClassWarning{\@classname}{Abstract should precede
      \protect\maketitle\space in AMS document classes; reported}%
  \fi
  \global\setbox\abstractbox=\vtop \bgroup
    \normalfont\small
    \list{}{\labelwidth\z@
      \leftmargin0pc \rightmargin\leftmargin
      \listparindent\normalparindent \itemindent\z@
      \parsep\z@ \@plus\p@
      
    }%
    \item[\hskip\labelsep\bfseries\abstractname.]%
}{%
  \endlist\egroup
  \ifx\@setabstract\relax \@setabstracta \fi
}
\def\section{\@startsection{section}{1}%
  \z@{-1.2\linespacing\@plus-.5\linespacing}{.8\linespacing}%
  {\normalfont\bfseries\large}}
\def\subsection{\@startsection{subsection}{2}%
  \z@{-.8\linespacing\@plus-.3\linespacing}{.3\linespacing\@plus.2\linespacing}%
  {\normalfont\bfseries}}
\def\subsubsection{\@startsection{subsubsection}{3}%
  \z@{.7\linespacing\@plus.1\linespacing}{-1.5ex}%
  {\normalfont\itshape}}
\def\@secnumfont{\bfseries}
\theoremstyle{plain}
\newtheorem{theorem}{Theorem}[section]
\newtheorem*{thmx}{Theorem}
\newtheorem{proposition}[theorem]{Proposition}
\newtheorem{lemma}[theorem]{Lemma}
\newtheorem{corollary}[theorem]{Corollary}
\theoremstyle{definition}
\newtheorem{definition}[theorem]{Definition}
\newtheorem{example}[theorem]{Example}
\theoremstyle{remark}
\newtheorem{remark}[theorem]{Remark}
\newcommand{\C}{\mathbb{C}}
\newcommand{\Q}{\mathbb{Q}}
\newcommand{\R}{\mathbb{R}}
\newcommand{\Z}{\mathbb{Z}}
\newcommand{\CP}{\mathbb{C}P}
\def\pa{\partial}
\def\mcal{\mathcal}
\def\frak{\mathfrak}
\def\scr{\mathscr}
\numberwithin{equation}{section} \numberwithin{table}{section}
\def\to{\mathchoice{\longrightarrow}{\rightarrow}{\rightarrow}{\rightarrow}}
\newcommand{\shortxra}[2][]{\ext@arrow 0359\rightarrowfill@{#1}{#2}}
\def\longrightarrowfill@{\arrowfill@\relbar\relbar\longrightarrow}
\newcommand{\longxra}[2][]{\ext@arrow 0359\longrightarrowfill@{#1}{#2}}
\numberwithin{equation}{section}
\begin{document}                                                                          
\title[Disk potential functions for quadrics]{Disk potential functions for quadrics}

\author{Yoosik Kim}
\address{Department of Mathematics, Pusan National University}
\email{yoosik@pusan.ac.kr}

%\date{\today}
%\subjclass[2010]{11F33, 11F80, 11G18}
%\keywords{Cuspidal group, Eisenstein ideals}

\begin{abstract}
We compute the disk potential of Gelfand--Zeitlin monotone torus fiber in a quadric hypersurface by exploiting toric degenerations, Lie theoretical mirror symmetry, and the structural result of the monotone Fukaya category. 
\end{abstract}

%\date{\today}
%\subjclass[2010]{11F33, 11F80, 11G18}
%\keywords{Cuspidal group, Eisenstein ideals}

\maketitle
\setcounter{tocdepth}{1} %if you want to show more sections on the contents, change this number 1 to 2 or 3.
\tableofcontents

\section{Introduction}

The disk potential introduced by Fukaya--Oh--Ohta--Ono \cite{FOOObook1} is a generating function counting pseudo-holomorphic disks bounded by a Lagrangian submanifold. Since its introduction, the disk potential has been played a pivotal role in symplectic topology and mirror symmetry. When a Lagrangian submanifold $L$ is a torus and its disk potential is defined, the disk potential governs deformations of underlying Fukaya's $A_\infty$-algebra associated to $L$. In particular, it has been effectively employed to detect Lagrangian tori having non-trivial (deformed) Floer cohomology. In the context of SYZ mirror symmetry beyond Calabi--Yau manifolds, the disk potential provides a Landau--Ginzburg mirror of the ambient symplectic manifold according to Auroux \cite{AurouxTdual}. 

Computing disk potential functions is in general a non-trivial task as it requires classification of holomorphic disks and computation of counting invariants (especially when the abstract perturbation is turning on to achieve the transversality of moduli spaces). When it comes to the disk potentials, the most well-studied examples are toric manifolds and orbifolds. They are endowed with a Lagrangian torus fibration, consisting of free real torus orbits on a dense subset. Cho--Oh \cite{ChoOh} classified the holomorphic disks of Maslov index two bounded by each toric fiber. In particular, for compact Fano toric manifolds, the classification shows that the disk potential agrees with the Givental--Hori--Vafa superpotential, which can compute the quantum cohomology ring as its Jacobian ring. For general compact toric manifolds, Fukaya--Oh--Ohta--Ono \cite{FOOOToric1} constructed Lagrangian Floer theory on the de Rham model, see also \cite{WoodwardToric, BiranCornea, ChoPoddar}. Especially, they proved that the disk potential of a toric fiber can be defined and moreover the counting is invariant under equivariant perturbations so that the disk potential is well-defined (up to a choice of basis for the lattice). For toric Calabi--Yau manifolds and semi-Fano toric manifolds, their disk potential was obtained by computing non-trivial counting invariants from virtually perturbed moduli spaces, see \cite{ChanLauLeung, ChanLauLeungTseng} for instance.

A natural follow-up direction is to investigate algebraic varieties occurring at generic fibers of a toric degeneration since they can be understood via the central irreducible toric variety (\cite{AurouxSpecial, NishinouNoharaUeda, FOOOS2S2}). A particular type of toric degenerations such that the central toric variety admits a small toric resolution and each variety of the flat family is Fano was firstly studied. The toric degeneration of a partial flag variety (of type A) constructed by Gonciulea--Lakshmibai \cite{GonciuleaLakshmibai} is a prototype of such degenerations. Indeed, it was exploited to propose mirror construction of Calabi--Yau complete intersections in partial flag varieties in \cite{BCFKvS}. In this circumstance, Nishinou, Nohara, and Ueda constructed a Lagrangian torus fibration commuting with the toric moment map via a continuous extension of a densely defined symplectomorphism arising from the gradient Hamiltonian flow in \cite{WERuan2}. Moreover, they computed the disk potential of a Lagrangian torus. Indeed, the disk potential can be immediately written from the data of the facets of the lattice polytope associated to the central toric variety as in the case of Fano toric manifolds. 

However, there are still many interesting toric degenerations such that the central fiber does not admit any small toric resolutions. As a result, the disk potential of the Lagrangian fibration coming from a Newton--Okounkov body (and its corresponding toric degeneration) in \cite{HaradaKaveh} remains unknown. Moreover, it hinders Floer theoretical interpretation of superpotentials on the mirror Landau--Ginzburg model obtained by the mutations in the theory of cluster algebras and varieties.  

Some notable examples are in order. The Gelfand--Zeitlin system of every isotropic partial flag variety (of type B and D) having complex dimension $\geq 2$ in \cite{Thimm, GuilleminSternbergCCI} is an example. The system is compatible with the most ``standard" toric degeneration of isotropic partial flag variety. Namely, it converges to the toric moment map associated to the corresponding Gelfand--Zeitlin polytope. Secondly, by work of \cite{Littelmann, BerensteinZelevinsky, Caldero, HaradaKaveh}, each string polytope gives rise to a toric degeneration and a completely integrable system on a partial flag variety. Even on partial flag varieties of type A, there are completely integrable systems which do not satisfy the above condition corresponding to certain choices of reduced expression of the longest element of the Weyl group, see \cite[Remark 6.9]{ChoKimLeePark}. Another class of important examples includes bending systems on polygon spaces in \cite{KapovichMillson}. According to \cite{HausmannKnutson, NoharaUedaGrPoly}), the polygon space can be obtained from the symplectic reduction of the Grassmannian of two planes by the maximal torus. Moreover, the bending systems are descended from the (generalized) Gelfand--Zeitlin systems. Although each Gelfand--Zeitlin system \emph{does} fit into the above case, the corresponding bending system does \emph{not} in general. 

The main purpose of this paper is to enhance our understanding of disk potentials of generic fibers in toric degenerations beyond the setting in \cite{NishinouNoharaUeda}. The main difficulty lies on analyzing extra disk contribution intersecting lower dimensional strata. Indeed, the number of terms of disk potential is in general greater than that of facets of the polytope. Moreover, the degree of the evaluation map from the moduli space of stable maps with certain topological constraints to the Lagrangian, which is called a counting invariant or open Gromov--Witten invariant, can be {non-trivial}. Thus, computing the counting invariants is one of the essential steps.  

We briefly sketch the main idea how to compute the disk potential function of a smooth quadric hypersurface $\mcal{Q}_n$. The quadric $\mcal{Q}_n$ can be realized as an $\mathrm{SO}(n+2)$ coadjoint orbit. It has the Gelfand--Zeitlin (GZ) system, a completely integrable system constructed by Thimm's method \cite{Thimm, GuilleminSternbergCCI}. Indeed, this system degenerates into the toric moment associated to the Gelfand--Zeitlin polytope \cite{NishinouNoharaUeda2}. Using the toric degeneration of completely integrable systems, we classify a possible list of homotopy classes which can contribute to the disk potential of the torus fiber of the GZ system. As the central fiber is a toric orbifold, the classification result in \cite{ChoPoddar} of holomorphic (orbi-)disks helps us to compute counting invariants of most homotopy classes. We then apply a structural result of the monotone Fukaya category \cite{SheridanFano} and Lie theoretical mirror symmetry \cite{PechRietschWilliams} to compute the remaining non-trivial counting invariants.

\begin{thmx}[Theorem~\ref{theorem_main} and Corollary~\ref{corollary_nondisplaceable}]
The disk potential of a monotone Lagrangian Gelfand--Zeitlin torus fiber in a quadric hypersurface $\mcal{Q}_n$ of $\CP^{n+1}$ is 
$$
W(\mathbf{z}) = \frac{1}{z_n} + \frac{z_n}{z_{n-1}} + \dots + \frac{z_3}{z_2} + \frac{z_{2}}{z_{1}} + 2 z_2 + z_1 z_2.
$$

In particular, the monotone Lagrangian Gelfand--Zeitlin torus fiber can not be displaceable by any Hamiltonian diffeomorphism.
\end{thmx}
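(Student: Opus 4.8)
The plan is to treat the two assertions separately: first pin down the closed form of $W(\mathbf{z})$ by combining the toric degeneration of the Gelfand--Zeitlin system with the Cho--Poddar enumeration of holomorphic orbi-disks and a rigidity input coming from the monotone Fukaya category and the Lie-theoretic mirror of the quadric; then deduce non-displaceability by producing critical points of $W$ and invoking the Floer-theoretic non-displaceability criterion. For the set-up, realize $\mcal{Q}_n\subset\CP^{n+1}$ as an $\mathrm{SO}(n+2)$-coadjoint orbit, a Fano variety of index $n$, carrying the Gelfand--Zeitlin completely integrable system $\Phi\colon\mcal{Q}_n\to\Delta$, where $\Delta$ is the Gelfand--Zeitlin polytope. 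After rescaling the symplectic form so that $\mcal{Q}_n$ is monotone, let $L=\Phi^{-1}(u_0)$ be the unique fiber over the balanced interior point $u_0$; it is a Lagrangian $n$-torus sitting in the open dense locus on which $\Phi$ restricts to a torus fibration. By \cite{NishinouNoharaUeda2}, $\Phi$ degenerates to the toric moment map of the (generally singular) toric variety $X_0$ attached to $\Delta$, and the gradient-Hamiltonian flow gives a continuous map $\mcal{Q}_n\to X_0$ that is a symplectomorphism away from a complex-codimension-one subset and there intertwines $\Phi$ with the toric moment map.

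\textbf{Enumerating classes and computing the easy counts.}
Transporting moduli of disks through this degeneration, one shows that a class $\beta\in\pi_2(\mcal{Q}_n,L)$ of Maslov index two can carry holomorphic disks only if it belongs to a short explicit list: the $n$ basic classes $\beta_1,\dots,\beta_n$ associated to the facets of $\Delta$, which---exactly as in \cite{NishinouNoharaUeda}---yield the chain monomials $z_n^{-1},\ z_nz_{n-1}^{-1},\ \dots,\ z_3z_2^{-1},\ z_2z_1^{-1}$ each with open Gromov--Witten invariant $1$; and finitely many ``extra'' classes whose representatives are forced through the codimension $\ge 2$ strata of $X_0$. An area and Maslov-index bookkeeping, combined with the Cho--Poddar classification \cite{ChoPoddar} of holomorphic orbi-disks in $X_0$ applied to those extra classes whose disks avoid the deepest strata, shows that only two extra classes contribute, producing the monomials $z_2$ and $z_1z_2$, and that the class giving $z_1z_2$ has invariant $1$. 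Thus $W(\mathbf{z})$ is determined up to a single unknown: the open Gromov--Witten invariant $c$ of the class contributing $c\,z_2$.

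\textbf{Pinning down the last invariant.}
To fix $c$, apply Sheridan's structural theorem \cite{SheridanFano} on the monotone Fukaya category: for a monotone symplectic manifold the critical values of the disk potential of a torus fiber must be eigenvalues of quantum multiplication by $c_1$, and the way the critical locus of $W$ distributes over these eigenvalues is constrained by the (non)vanishing of the corresponding summands of the Fukaya category. The spectrum of $c_1\star$ on $QH^*(\mcal{Q}_n)$ is known and agrees with the data read off from the Lie-theoretic Landau--Ginzburg mirror of Pech--Rietsch--Williams \cite{PechRietschWilliams}; requiring that the partially-determined $W$ (with unknown $c$) be consistent with this spectral data forces $c=2$, completing the computation of $W$. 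This step is the main obstacle: the disks realizing the $z_2$ term necessarily sweep through the singular strata of $X_0$, so the degeneration no longer reduces the count to Cho--Poddar and a direct virtual computation is delicate; substituting the rigidity of \cite{SheridanFano} together with the (mirror-symmetric) quantum cohomology of the quadric for that computation is what makes the argument work.

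\textbf{Non-displaceability.}
With $W$ explicit, solve $dW=0$. From $\partial_{z_1}W=z_2(1-z_1^{-2})=0$ we get $z_1=\pm1$; plugging into $\partial_{z_2}W=-z_3z_2^{-2}+z_1^{-1}+2+z_1=0$ excludes $z_1=-1$ (it would force $z_3=0$) and gives $z_3=4z_2^2$ when $z_1=1$. The relations $\partial_{z_j}W=-z_{j+1}z_j^{-2}+z_{j-1}^{-1}=0$ for $3\le j\le n-1$ then give $z_j=4^{\,j-2}z_2^{\,j-1}$, and the remaining equation $\partial_{z_n}W=-z_n^{-2}+z_{n-1}^{-1}=0$, i.e.\ $z_{n-1}=z_n^2$, reduces to $z_2^{\,n}=4^{\,1-n}$. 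Hence $W$ has $n$ critical points, all with coordinates in $(\C^*)^n$, which is exactly the admissibility condition for a bounding cochain. Each such critical point $\mathbf{z}^*$ determines a weak bounding cochain $b^*$ for $L$, and by the non-displaceability criterion of Fukaya--Oh--Ohta--Ono \cite{FOOObook1} the Floer cohomology $HF\big((L,b^*),(L,b^*);\Lambda\big)$ is non-zero; therefore $L$ cannot be displaced by any Hamiltonian diffeomorphism.
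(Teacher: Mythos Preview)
Your outline follows the paper's strategy closely: classify Maslov-two classes via the toric degeneration, compute the ``easy'' open Gromov--Witten invariants from the central toric fiber, then use Sheridan's structural theorem together with the Pech--Rietsch--Williams eigenvalue list to fix the one remaining coefficient, and finally read off non-displaceability from the critical points of $W$. Your critical-point computation is also correct.

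There is, however, a bookkeeping slip in your enumeration. The Gelfand--Zeitlin polytope $\Delta$ for $\mcal{Q}_n$ is an $n$-simplex and hence has $n+1$ facets, not $n$. In the paper's notation the facet $f_0$ is supported by $u_1+u_2=0$, and the associated basic class $\beta_0$ has boundary monomial $z_1z_2$. So $z_1z_2$ is a \emph{facet} term with open GW invariant $1$ by exactly the same argument as the other $\beta_i$; it is not one of your ``extra'' classes. There is only \emph{one} genuinely extra monomial, namely $\kappa\cdot z_2$, coming from disks meeting the inverse image of the codimension-two stratum $f_{01}=f_0\cap f_1$ (where the central toric variety is singular); this is the single unknown you then determine via Sheridan and the mirror eigenvalues. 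The miscount does not change the final formula, but it misattributes where the difficulty lies: no separate mechanism is needed for $z_1z_2$, and the extra-class analysis---which the paper carries out by passing to the smooth toric \emph{blowup} $\widehat{\mcal{X}}_0$ along the singular stratum, combining Cho--Oh on $\widehat{\mcal{X}}_0$ with Cho--Poddar area bounds on $\mcal{X}_0$, rather than working purely on the orbifold---only has to isolate the single boundary class giving $z_2$.
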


\begin{remark}
\begin{itemize}
\item The disk potential agrees with the Przhiyalkovski\u{\i}'s Landau--Ginzburg mirror \cite{Przhiya}, which is a cluster chart of Pech--Rietsch--Williams's Landau--Ginzburg mirror in \cite{PechRietschWilliams}, see Corollary~\ref{corollary_Pzrmirror}.
\item When $n = 2$, the disk potential of $\mcal{Q}_2 \simeq \CP^1 \times \CP^1$ was computed by Auroux  and Fukaya--Oh--Ohta--Ono \cite{AurouxSpecial, FOOOS2S2}. 
\item Homological mirror symmetry of quadric hypersurfaces (more generally Fano hypersurfaces) was proven by Sheridan \cite{SheridanFano}. He employed an immersed Lagrangian sphere with bounding cochains to split-generate the monotone Fukaya category and the non-zero objects were classified into two groups$\colon$ objects corresponding to a small eigenvalue and objects corresponding to a big eigenvalue. The monotone torus $L$ in Theorem~\ref{theorem_main} together with $\nabla$ corresponding to a critical point is a non-zero object with small eigenvalue. Moreover, by the structural result in \cite{SheridanFano}, each non-zero object $(L, \nabla)$ split-generates one summand of the quantum cohomology with a non-zero eigenvalue. 
\end{itemize}
\end{remark}

\subsection*{Acknowlodgement}

The author express his deep gratitude to Yunhyung Cho, Hansol Hong and Siu-Cheong Lau.
The paper grows out from the collaborations and discussions with them. The author would like to thank the anonymous referees for the helpful comments. This work was supported by the National Research Foundation of Korea (NRF) grant funded by the Korea government NRF-2021R1F1A1057739 and NRF-2020R1A5A1016126, and Pusan National University Research Grant, 2021.

\section{Preliminaries on quadrics}

Let $\mcal{Q}_n$ be a smooth quadric hypersurface in $\CP^{n+1}$ for $n \geq 2$. 
In this section, we collect some results on $\mcal{Q}_n$ which will be used in this manuscript later on.

\subsection{Topology of quadrics}

Any two smooth hypersurfaces of the same degree equipped with the symplectic form from the ambient projective space are symplectomorphic. In particular, every two dimensional smooth quadric hypersurface $\mcal{Q}_2$ is diffeomorphic to $\CP^1 \times \CP^1$ via the Segre embedding. By applying the Lefschetz hyperplane theorem to the Veronese embedding of $\CP^{n+1}$ with $n \geq 3$, we obtain the following lemma.

\begin{lemma}\label{lemma_pi2}
For $n \geq 3$, the $2$nd homotopy group of $\mcal{Q}_n$ is isomorphic to $\Z$, i.e. $\pi_2(\mcal{Q}_n) \simeq \Z$.
\end{lemma}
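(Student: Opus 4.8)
The plan is to compute $\pi_2(\mcal{Q}_n)$ for $n \geq 3$ by combining the long exact sequence of homotopy groups with the Lefschetz hyperplane theorem. The key point is that a smooth quadric hypersurface $\mcal{Q}_n \subset \CP^{n+1}$ is simply connected and its low-degree homotopy matches that of the ambient space once $n$ is large enough. I would proceed as follows.

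First I would recall that $\mcal{Q}_n$ is a smooth projective variety of complex dimension $n$, hence a closed simply connected manifold (for $n \geq 3$ simple connectivity also follows directly from Lefschetz, since $\pi_1(\CP^{n+1}) = 1$ and the inclusion induces an isomorphism on $\pi_1$ when $n \geq 3$). By the Hurewicz theorem, $\pi_2(\mcal{Q}_n) \simeq H_2(\mcal{Q}_n; \Z)$, so it suffices to compute the second integral homology. Next, by the Lefschetz hyperplane theorem applied to the smooth hypersurface $\mcal{Q}_n \subset \CP^{n+1}$, the inclusion $\mcal{Q}_n \hookrightarrow \CP^{n+1}$ induces isomorphisms $H_k(\mcal{Q}_n; \Z) \xrightarrow{\sim} H_k(\CP^{n+1}; \Z)$ for $k \leq n-1$ and a surjection for $k = n$; since $n \geq 3$ we have $2 \leq n-1$, so $H_2(\mcal{Q}_n; \Z) \simeq H_2(\CP^{n+1}; \Z) \simeq \Z$. (Alternatively, one invokes the Lefschetz theorem for the Veronese embedding as the excerpt suggests: the degree-two Veronese re-embeds the quadric as a hyperplane section of a higher-dimensional projective space, after which the same argument applies. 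Either route gives the same conclusion.) Combining with Hurewicz yields $\pi_2(\mcal{Q}_n) \simeq \Z$.

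The only genuine subtlety — and the step I would be most careful about — is checking the hypotheses of the Lefschetz hyperplane theorem in the range needed, namely that $\mcal{Q}_n$ really is an ample (in fact very ample, being a hyperplane or Veronese-hyperplane section) smooth divisor and that $n-1 \geq 2$, which forces the restriction $n \geq 3$ (this is exactly why the case $n = 2$, where $\mcal{Q}_2 \simeq \CP^1 \times \CP^1$ has $\pi_2 \simeq \Z^2$, is excluded). Everything else — simple connectivity, Hurewicz, the identification $H_2(\CP^{n+1}) \simeq \Z$ — is standard. So the proof is essentially: invoke Lefschetz to transport $H_2$ from the ambient projective space, then invoke Hurewicz to pass from $H_2$ to $\pi_2$.
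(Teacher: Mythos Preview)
Your proof is correct and follows essentially the same approach as the paper, which simply asserts the result as a consequence of the Lefschetz hyperplane theorem applied to the Veronese embedding of $\CP^{n+1}$. Your write-up is in fact more detailed: you spell out the Hurewicz step and the $n\geq 3$ numerology, and you correctly note that one may either invoke Lefschetz for the ample (degree-two) hypersurface directly or pass through the Veronese embedding to reduce to a genuine hyperplane section.
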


Every quadric hypersurface $\mcal{Q}_n$  is defined by a quadratic form associated to a symmetric bilinear form $\mcal{B}$ on the complex vector space $\C^{n+2}$. That is, $\mcal{Q}_n$ is the vanishing locus of $\mcal{B} (\mathbf{x}, \mathbf{x})$ in $\CP^{n+1}$ where $\mathbf{x} := (x_0, \dots, x_{n+1})$. Thus, the quadric hypersurface $\mcal{Q}_n$ can be viewed as a complex orthogonal Grassmannian $\mathrm{OG}(1, \C^{n+2})$, the space of one dimensional isotropic subspace of $\C^{n+2}$ when $n \geq 1$. Also, $\mcal{Q}_n$ is isomorphic to the quotient $G_{\C} / P$ by a parabolic subgroup $P$ where $G_\C$ is the complex special orthogonal group $\mathrm{SO}(n+2; \C)$.

The cohomology group of the orthogonal Grassmannian $\mathrm{OG}(1, \C^{n+2})$ (more generally partial flag varieties) has a distinctive additive basis consisting of Schubert classes. From the combinatorics of the parabolic Weyl group of the complex Lie group, we count the number of Schubert classes of $\mathrm{OG}(1, \C^{n+2})$, which leads to the following topological fact.

\begin{lemma} The sum of Betti numbers of $\mcal{Q}_n$ is 
$$
\begin{cases}
n+1 &\mbox{if $n$ is odd},\\ 
n+2 &\mbox{if $n$ is even.} 
\end{cases}
$$
\end{lemma}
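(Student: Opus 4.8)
The plan is to follow the route indicated just before the statement: realize $\mcal{Q}_n \cong \mathrm{OG}(1,\C^{n+2}) \cong G_\C/P$ and count its Schubert cells. First I would invoke the Bruhat decomposition $G_\C/P = \bigsqcup_{w \in W^P} BwP/P$, where $W^P$ is the set of minimal-length representatives of the cosets $W/W_P$, with $W$ the Weyl group of $\mathrm{SO}(n+2;\C)$ and $W_P$ the parabolic subgroup attached to $P$; each cell $BwP/P$ is a complex affine space, hence of even real dimension. Consequently the cellular cochain complex of this CW structure has vanishing differentials, so $H^{\mathrm{odd}}(\mcal{Q}_n)=0$, every $H^{2k}(\mcal{Q}_n;\Z)$ is torsion-free, and $\sum_i b_i(\mcal{Q}_n) = |W^P| = |W|/|W_P|$. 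This reduces the lemma to a computation of group orders.

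The second step is to pin down $W_P$ and carry out the count. Since $\mcal{Q}_n$ is the orthogonal Grassmannian of isotropic lines, $P$ is the maximal parabolic of $\mathrm{SO}(n+2;\C)$ obtained by deleting the node of the Dynkin diagram carrying the vector representation, so $W_P$ is the Weyl group of the semisimple part of the Levi. For $n+2 = 2m+1$, i.e.\ $n$ odd, the ambient group is of type $B_m$ and $W_P$ is of type $B_{m-1}$; for $n+2 = 2m$, i.e.\ $n$ even, the ambient group is of type $D_m$ and $W_P$ is of type $D_{m-1}$. Substituting $|W(B_m)| = 2^m m!$ and $|W(D_m)| = 2^{m-1} m!$ gives, in the odd case, $|W|/|W_P| = 2^m m!\,/\,(2^{m-1}(m-1)!) = 2m = n+1$, and in the even case $|W|/|W_P| = 2^{m-1} m!\,/\,(2^{m-2}(m-1)!) = 2m = n+2$, which is precisely the asserted sum of Betti numbers.

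I would keep in reserve a Lie-theory-free alternative: the Lefschetz hyperplane theorem, already used for Lemma~\ref{lemma_pi2}, applied to $\mcal{Q}_n \hookrightarrow \CP^{n+1}$ gives $b_k(\mcal{Q}_n) = b_k(\CP^{n+1})$ for $k \le n-1$; Poincar\'e duality propagates this to $k \ge n+1$; and the single remaining unknown, the middle Betti number $b_n$, is extracted from a one-time Chern-number computation $\chi(\mcal{Q}_n) = 2\,[h^n]\bigl\{(1+h)^{n+2}/(1+2h)\bigr\}$ (using $c(T\mcal{Q}_n) = (1+h)^{n+2}/(1+2h)$ and $[\mcal{Q}_n] = 2h$ in $\CP^{n+1}$), yielding $b_n = 0$ for $n$ odd and $b_n = 2$ for $n$ even, hence the same totals. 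The only genuinely fussy point in either approach is the bookkeeping around $W_P$ and the low-rank degeneracies of the $D$-series: for $n=2$ one has $D_2 \cong A_1 \times A_1$ with $W_P$ of type $D_1$ trivial, so $|W|/|W_P| = 4$ must be checked directly and matched against $\mcal{Q}_2 \cong \CP^1 \times \CP^1$, and similarly $n=4$ invokes $D_3 \cong A_3$; once these conventions are fixed the rest is routine.
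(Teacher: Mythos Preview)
Your proposal is correct and follows essentially the same approach that the paper indicates just before the lemma: count Schubert classes via $|W^P| = |W|/|W_P|$, with the paper leaving the actual Weyl-group arithmetic implicit while you carry it out in full. Your alternative Lefschetz/Euler-characteristic argument is not in the paper but is a sound independent check.
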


Consider the coadjoint action of the real special orthogonal group $G = \mathrm{SO}(n+2;\R)$ on the dual Lie algebra $\frak{g}^* \simeq \frak{so}(n+2)^*$. Identifying $\frak{g}^* \simeq \frak{g}$ via the Killing form 
\begin{equation}\label{equ_killingforms}
\langle X, Y \rangle := - \frac{1}{2} \mathrm{Tr}(XY)
\end{equation}
on $\frak{g}$, the coadjoint action on $\frak{g}^*$ can be regarded as the adjoint action on the set $\frak{g}$ of $(n+2) \times (n+2)$ skew-symmetric matrices. 

Since $G_{\C} / P \simeq G / K$ where $G$ is the compact real form of $G_{\C} = \mathrm{SO}(n+2;\C)$ and $K = P \cap G$, we have $G/K \simeq \mcal{Q}_{n}$. To realize $\mcal{Q}_{n}$ as a coadjoint orbit, we choose a $\nu$-tuple of real numbers of the following form$\colon$
\begin{equation}\label{equ_givenlambda}
\lambda = (\lambda_1 > \lambda_2 := 0, \dots, \lambda_\nu := 0)
\end{equation}
where $\nu := \lfloor (n+2)/2 \rfloor$. Let $B(\lambda_i)$ be the $(2 \times 2)$ skew-symmetric matrix whose $(1,2)$ entry is $\lambda_i$, that is,
\begin{equation*}
B(\lambda_i) = 
\begin{bmatrix}
0 & \lambda_i \\
-\lambda_i & 0 
\end{bmatrix}.
\end{equation*}
Via the identification given by~\eqref{equ_killingforms}, the coadjoint orbit $\mcal{O}_\lambda^n$ is then defined by the orbit of the $(n+2) \times (n+2)$ block diagonal matrix $D^n_\lambda$ under the adjoint $G$-action where
$$
D^n_\lambda = \begin{cases}
\mathrm{diag}(B(\lambda_1), \dots, B(\lambda_\nu)) &\quad \mbox{if $n$ is even,} \\
\mathrm{diag}(B(\lambda_1), \dots, B(\lambda_\nu), 0_{1 \times 1}) &\quad \mbox{if $n$ is odd.}
\end{cases}
$$

When $n \geq 1$, the orbit $\mcal{O}_\lambda^n$ is the space of skew-symmetric matrices whose eigenvalues are $\pm \lambda_1 \sqrt{-1}, 0, \dots, 0$. The coadjoint orbit becomes a symplectic manifold equipped with the Kirillov--Kostant--Souriau (KKS) form $\omega^n_\lambda$. 

\subsection{Completely integrable systems on quadrics}\label{subsection_cominsysonquad}

To construct a Lagrangian torus fibration, we shall employ a Gelfand--Zeitlin (GZ) system, a completely integrable system on $\mcal{O}_\lambda^n \simeq \mcal{Q}_n$. Its construction is recalled below.

For an $(n+2) \times (n+2)$ square matrix $A$ and an integer $j$ satisfying $2 \leq j \leq n+2$, we denote by $A^{(j)}$ the leading $(j \times j)$ principal submatrix. If $A \in \mcal{O}_\lambda^n \simeq \mcal{Q}_n$, then the submatrix $A^{(j)}$ is also skew-symmetric so that the eigenvalues of $A^{(j)}$ are either all zero or $\pm \lambda^{(j)}_1 \sqrt{-1}, 0, \dots, 0$ for a positive real number $\lambda^{(j)}_1:= \lambda^{(j)}_1(A)$ by the min-max principle. Here, the min-max principle refers to a system of inequalities between eigenvalues of submatrices of a skew-symmetric matrix, see \cite[Chapter 5]{Pabiniak} for instance. When the eigenvalues of $A^{(j)}$ are all zero, we set $\lambda^{(j)}_1(A) = 0$. We then define 
$$
\Phi^n_\lambda := (\Phi_1, \Phi_2, \dots, \Phi_{n}) \colon \mcal{O}_\lambda^n \to \R^n
$$ 
by
\begin{equation}\label{equ_gssys}
\Phi_{j}(A) := 
\begin{cases}
\lambda^{(j+1)}_1(A) &\mbox{if either $j \geq 2$ or ($j = 1$ and $\mathrm{Pf}(A^{(2)}) \geq 0$)} \\
- \lambda^{(2)}_1(A) &\mbox{if $j = 1$ and $\mathrm{Pf}(A^{(2)}) < 0$}
\end{cases}
\end{equation}
where $\mathrm{Pf}(A)$ is the \emph{Pfaffian} of a skew-symmetric matrix $A$. Notice that the indices for $\Phi_\bullet$ and $\lambda^{(\bullet)}_1$ differ by one in order to make a coordinate system for the image of $\Phi_\bullet$ start from one.

Under the identification~\eqref{equ_killingforms}, the system $\Phi^n_\lambda$ agrees with the completely integrable system consisting of collective Hamiltonians constructed by Guillemin and Sternberg \cite{GuilleminSternbergCCI}. It is obtained by applying the Thimm's method \cite{Thimm} to the sequence 
$$
G = \mathrm{SO}(n+2) \supset \mathrm{diag}(\mathrm{SO}(n+1), I_{1}) \supset \mathrm{diag}(\mathrm{SO}(n), I_{2}) \supset \dots \supset \mathrm{diag}(\mathrm{SO}(2), I_{n})
$$
where $I_k$ is the $(k \times k)$ identity matrix. In general, the component $\Phi_j$ is \emph{not} globally smooth. Nevertheless, it is smooth on the open dense set $(\Phi^n_\lambda)^{-1}(\{\mathbf{u} \in \R^{n} \colon u_j \neq 0\})$.  Moreover, according to \cite{GuilleminSternbergGC}, each component of the GZ system is a moment map of a Hamiltonian $S^1$-action on its smooth locus.

\begin{theorem}[\cite{GuilleminSternbergGC, GuilleminSternbergCCI}]\label{theorem_GS}
The map $\Phi^n_\lambda$ in~\eqref{equ_gssys} is a completely integrable system on $\mcal{O}_\lambda^n$. The component $\Phi_j$ generates a Hamiltonian $S^1$-action on the open subset $(\Phi^n_\lambda)^{-1}(\{\mathbf{u} \in \R^{n} \colon u_j \neq 0\})$.
\end{theorem}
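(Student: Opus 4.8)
The plan is to recognize each component of $\Phi^n_\lambda$ as a spectral invariant coming out of Thimm's construction, and then to read off its dynamics from the general principle on collective Hamiltonians. For $2 \le j \le n+2$, let $r_j \colon \so(n+2)^* \to \so(j)^*$ be the map dual to the inclusion $\so(j) \hookrightarrow \so(n+2)$ as the leading block; under the identification~\eqref{equ_killingforms} it sends a skew-symmetric matrix $A$ to its leading principal submatrix $A^{(j)}$, and it is, up to a sign, the moment map of the $\mathrm{SO}(j)$-conjugation action on $\mcal{O}_\lambda^n$, hence a Poisson map. Since every $A \in \mcal{O}_\lambda^n$ has rank $2$, the matrix $A^{(j)}$ has eigenvalues $\pm\lambda_1^{(j)}(A)\sqrt{-1},0,\dots,0$, so $\bigl(\lambda_1^{(j)}\bigr)^2 = p_j \circ r_j$ with $p_j(\zeta) = -\tfrac12\mathrm{Tr}(\zeta^2)$ an $\mathrm{Ad}^*_{\mathrm{SO}(j)}$-invariant (Casimir) polynomial on $\so(j)^*$. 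After the reindexing of~\eqref{equ_gssys}, $\Phi_j = \pm\lambda_1^{(j+1)}$ with the Pfaffian sign inserted only when $j=1$, while $\lambda_1^{(n+2)}\equiv\lambda_1$ is constant on the orbit.

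For Poisson-commutativity, fix $j \le j'$ and factor $r_{j+1} = s \circ r_{j'+1}$ through the Poisson map $s \colon \so(j'+1)^* \to \so(j+1)^*$. Then $p_{j+1}\circ s$ is $\mathrm{Ad}^*_{\mathrm{SO}(j+1)}$-invariant on $\so(j'+1)^*$, whereas $p_{j'+1}$ is a Casimir there, so $\{p_{j+1}\circ s,\,p_{j'+1}\} = 0$; pulling back along the Poisson map $r_{j'+1}$ and restricting to the symplectic leaf $\mcal{O}_\lambda^n$ gives $\{(\lambda_1^{(j+1)})^2,(\lambda_1^{(j'+1)})^2\} = 0$ for all $j,j'$. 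On the dense open set where the relevant components are nonzero we may extract square roots, and the sign twist at $j=1$ does not affect brackets, so the $n$ functions $\Phi_1,\dots,\Phi_n$ Poisson-commute on their common smooth locus; since $n = \tfrac12\dim_\R\mcal{Q}_n$, this is the right number.

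It remains to establish functional independence on a dense open set and the circle-periodicity of each flow. For the latter I would use the collective-motion formula: the Hamiltonian flow of $\varphi\circ\mu$, for a moment map $\mu\colon M\to\frak{k}^*$ and an $\mathrm{Ad}^*$-invariant $\varphi\in C^\infty(\frak{k}^*)$, is $x\mapsto\exp(t\,\xi_{\mu(x)})\cdot x$ with $\xi_\zeta := (d\varphi)_\zeta\in\frak{k}$. Applying this with $\frak{k}=\so(j+1)$, $\mu=\pm r_{j+1}$, and $\varphi = \lambda_1$ (smooth precisely where $\lambda_1 > 0$), a short computation gives $\xi_\zeta = \zeta/\lambda_1(\zeta)$ under~\eqref{equ_killingforms}, and this matrix squares to minus the orthogonal projection onto the $2$-plane supporting $\zeta$, so $\exp(2\pi\xi_\zeta)=\mathrm{id}$. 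Hence the flow of $\Phi_j$ is $2\pi$-periodic on $(\Phi^n_\lambda)^{-1}(\{u_j\neq0\})$, i.e.\ a Hamiltonian $S^1$-action, the Pfaffian sign merely reversing the circle's orientation. For functional independence I would invoke the min--max (interlacing) inequalities, which realize the image of $\Phi^n_\lambda$ as the $n$-dimensional Gelfand--Zeitlin polytope, so $\Phi^n_\lambda$ is a submersion on a dense open set by Sard's theorem; alternatively, on $\bigcap_j\{u_j\neq0\}$ the $n$ commuting circle actions just produced have pointwise linearly independent infinitesimal generators, forcing $d\Phi_1,\dots,d\Phi_n$ to be independent there.

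The real work is the bookkeeping around the loss of smoothness — verifying that the square-root extraction and the $S^1$-periodicity hold exactly on $\{u_j\neq0\}$, i.e.\ that the interlacing pattern confines all degeneracy to the coordinate hyperplanes, and pinning down an explicit dense open set on which the differentials are independent. Once the collective-Hamiltonian formula is in hand, the periodicity computation itself is routine.
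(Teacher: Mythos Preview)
The paper does not supply its own proof of this statement: it is quoted as a result of Guillemin and Sternberg, with the citation \cite{GuilleminSternbergGC, GuilleminSternbergCCI} standing in lieu of an argument. Your proposal is therefore not to be compared against a proof in the paper but against the original literature, and what you have written is a faithful reconstruction of the Thimm/Guillemin--Sternberg mechanism: realize each $\Phi_j$ as (the square root of) a Casimir pulled back along the equivariant moment map $r_{j+1}$, deduce Poisson commutativity from the nested chain of subalgebras, and read off the $2\pi$-periodicity of the flow from the collective-motion formula together with the elementary linear-algebra fact that a rank-two skew-symmetric matrix normalized to spectral radius one exponentiates to the identity after time $2\pi$. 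The computation $\xi_\zeta = \zeta/\lambda_1(\zeta)$ and $\xi_\zeta^2 = -P$ is correct under the Killing form~\eqref{equ_killingforms}, and your two routes to functional independence (full-dimensional image plus Sard, or linear independence of the infinitesimal circle generators) are both standard and valid. In short, your sketch is correct and matches the approach of the cited sources; the paper itself simply imports the theorem.
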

 
The image of a coadjoint orbit under the map $\Phi^n_\lambda$ is a polytope, which is determined by the min-max principle. The image is called a Gelfand--Zeitlin(GZ) polytope and denoted by $\Delta^n_\lambda$. In this particular case where $\mcal{O}_\lambda^n \simeq \mcal{Q}_n$, the image is a simplex determined by
\begin{equation}\label{equ_GZpolytope}
\lambda_1 \geq u_n \geq u_{n-1} \geq \dots \geq u_2 \geq |u_1|
\end{equation}
where $(u_1, u_2, \dots, u_n)$ is the coordinate system for $\R^n$. We can always adjust the size of the polytope by scaling the KKS form so that $\lambda_1$ is assumed to be $n$ without any loss of generality.

Indeed, the GZ system has a peculiar feature$\colon$ every GZ fiber (even over a critical value) is a smooth isotropic submanifold. The topology of GZ fibers was described in terms of iterated bundles, see \cite{ChoKimSO, Lanesympcont}. In the case where $\mcal{O}_\lambda^n \simeq \mcal{Q}_n$, according to the combinatorial test devised in \cite{ChoKimSO}, the non-torus GZ fibers can occur only over the stratum of codimension two determined by $u_2 = 0$, which will be important to us. The GZ fiber over each point in the relative interior of that stratum is a Lagrangian submanifold diffeomorphic to ${S}^2 \times {T}^{n-2}$. 

\begin{example}
When $n = 3$ and $\lambda = (3, 0, 0)$, the coadjoint orbit $\mcal{O}_\lambda^3$ has three types of non-torus fibers. 
\begin{itemize}
\item (The blue point, $u_1 = u_2 = u_3 = 0$) The GZ fiber is ${S}^3$.
\item (The purple point, $u_1 = u_2 = 0, u_3 = 3$) The GZ fiber is ${S}^2$.
\item (Any point in the red strata $u_1 = u_2 = 0 < u_3 < 3$) The GZ fiber is ${S}^2 \times {S}^1$.
\end{itemize}

\begin{figure}[h]
	\scalebox{0.9}{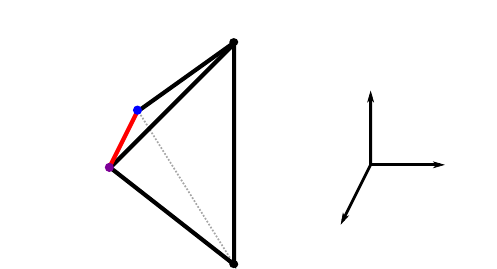}
	\caption{\label{fig_gzfiber} Non-torus fibers of the Gelfand--Zeitlin system in $\mcal{O}_\lambda^3$}	
\end{figure}
\end{example}

\begin{remark}
Precisely speaking, to show that the GZ fiber over a point in the relative interior of $u_2 = 0$ is a product of $S^2$ and $ {T}^{n-2}$, one needs to use the argument in \cite{ChoKimOh1}.
\end{remark}

\subsection{Toric degenerations of quadrics}\label{sec_toricdeg}

The toric degenerations of completely integrable systems were constructed by Nishinou--Nohara--Ueda \cite{NishinouNoharaUeda} and Harada--Kaveh \cite{HaradaKaveh}, using a technique of the gradient Hamiltonian flow in \cite{WERuan2}. In general, there are multiple toric degenerations of partial flag varieties (and completely integrable systems compatible with toric degenerations). In this subsection, we shall specify a toric degeneration which is compatible with the Gelfand--Zeitlin system and polytope for $\mcal{Q}_n$, which was studied in \cite{NishinouNoharaUeda2}. 

Take the following $(n+2) \times (n+2)$ block diagonal matrix 
\begin{equation*}
\mathrm{diag}\left(
\begin{bmatrix}
0 & 1 \\
1 & 0 
\end{bmatrix}, I_{n}
\right)
\end{equation*}
as a bilinear form $\mcal{B}$. The bilinear form $\mcal{B}$ gives rise to a smooth quadric hypersurface$\colon$
$$
\mcal{Q}_n = \left\{ [x_0 \colon x_1 \colon \dots \colon x_{n+1} ] \in \CP^{n+1} ~\colon~ 2x_0x_1 + x_2^2 + \dots + x_{n+1}^2 = 0 \right\}.
$$
To match up with the KKS form coming from the choice~\eqref{equ_givenlambda}, $\mcal{Q}_n$ is adorned with a multiple of the Fubini--Study form $\omega_{FS}$ on $\CP^{n+1}$. 

The following family $\{ \mcal{X}_\mathbf{t} \}_{\mathbf{t} \in \C^{n-1}}$ where
\begin{equation}\label{equ_familyofquadrics}
\mcal{X}_{\mathbf{t}} :=  \left\{ [x_0 \colon x_1 \colon \dots \colon x_{n+1} ] \in \CP^{n+1} ~\colon~ 2x_0x_1 + x_2^2 + t_3 x_3^2 + t_4 x_4^2 + \dots + t_{n+1} x_{n+1}^2 = 0 \right\}
\end{equation}
is a flat family of irreducible algebraic varieties in $\CP^{n+1}$. Notice that the central fiber $\mcal{X}_{\mathbf{0}}$ is a toric variety. Letting $\| \mathbf{x} \| := |x_0|^2 + \dots + |x_{n+1}|^2$,  a toric moment map can be chosen as
\begin{equation}\label{equ_toricmoment}
\Phi_{\mathrm{toric}} (\mathbf{x}) = \frac{\lambda_1}{\| \mathbf{x} \|} \left( {-|x_0|^2 + |x_1|^2}, \sum_{j=0}^2 |x_j|^2, \sum_{j=0}^3 |x_j|^2, \dots, \sum_{j=0}^{n} |x_j|^2 \right),
\end{equation}
so that the image is exactly the GZ polytope $\Delta^n_\lambda$ determined by~\eqref{equ_GZpolytope}.

Furthermore, the toric degeneration $\{ \mcal{X}_{\mathbf{t}} \}$ is also compatible with the GZ system in the following sense. 

\begin{proposition}[Proposition 3.1 in \cite{NishinouNoharaUeda2}]\label{proposition_gradientflow}
Let $\mcal{X}_{\mathbf{t}}$ be the toric degeneration given in~\eqref{equ_familyofquadrics}.
We then have the following$\colon$
\begin{enumerate}
\item A piecewise smooth path $\gamma \colon [0,1] \to \C^{n-1}$ such that $\gamma(1) = (1,1,\dots, 1)$ and $\gamma(0) = \mathbf{0}$.
\item For each pair $(s, t) \in [0,1]^2$ with $s \leq t$, a surjective continuous map $\Psi_{t,s} \colon \mcal{X}_{(t)} \to \mcal{X}_{(s)}$ where $\mcal{X}_{(t)} := \mcal{X}_{\gamma(t)}$, 
\item For each $t \in [0,1]$, a completely integrable system $\Phi_{(t)} \colon \mcal{X}_{(t)} \to \Delta^n_\lambda$ satisfying
\end{enumerate}
\begin{itemize}
\item The map $\Phi_{(0)}$ agrees with the toric moment map in~\eqref{equ_toricmoment},
\item The map $\Phi_{(1)}$ agrees with the Gelfand--Zeitlin system in~\eqref{equ_gssys},
\item Setting $\mathring{\mcal{X}}_0 := \mcal{X}_0 \backslash \mathrm{Sing}(\mcal{X}_0)$ and $\mathring{\mcal{X}}_t := \Psi_{t,0}^{-1}(\mathring{\mcal{X}}_0)$, the map $\Psi_{t,0}$ is a symplectomorphism on $\mathring{\mcal{X}}_t$,
\item The map $\Psi_{t,s}$ makes the following diagram commute.
$$
\xymatrix{
\mcal{X}_{(t)} \ar[dr]_{\Phi_{(t)}} \ar[rr]^{\Psi_{t,s}}  & & \mcal{X}_{(s)} \ar[dl]^{\Phi_{(s)}} \\
& \Delta_\lambda^n & 
}
$$
\end{itemize}
\end{proposition}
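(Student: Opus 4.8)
The plan is to establish Proposition~\ref{proposition_gradientflow} by the gradient Hamiltonian flow technique of Ruan \cite{WERuan2}, in the form adapted to completely integrable systems by Nishinou--Nohara--Ueda \cite{NishinouNoharaUeda, NishinouNoharaUeda2}. For the path I would simply take the straight segment $\gamma(t) = (t, t, \dots, t) \in \C^{n-1}$, so that $\gamma(0) = \mathbf{0}$ and $\gamma(1) = (1, \dots, 1)$; this is (piecewise) smooth and, as one checks from~\eqref{equ_familyofquadrics}, the fiber $\mcal{X}_{(t)} = \{ 2x_0 x_1 + x_2^2 + t(x_3^2 + \dots + x_{n+1}^2) = 0\}$ is a smooth quadric for every $t \in (0,1]$ and degenerates to the singular toric variety $\mcal{X}_0 = \{2x_0 x_1 + x_2^2 = 0\}$ only at $t = 0$. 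I would then form, over this segment, the total space $\mcal{X} \subset \CP^{n+1} \times \C$ equipped with the Kähler form restricted from the product of the (scaled) Fubini--Study form and the standard form on $\C$, and let $f \colon \mcal{X} \to \C$ be the projection onto the parameter.

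Setting $h := \mathrm{Re}(f)$, the gradient Hamiltonian vector field is $V := -\mathrm{grad}(h)/\|\mathrm{grad}(h)\|^2$, defined wherever $\mathrm{grad}(h) \neq 0$, i.e. away from the singular points of the fibers. Its flow decreases $h$ at unit speed, hence carries a fiber to a nearby fiber, and I would define $\Psi_{t,s}$ as the corresponding flow map for $s \leq t$. Ruan's lemma that the flow of $V$ preserves the restriction of $\omega$ to the fibers of $f$ gives at once that $\Psi_{t,s}$ is a symplectomorphism wherever it is genuinely defined, in particular on $\mathring{\mcal{X}}_t$. Declaring $\Phi_{(t)} := \Phi_{\mathrm{toric}} \circ \Psi_{t,0}$ then makes the triangle in the statement commute and yields $\Phi_{(0)} = \Phi_{\mathrm{toric}}$ for free; what remains is (i) to extend $\Psi$ continuously across the singular locus and (ii) to identify $\Phi_{(1)}$ with the Gelfand--Zeitlin system.

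For (i), the singular locus of the central fiber is $\mathrm{Sing}(\mcal{X}_0) = \mcal{X}_0 \cap \{x_0 = x_1 = x_2 = 0\} \cong \CP^{n-2}$, where $V$ degenerates. Following Ruan and Nishinou--Nohara--Ueda, I would show that the flow stays bounded as it approaches this locus and extends to a continuous surjection $\Psi_{t,s}$ on all of $\mcal{X}_{(t)}$, collapsing exactly over $\mathrm{Sing}(\mcal{X}_0)$. Under $\Phi_{\mathrm{toric}}$ this locus maps into the stratum $\{u_1 = u_2 = 0\}$, which is precisely the codimension-two stratum carrying the non-torus fibers $S^2 \times T^{n-2}$ described in Section~\ref{subsection_cominsysonquad}; the collapsing accounts for the extra $S^2$-factor degenerating onto the toric stratum. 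Proving boundedness and continuity near $\CP^{n-2}$ from the explicit quadratic-cone structure of $\mcal{X}_0$ is where the analytic work lies.

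The step I expect to be the real obstacle is (ii), the equality $\Phi_{(1)} = \Phi_{\mathrm{toric}} \circ \Psi_{1,0} = \Phi^n_\lambda$ with the Gelfand--Zeitlin system of~\eqref{equ_gssys}. The strategy is to use that each component $\Phi_j$ generates a Hamiltonian $S^1$-action on its smooth locus (Theorem~\ref{theorem_GS}), as does each toric component $\Phi_{\mathrm{toric}, j}$ on $\mathring{\mcal{X}}_0$, and to show that the gradient Hamiltonian flow intertwines these circle actions --- equivalently, that the action coordinates of the integrable system, built from the submatrix eigenvalue functions $\lambda^{(j+1)}_1$, are constant along the flow lines of $V$. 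Granting this, the transported system $\Phi_{\mathrm{toric}} \circ \Psi_{1,0}$ agrees with $\Phi^n_\lambda$ componentwise, the sign in $\Phi_1$ matching the $-|x_0|^2 + |x_1|^2$ term of~\eqref{equ_toricmoment} through the Pfaffian convention; and the completeness of each $\Phi_{(t)}$, together with the equality of images with $\Delta^n_\lambda$, then follows from the symplectomorphism property on the dense smooth locus. Pinning down this intertwining rigorously --- that the degeneration preserves the collective Hamiltonians defining the system --- is the crux of the argument.
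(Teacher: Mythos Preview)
The paper does not supply its own proof of this proposition: it is quoted verbatim as Proposition~3.1 of \cite{NishinouNoharaUeda2} and used as a black box. So there is no in-paper argument to compare yours against; what one can do is judge your sketch against the strategy in the cited reference.

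Your outline via the gradient Hamiltonian flow is the right mechanism, and your treatment of the extension across $\mathrm{Sing}(\mcal{X}_0)$ is on target. The place where your plan diverges from \cite{NishinouNoharaUeda2}, and where it is weakest, is the choice of path and the handling of item~(ii). You take the diagonal segment $\gamma(t)=(t,\dots,t)$ and then define $\Phi_{(t)}:=\Phi_{\mathrm{toric}}\circ\Psi_{t,0}$, leaving the identification $\Phi_{(1)}=\Phi^n_\lambda$ as a single global step to be proved by an ``intertwining of circle actions''. That is exactly the step that the piecewise path in the statement is designed to avoid. In \cite{NishinouNoharaUeda2} the parameters $t_{n+1},t_n,\dots,t_3$ are switched off one at a time, so that each leg of $\gamma$ implements a single stage of the Thimm chain $\mathrm{SO}(k)\supset\mathrm{SO}(k-1)$; on each leg one has an explicit identification of the relevant action coordinate with the corresponding toric coordinate, and the induction assembles these into the full equality $\Phi_{(1)}=\Phi^n_\lambda$. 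Your diagonal path forfeits this inductive structure, and the replacement you propose (constancy of the eigenvalue functions $\lambda^{(j+1)}_1$ along the gradient flow of $\mathrm{Re}(f)$) is asserted rather than argued; it is not obvious, and in fact is what the stagewise degeneration is set up to establish. If you insist on a single segment you would need a direct computation showing that the collective Hamiltonians are preserved by the flow, which is essentially reproving the stagewise argument in one stroke.
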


\subsection{Lie theoretical Landau--Ginzburg mirrors of quadrics}\label{mirrorsymmofquadricss}

Rietsch \cite{Rietsch} constructed the Lie theoretical mirror of partial flag varieties. When $X$ is the quadric hypersurface $\mcal{Q}_n$, Pech--Rietsch--Williams in \cite{PechRietschWilliams} found a malleable expression and cluster structure of the Lie theoretical mirror, which will be recalled below.

For the odd dimensional quadric $X = \mcal{Q}_{2m-1}$, consider $\mathbb{P}(H^*(Q_{2m-1}, \C)^*) \simeq \CP^{2m-1}$ where $p_\bullet$'s are the homogeneous coordinates for $\mathbb{P}(H^*(Q_{2m-1}, \C)^*)$ corresponding to the Schubert basis. Then the Landau--Ginzburg mirror of the odd dimensional quadric $X$ consists of
\begin{enumerate}
\item The mirror complex manifold $\check{X}$ is the complement of $\scr{D}_0 + \dots + \scr{D}_m$ in $\CP^{2m-1}$ where
\begin{itemize}
\item $\scr{D}_0 := \{p_0 = 0\},$
\item $\scr{D}_j := \left\{ \delta_j:= \sum_{i=0}^j (-1)^i p_{j-i} \, p_{2m-1-j+i} = 0\right\} \mbox{for $j = 1, \dots, m-1$},$
\item $\scr{D}_m = \{p_{2m-1} =0\}$.
\end{itemize}
\item The superpotential $W_q \colon \check{X} \to \C$ is given by
\end{enumerate}
$$
W_q (\mathbf{p}) =  \frac{p_1}{p_0} + \sum_{j = 1}^{m-1} \frac{p_{j + 1} \, p_{2m-1 - j}}{\delta_j} + q \frac{p_1}{p_{2m-1}}.
$$

For the even dimensional quadric $X = \mcal{Q}_{2m-2}$, consider the dual quadric $\check{\mcal{Q}}_{2m-2}$, defined by
$$
p_{m-1}p_{m-1}^\prime - p_m p_{m-1} + \cdots + (-1)^{m-1} p_{m-2} p_0 = 0
$$
where $p_\bullet$'s and $p_{m-1}^\prime$ are the homogeneous coordinates for $\mathbb{P}(H^*(Q_{2m-2}, \C)^*)$ corresponding to the Schubert basis. Then the Landau--Ginzburg mirror consists of 
\begin{enumerate}
\item The mirror complex manifold $\check{X}$ is the complement of $\scr{D}_0 + \scr{D}_1 + \dots + \scr{D}_{m-2} + \scr{D}_{m-1} + \scr{D}_{m-1}^\prime$ in the dual quadric $\check{\mcal{Q}}_{2m-2}$ where
\begin{itemize}
\item $\scr{D}_0 := \{p_0 = 0\},$
\item $\scr{D}_j := \left\{ \delta_j:= \sum_{i=0}^j (-1)^i p_{j-i} \cdot p_{2m-2-j+i} = 0\right\} \mbox{for $j = 1, \dots, m-3$},$
\item $\scr{D}_{m-2} = \{p_{2m-2} =0\}$,
\item $\scr{D}_{m-1} = \{p_{m-1} =0\}$,
\item $\scr{D}_{m-1}^\prime = \{p^\prime_{m-1} =0\}$.
\end{itemize}
\item The superpotential $W_q \colon \check{X} \to \C$ is given by
$$
W_q (\mathbf{p}) =  \frac{p_1}{p_0} + \sum_{j = 1}^{m-3} \frac{p_{j + 1} \, p_{2m- 2 - j}}{\delta_j} + \frac{p_m}{p_{m-1}} + \frac{p_m}{p^\prime_{m-1}} + q \frac{p_1}{p_{2m-2}}.
$$
\end{enumerate}

\begin{theorem}[Theorem 2.1 and 3.2 in \cite{PechRietschWilliams}]
The above pair $(\check{X}, W_q)$ is the Landau--Ginzburg mirror of the quadric $\mcal{Q}_n$. In particular, the Jacobian ring of $(\check{X}, W_q)$ is isomorphic to the quantum cohomology ring of $\mcal{Q}_n$ $($with the quantum parameter
inverted$)$.
\end{theorem}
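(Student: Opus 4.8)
The plan is to deduce the statement from Rietsch's general Lie-theoretic mirror construction for a homogeneous space $G/P$, followed by an explicit change of coordinates adapted to the cominuscule combinatorics of the quadric. Recall that Rietsch's mirror for $G/P$ lives on an open subvariety of a Langlands-dual partial flag variety $P^\vee\backslash G^\vee$, with a superpotential $W_q = F_1 + \dots + F_n + q\,F_0$ written as a sum of generalized minors in the dual Plücker coordinates. Thus the assertion that the displayed pair $(\check X, W_q)$ is \emph{the} Landau--Ginzburg mirror is really the statement that, in the quadric case, Rietsch's mirror is isomorphic to the one written down in these homogeneous coordinates; and the Jacobian-ring assertion is a specialization of Rietsch's theorem that $\mathrm{Jac}(W) \cong QH^*(G/P)$ after inverting $q$. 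So the content to prove is the explicit rewriting plus (if one wants a self-contained argument) a direct Jacobian-ring computation.

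First I would specialize $G = \mathrm{SO}(n+2)$ with $P$ the maximal parabolic for which $G/P = \mcal{Q}_n = \mathrm{OG}(1,\C^{n+2})$, and identify $G^\vee/P^\vee$. The marked node of the $B_m$- (resp. $D_{m+1}$-) Dynkin diagram is cominuscule, and the associated poset of Schubert classes is a single chain $0 < 1 < \dots < 2m-1$ for $n = 2m-1$ odd, and a chain with one diamond in the middle, $0 < \dots < m-2 < \{m-1, m-1'\} < m < \dots < 2m-2$, for $n = 2m-2$ even. For $n$ odd this forces $G^\vee/P^\vee \simeq \CP^{n+1}$; for $n$ even it forces $G^\vee/P^\vee \simeq \check{\mcal{Q}}_n$, the dual quadric cut out by the displayed quadratic relation. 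This poset also dictates the anticanonical divisor $\scr{D}_0 + \dots$: the $\scr{D}_j$ correspond to the "rungs" of the chain, the extreme ones being coordinate hyperplanes and the intermediate $\delta_j$ being the signed subdiagonal sums of products of $p$'s, while in the even case the middle diamond produces the two divisors $\scr{D}_{m-1}, \scr{D}_{m-1}^\prime$.

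Next I would rewrite each generalized-minor summand $F_i$ of Rietsch's $W_q$ in the Schubert-dual homogeneous coordinates $p_\bullet$. Concretely, one pushes the Marsh--Rietsch formula for $W_q$ on the Plücker chart through the identification with $\P(H^*(\mcal{Q}_n,\C)^*)$; in the cominuscule case the relevant bookkeeping is of Chaput--Manivel--Perrin type, and each summand becomes a ratio $p_a/p_b$ (along the ends of the chain) or $p_a\, p_c / \delta_j$ (at an interior rung), the denominators $\delta_j$ being exactly the defining equations of the $\scr{D}_j$, which explains why $W_q$ is regular on $\check X$. Running this along the quadric poset yields precisely the stated formula: the chain part $\tfrac{p_1}{p_0} + \sum_j \tfrac{p_{j+1}\,p_\ast}{\delta_j}$, the quantum term $q\,\tfrac{p_1}{p_{\mathrm{top}}}$ coming from $F_0$, and, in the even case, the extra "fork" terms $\tfrac{p_m}{p_{m-1}} + \tfrac{p_m}{p_{m-1}^\prime}$ reflecting the two Schubert classes in the middle degree of $\mcal{Q}_{2m-2}$.

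Finally, for the Jacobian-ring isomorphism I would either invoke Rietsch's theorem directly — so that the explicit rewriting above automatically transports the isomorphism $\mathrm{Jac}(W_q) \cong QH^*(\mcal{Q}_n)[q^{\pm 1}]$ — or, for a self-contained proof, solve the critical-point equations $\partial_{p_i} W_q = 0$ in an affine chart of $\check X$ (setting $p_0 = 1$, and in the even case eliminating $p_{m-1}^\prime$ via the dual quadric), and check that the coordinate ring modulo the Jacobian ideal is a finite $\C[q^{\pm1}]$-algebra of rank equal to the sum of Betti numbers of $\mcal{Q}_n$ ($n+1$ for $n$ odd, $n+2$ for $n$ even), generated by the class of $p_1/p_0$ (the hyperplane class, plus a middle class when $n$ is even), satisfying the known relations of $QH^*(\mcal{Q}_n)$; generic semisimplicity of quantum cohomology of quadrics then pins down the isomorphism. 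The \textbf{main obstacle} I expect is the second-to-last step: tracking the explicit coordinate change from Rietsch's generalized minors to the Schubert-dual $p_\bullet$, in particular fixing all signs in the $\delta_j$ and correctly handling the two middle Schubert classes and the dual-quadric constraint in the even case. Once the explicit $W_q$ is in hand, the Jacobian-ring comparison is comparatively mechanical — a finite elimination together with a rank count against the standard presentation of $QH^*(\mcal{Q}_n)$.
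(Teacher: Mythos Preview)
The paper does not prove this theorem at all: it is stated as a quotation of Theorem~2.1 and Theorem~3.2 of Pech--Rietsch--Williams, with no argument given, and is used only as an input (via Proposition~\ref{cor_criticalvalues}) to pin down the coefficient $\kappa$ in the disk potential. So there is no ``paper's own proof'' to compare against; your proposal is really a sketch of the original PRW argument rather than of anything in this manuscript.

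That said, your outline is a reasonable summary of what actually happens in PRW: specialize Rietsch's Lie-theoretic mirror for $G/P$ to the cominuscule quadric case, identify the Langlands-dual partial flag variety as $\CP^{2m-1}$ (odd case) or the dual quadric $\check{\mcal{Q}}_{2m-2}$ (even case), rewrite the generalized-minor superpotential in Schubert-dual homogeneous coordinates to obtain the displayed $W_q$, and then either invoke Rietsch's general $\mathrm{Jac}(W_q)\cong QH^*(G/P)_{(q)}$ or verify the Jacobian presentation directly against the known quantum cohomology of quadrics. Your identification of the main technical point --- tracking the coordinate change and signs in the $\delta_j$, and handling the middle diamond in the even case --- is accurate. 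One small correction: the cominuscule node for $\mcal{Q}_n$ with $n=2m-1$ sits on the $B_m$ diagram, but the Langlands dual group is of type $C_m$, so $G^\vee/P^\vee$ is a priori a Lagrangian Grassmannian quotient; the identification with $\CP^{2m-1}$ goes through the coincidence $\mathrm{Sp}(2m)/P^\vee \simeq \CP^{2m-1}$ for the relevant maximal parabolic, which you should make explicit rather than asserting ``this forces $G^\vee/P^\vee\simeq\CP^{n+1}$''.
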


We then can compute the critical values of the superpotentials, which are mirror to the eigenvalues of the quantum cup product with $c_1(TX) $ on the quantum cohomology. The following result will be used for computing counting invariants later on.

\begin{proposition}[Proposition 2.3 and 3.13 in \cite{PechRietschWilliams}]\label{cor_criticalvalues}
The critical values of the superpotential $W_q$ mirror to $\mcal{Q}_n$ are $\left\{ n \cdot \zeta \colon \zeta^n = 4q \right\} \cup \{0\}$. 
\end{proposition}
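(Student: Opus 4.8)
The plan is to compute the critical points of $W_q$ directly in each parity case, using the cluster (or explicitly affine) coordinates in which the mirror potential simplifies to a Laurent polynomial, and then evaluate $W_q$ at those points. First I would treat the odd case $X = \mcal{Q}_{2m-1}$. Passing to the affine chart $p_0 = 1$ and using the functions $\delta_1, \dots, \delta_{m-1}$ together with the remaining $p_\bullet$'s as coordinates on $\check X$ (this is where one invokes the cluster-chart description from \cite{PechRietschWilliams}), the superpotential becomes a Laurent polynomial in $m$ variables. I would then write down the system $\partial W_q = 0$: differentiating the sum $\sum_j p_{j+1}p_{2m-1-j}/\delta_j$ and the boundary terms $p_1 + q\,p_1/p_{2m-1}$ produces a system whose equations, after clearing denominators, force a chain of monomial relations among the coordinates. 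The key observation I expect to exploit is that the Euler-type relation $\sum z_i \partial_{z_i} W_q = 0$ (each monomial of $W_q$ has total degree zero in suitable weights) is automatically satisfied, so the critical value can be read off a single cleverly chosen combination; concretely, the gradient equations should pin down each ratio $p_{j+1}p_{2m-1-j}/\delta_j$ to equal the common value $\zeta$, and the boundary term $q\,p_1/p_{2m-1}$ forces $\zeta^n = 4q$ after tracking the combinatorial multiplicities coming from how $p_1$ and $p_{2m-1}$ appear in several $\delta_j$'s simultaneously.

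Next I would handle the even case $X = \mcal{Q}_{2m-2}$, which is almost identical except that $\check X$ is an open subset of the dual quadric $\check{\mcal{Q}}_{2m-2}$ rather than of projective space, and the ``middle'' term $p_m/p_{m-1} + p_m/p'_{m-1}$ replaces a single summand. Here I would use the quadric relation $p_{m-1}p'_{m-1} = p_m p_{m-2} - \dots$ to eliminate $p'_{m-1}$, reducing again to a Laurent system in $m-1$ free coordinates; the two terms $p_m/p_{m-1}$ and $p_m/p'_{m-1}$ contribute the extra multiplicity that makes the exponent in $\zeta^n = 4q$ come out to $n = 2m-2$ with the factor $4$ (rather than, say, $2$) — the two symmetric ``legs'' at the node of the quadric are precisely what doubles the naive count. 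In both parities I would separately account for the locus where the would-be critical equations degenerate, i.e. where some $\delta_j$ or boundary coordinate is forced to zero along a limiting critical point; this is what produces the extra critical value $0$ in the statement, and I would verify it corresponds to the non-semisimple (nilpotent) part of quantum multiplication by $c_1$.

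The main obstacle I anticipate is bookkeeping the combinatorics of which coordinates $p_i$ occur in which linear forms $\delta_j$, since $p_1$ and $p_{2m-1-j}$-type coordinates appear in several $\delta_j$ at once; getting the exponent $n$ and the constant $4$ exactly right in $\zeta^n = 4q$ requires carefully tracking these overlaps rather than hand-waving a ``product telescopes'' argument. A secondary subtlety is justifying that the chosen cluster chart captures \emph{all} critical points of $W_q$ and not merely those in a dense torus — one must check that no critical point escapes to the boundary divisors $\scr D_j$, which follows because $W_q$ blows up near each $\scr D_j$, but this properness-type argument should be stated explicitly. Once these points are settled, evaluating $W_q$ at the solutions gives the value $n\zeta$ with $\zeta^n = 4q$, together with the exceptional value $0$, matching the asserted spectrum of $c_1 \star (-)$.
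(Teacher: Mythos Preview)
The paper does not supply a proof of this proposition at all: it is stated as a citation (``Proposition 2.3 and 3.13 in \cite{PechRietschWilliams}'') and used as a black box in Section~\ref{section_computingdiscpotfuncs}. So there is no ``paper's own proof'' to compare your proposal against. What you have written is an attempt to fill in a result the author deliberately imported from the literature.

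That said, your outline is broadly reasonable and close in spirit to how one would actually carry this out. Indeed, the paper itself performs essentially the computation you describe, in a specific Laurent chart, in the proof of Proposition~\ref{proposition_openGWin}: it writes the potential in the variables $z_1,\dots,z_n$ (the GZ chart, equivalently Przhiyalkovski\u{\i}'s chart via \eqref{equ_coordchaprzgz}), solves the system \eqref{equ_equequ0}--\eqref{equ_equequ2}, and reads off the critical value $n/z_n$ with $z_n^{-n}=4$. So the ``chain of monomial relations'' you anticipate is exactly the telescoping in \eqref{equ_equequ0}, and the factor $4$ arises concretely from $1/z_1+2+z_1=4$ at $z_1=1$. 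Your plan to work in the $p_\bullet/\delta_j$ coordinates of \cite{PechRietschWilliams} is a parallel but combinatorially heavier route to the same answer.

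One point in your proposal deserves caution. You say you will show that no critical point escapes to the boundary because ``$W_q$ blows up near each $\scr{D}_j$'', but this is not quite the right mechanism for the critical value $0$: that value \emph{does} correspond to honest critical points of $W_q$ on $\check{X}$ (in the odd case it is visible already in the torus chart once one allows the branch $z_1=-1$; in the even case it lives in another cluster chart and reflects the two middle Schubert classes). So the $0$ is not a ``degenerate limit'' but a genuine critical value, and you should locate it directly rather than via a properness argument.
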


\section{Lagrangian Floer theory and monotone Fukaya category}

The goal of this section is to recall the definition of disk potential functions and the structural result on the monotone Fukaya category. Also, using the \emph{gradient holomorphic disks}, we shall prove that the Gelfand--Zeitlin torus fiber at the ``center" of the Gelfand--Zeitlin polytope $\Delta^n_\lambda$ is monotone.

\subsection{Monotone Lagrangian submanifolds}

A symplectic manifold $(X, \omega)$ is called \emph{monotone} if $[\omega] = \eta \cdot c_1 (TX)$ for some positive number $\eta$ in $H^2(X; \R)$. A Lagrangian submanifold $L$ of $X$ has two group homomorphisms$\colon$
\begin{itemize}
\item $I_\omega \colon \pi_2(X,L) \to \R$ is given by the symplectic area,
\item $I_\mu \colon \pi_2(X,L) \to \Z$ is given by the Maslov index.
\end{itemize}
A Lagrangian submanifold $L$ is said to be \emph{monotone} if there exists a positive number $\upsilon$ such that $I_\omega (\beta) = \upsilon \cdot I_\mu (\beta)$ for every $\beta \in \pi_2(X,L)$.
A monotone Lagrangian submanifold can exist only in a monotone symplectic manifold. Moreover, two monotonicity constants $\eta$ and $\upsilon$ are related by $\eta = 2 \upsilon$ unless $X$ is symplectically aspherical, that is, $c_1(TX)([\alpha]) = 0$ for every $\alpha \in \pi_2(X)$.

Oh in \cite{OhMonotone, OhMonotone2} introduced the notion of monotonicity of Lagrangian submanifolds and constructed Floer cohomology of monotone Lagrangians. Under the monotonicity assumption, Floer homology on the pearl complex of $L$ which was firstly employed in \cite{OhPearl} was constructed by Biran--Cornea \cite{BiranCornea} in details.

\begin{remark}
Lagrangian Floer theory and their deformations were constructed beyond the monotone setting by Fukaya--Oh--Ohta--Ono \cite{FOOObook1, FOOOMorse, FukayaCyclic, FOOOToric1} for simplicial models, Morse models, and de Rham models, see also Charest--Woodward \cite{CharestWoodward} for rational symplectic manifolds. \end{remark}

Let $X$ be a monotone closed symplectic manifold. For a choice of compatible almost complex structure $J$ on $X$, we denote by $\mcal{M}_1(X, L, \beta; J)$ the moduli space of stable $J$-holomorphic maps from a bordered genus $0$ Riemann surface in $\beta \in \pi_2(X,L)$ with one boundary marking point. 

Suppose that a Lagrangian submanifold $L$ is monotone, orientable, and (relative) spin. Also, assume that the monotonicity constant $\upsilon$ is positive. To orient the moduli spaces and determine signs for the underlying $A_\infty$ structure maps, we make a choice for orientation and (relative) spin structure of $L$. A generic compatible almost complex structure $J$ makes every disk class of Maslov index two Fredholm regular.  Moreover, the monotonicity condition ensures that it does not bound any non-constant holomorphic disks of Maslov index zero. Consequently, the moduli space $\mcal{M}_1(X, L, \beta; J)$ for each class $\beta$ of Maslov index two becomes a manifold of dimension $\dim_\R L$ without boundary. The counting $n_\beta(L, J)$ is defined by the degree of the evaluation map at the marking point $z_0$. 
\begin{equation}\label{equ_evaluationev0}
\mathrm{ev}_{0} \colon \mcal{M}_1(X, L, \beta; J) \to L \quad \mbox{$\varphi \mapsto \varphi(z_0)$}.
\end{equation}
This number $n_\beta(L, J)$ is called the \emph{open Gromov--Witten invariant} of $\beta$.

In this circumstance, this counting is \emph{invariant} under a choice of generic compatible almost complex structures. A generic path from $J$ to $J^\prime$ in the space of compatible almost complex structures produces an oriented cobordism between $\mcal{M}_1(X, L,\beta;J)$ and $\mcal{M}_1(X, L,\beta;J^\prime)$. The monotonicity condition prohibits bubbles a priori through the cobordism. This argument gives the following lemma, see \cite{EliashbergPolterovich} for instance. 

\begin{lemma}\label{lemma_invarianceofcounting}
Assume that $L$ is a monotone Lagrangian submanifold. For two generic compatible almost complex structures $J$ and $J^\prime$, the counting numbers $n_\beta(L,J)$ and $n_\beta(L,J^\prime)$ are equal. 
\end{lemma}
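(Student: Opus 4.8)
The statement to prove is Lemma~\ref{lemma_invarianceofcounting}: invariance of the open Gromov--Witten counts $n_\beta(L,J)$ under a change of generic compatible almost complex structure, for a monotone Lagrangian $L$ and a class $\beta$ of Maslov index two.

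\medskip

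\textbf{Proof plan.} The plan is to run the standard cobordism argument for one-dimensional moduli spaces. First I would fix the monotone, orientable, relatively spin Lagrangian $L$ together with the chosen relative spin structure, and fix two generic $\omega$-compatible almost complex structures $J_0$ and $J_1$ for which every Maslov index two class is Fredholm regular and the evaluation map $\mathrm{ev}_0$ of \eqref{equ_evaluationev0} is transverse to a point. Then I would choose a generic smooth path $\{J_s\}_{s\in[0,1]}$ joining them inside the space of compatible almost complex structures and form the parametrized moduli space $\mcal{M}_1(X,L,\beta;\{J_s\}) := \bigcup_{s\in[0,1]} \{s\}\times \mcal{M}_1(X,L,\beta;J_s)$. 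For a generic path this is a smooth compact one-manifold with boundary, and its boundary consists precisely of the fibers over the endpoints, namely $\{0\}\times\mcal{M}_1(X,L,\beta;J_0)$ and $\{1\}\times\mcal{M}_1(X,L,\beta;J_1)$. The total evaluation map $\mathrm{ev}_0 \colon \mcal{M}_1(X,L,\beta;\{J_s\})\to L$ then furnishes, after restricting to the preimage of a regular value $p\in L$, an oriented cobordism between the finite fiber $\mathrm{ev}_0^{-1}(p)$ at $J_0$ and the one at $J_1$; counting signed points on the boundary of a compact oriented one-manifold gives zero, so the signed counts at the two endpoints agree, i.e.\ $n_\beta(L,J_0)=n_\beta(L,J_1)$.

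\medskip

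\textbf{Where monotonicity enters.} The two places the argument can fail are: (i) the parametrized moduli space is noncompact because a sequence of disks in class $\beta$ degenerates (Gromov compactness allows sphere bubbles in the interior and disk bubbles on the boundary), and (ii) the boundary of the compactification could contain strata other than the endpoint fibers. Here monotonicity of $L$ does the work. Since $I_\omega(\gamma)=\upsilon\cdot I_\mu(\gamma)$ with $\upsilon>0$, a nonconstant holomorphic disk has Maslov index $\ge 2$, and similarly a nonconstant holomorphic sphere has Chern number $\ge 1$; hence any nontrivial bubbling in class $\beta$ with $I_\mu(\beta)=2$ would force the surviving main component to have Maslov index $\le 0$, which by monotonicity forces it to be constant, and the total area to drop below $I_\omega(\beta)$, a contradiction. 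Thus no bubbling occurs along the generic path, the parametrized moduli space is already compact, and its only boundary is over $s\in\{0,1\}$. I would also note the minor point that the generic path misses the walls where sphere bubbles of Maslov index $0$ could appear (there are none by monotonicity anyway, but the codimension count is the usual one), so transversality along the path holds away from a set that does not affect the one-dimensional count.

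\medskip

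\textbf{Main obstacle.} The routine parts — Fredholm theory, Sard--Smale to get the generic path, Gromov compactness, and the orientation/relative spin bookkeeping making the cobordism oriented — are all standard and I would cite Biran--Cornea \cite{BiranCornea} (and \cite{EliashbergPolterovich}) rather than redo them. The only genuinely substantive point is verifying that the relevant moduli spaces along the path are exactly one-dimensional and cut out transversally, i.e.\ confirming the index computation $\dim \mcal{M}_1(X,L,\beta;J_s) = \dim_\R L + I_\mu(\beta) - 2 = \dim_\R L$ so that the parametrized space has dimension $\dim_\R L + 1$ and $\mathrm{ev}_0^{-1}(p)$ is a $1$-manifold, together with the ruling-out of bubbling described above; both follow immediately from $I_\mu(\beta)=2$ and positivity of $\upsilon$. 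With these in hand the conclusion $n_\beta(L,J)=n_\beta(L,J')$ is immediate.
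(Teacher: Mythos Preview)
Your proposal is correct and follows exactly the approach the paper indicates: the paper does not give a full proof but only the two-sentence sketch ``A generic path from $J$ to $J'$ \dots\ produces an oriented cobordism between $\mcal{M}_1(X,L,\beta;J)$ and $\mcal{M}_1(X,L,\beta;J')$. The monotonicity condition prohibits bubbles a priori through the cobordism,'' with a reference to \cite{EliashbergPolterovich}. Your write-up simply expands this sketch with the standard Fredholm/Gromov-compactness details, so the approaches coincide.
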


For the sake of notational simplicity, we set $\mcal{M}_1(\beta) := \mcal{M}_1(X, L,\beta;J)$ and $n_\beta = n_\beta(L,J)$ (with generic $J$).

\subsection{Disk potentials and the monotone Fukaya category}

In general, a Lagrangian submanifold $L$ may bound a holomorphic disk, which causes an obstruction on defining Floer cohomology. Assume that $L$ is a monotone Lagrangian torus of a compact non-symplectically aspherical manifold $X$. In this case, the obstruction does not cause any trouble for constructing Floer cohomology although it may bound a holomorphic disk. As we have just observed, the moduli space $\mcal{M}_1(\beta)$ represents a cycle and the push-forward of the evaluation map in~\eqref{equ_evaluationev0} for any class $\beta$ is proportional to the fundamental cycle of $L$. It in turn implies that Floer cohomology is well-defined (i.e. the composition of consecutive differentials vanishes) as the fundamental cycle of $L$ is a unit of the underlying $A_\infty$-algebra associated to $L$. 

 To incorporate deformations of Floer theory, we adorn the trivial line bundle $\scr{L}$ over a Lagrangian torus $L$ and take a flat non-unitary $\C^*$-connection $\nabla$ on the bundle as in \cite{Chononunitrary}. 
Choose a set of oriented loops $\{ \vartheta_1, \dots, \vartheta_n \}$ that forms a basis for $\pi_1(L) \simeq \Z^n$ as a $\Z$-module. Two connections $\nabla$ and $\nabla^\prime$ are called \emph{gauge equivalent} if $\mathrm{hol}_\nabla (\vartheta_j) =  \mathrm{hol}_{\nabla^\prime} (\vartheta_j)$ for every $j = 1, \dots, n$. We denote by $\mcal{M}(\scr{L})$ the moduli space of flat $\C^*$-connections modulo gauge equivalence. 
The above choice of the oriented loops leads to a coordinate system on $\mcal{M}(\scr{L})$ consisting of the holonomy variables
\begin{equation}\label{equ_zjvariable}
z_j := \mathrm{hol}_\nabla (\vartheta_j) \in \C^* \quad \mbox{ ($j = 1, \dots, n$)}.
\end{equation}
In other words, it determines an identification between $\mcal{M}(\scr{L})$ and an algebraic torus $(\C^*)^n$. From the point of view of SYZ mirror symmetry, the moduli space $\mcal{M}(\scr{L})$ of $\C^*$-flat connections serves as a mirror complex chart, see \cite{AurouxTdual}.

The obstruction of Lagrangian Floer theory is expressed as
\begin{equation}\label{equ_obstrLFT}
\frak{m}^\nabla_0(1) = \sum_{\beta} \mathrm{hol}_\nabla (\partial \beta) \cdot (\mathrm{ev}_0)_* [\mcal{M}_1(\beta)] \cdot T^{\omega(\beta)}
= \sum_{\beta} n_\beta \cdot \mathrm{hol}_\nabla (\partial \beta) \cdot [L] \cdot T^{\omega(\beta)}.
\end{equation}
\begin{remark}
In~\eqref{equ_obstrLFT}, the variable $T$ is the formal parameter recording symplectic area of $J$-holomorphic curves.
More precisely, we need to work over the Novikov ring given by
$$
\Lambda_0 =\left\{\sum_{i=0}^\infty a_i T^{A_i} \mid A_i \geq 0 \textrm{ increase to } +\infty, a_i \in \C \right\}
$$ 
in order to define Lagrangian Floer theory for general Lagrangian submanifolds. 
As dealing with a monotone Lagrangian submanifold in a closed symplectic manifold, we may work over the field of complex numbers by replacing $T$ with $e^{-1}$ for instance. That is, there is no convergence issues.
\end{remark}

Notice the Maslov index of $\beta$ should be two for $n_\beta$ being non-zero. Since $L$ is monotone, $\omega(\beta)$ is equal to $2 \upsilon$ for every $\beta$ with $\mu(\beta) = 2$. It means that the valuation of each term in $\frak{m}^\nabla_0(1)$ is identical. Notice that $\mathrm{hol}_\nabla (\partial \beta)$ can be expressed as a Laurent monomial in terms of the variables~\eqref{equ_zjvariable}.

\begin{definition}\label{def_diskpotentfunct}
The \emph{disk potential function} of $L$ is defined by
$$
W_L \colon \mcal{M}(\scr{L}) \simeq (\C^*)^n \to \C \quad [\nabla] \simeq (z_1, \dots, z_n) \mapsto \sum_{\beta; \mu(\beta) = 2}  n_\beta \cdot z^{\partial \beta}.
$$
\end{definition}

The derivative of $W_L$ determines the Floer differentials of the $1$-cochains $\{  \vartheta^\vee_1, \dots, \vartheta^\vee_n \}$. In particular, the Floer boundary map deformed by $\nabla$ corresponding to a critical point vanishes on the $1$-cochains. Furthermore, the $A_\infty$-relation yields that the differential of every cochain is $0$ so that the Floer cohomology is isomorphic to the cohomology of $T^n$. Such an object $(L, \nabla)$ becomes a \emph{non-zero} object in the monotone Fukaya category. More precisely, $(L, \nabla)$ is meant to be the Lagrangian torus $L$ together with the class represented by a flat $\C^*$-connection $\nabla$ and a choice of spin structure and orientation. For more details, the reader is referred to \cite{FOOOToric1, AurouxTdual} for instance.

We now recall the structural result on the monotone Fukaya category proven by Sheridan \cite{SheridanFano}. The result will be employed to compute some counting invariants later on. The values of the disk potential of $(L, \nabla)$ provide a decomposition on the Fukaya category. Namely, the monotone Fukaya category is a direct summation of $\mcal{F}uk(X)_w$ where $\mcal{F}uk(X)_w$ is the full subcategory whose objects are $(L, \nabla)$ with the value $W_L (\nabla) = w$.

Regarding the quantum cohomology $QH(X, \omega)$ as the vector space over $\C$, the first Chern class $c_1 (TX)$ acts on the vector space $QH(X, \omega)$ via the quantum cup product. As the Fukaya category communicates with the closed string mirror symmetry (via closed-open maps or open-closed maps), the critical values making $\mcal{F}uk(X)_w$ non-trivial are determined by the eigenvalues of the quantum cohomology $QH(X, \omega)$. The result that we will employ is stated as below.

\begin{theorem}[Corollary 1.5 in \cite{SheridanFano}]\label{theorem_corsheridan}
The $w$-summand $\mcal{F}uk_w(X)$ of the monotone Fukaya category is trivial if $w$ is not an eigenvalue of the quantum cup product $c_1 (TX) \star$ in $QH(X)$.
\end{theorem}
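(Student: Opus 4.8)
The plan is to exploit the closed--open string map, which transports the quantum cup product by $c_1(TX)$ into Floer cohomology, where it is computed by the disk potential. Fix an object $(L,\nabla)$ of the monotone Fukaya category and consider the closed--open map
\[
\mcal{CO}^0_{(L,\nabla)} \colon QH(X) \longrightarrow HF^*\big((L,\nabla),(L,\nabla)\big),
\]
a unital ring homomorphism; this is part of the structural package of the monotone Fukaya category, being the degree-zero component of the closed--open map $\mcal{CO}\colon QH(X)\to HH^*(\mcal{F}uk(X))$, a unital algebra homomorphism into Hochschild cohomology.

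First I would establish the key identity
\[
\mcal{CO}^0_{(L,\nabla)}\big(c_1(TX)\big) = W_L(\nabla)\cdot e_{(L,\nabla)},
\]
where $e_{(L,\nabla)}$ is the unit of $HF^*((L,\nabla),(L,\nabla))$. The point is that one may choose a cycle Poincar\'e dual to $c_1(TX)$ so that $\mcal{CO}^0_{(L,\nabla)}(c_1(TX))$ counts Maslov index two disks carrying one boundary and one interior marked point, with the interior point constrained to that cycle; since a Maslov two disk meets such a cycle with total multiplicity one, this count reproduces $\sum_{\mu(\beta)=2} n_\beta\, z^{\partial\beta}$, that is $W_L(\nabla)$, weighted by the unit. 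Making this precise uses the divisor and dimension arguments and the gluing underlying the closed--open map in the monotone setting, following Auroux and Fukaya--Oh--Ohta--Ono and carried out in \cite{SheridanFano}.

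Next, suppose $w$ is not an eigenvalue of $c_1(TX)\star$ acting on $QH(X)$. Since $QH(X)$ is a finite-dimensional commutative algebra acting on itself by multiplication, injectivity of $c_1(TX)\star - w\cdot\mathrm{id}$ forces $c_1(TX) - w\cdot 1$ to be an invertible element of $QH(X)$. For any object $(L,\nabla)$ of $\mcal{F}uk_w(X)$ we have $W_L(\nabla)=w$, so applying the unital ring homomorphism $\mcal{CO}^0_{(L,\nabla)}$ and the key identity gives
\[
\mcal{CO}^0_{(L,\nabla)}\big(c_1(TX) - w\cdot 1\big) = W_L(\nabla)\cdot e_{(L,\nabla)} - w\cdot e_{(L,\nabla)} = 0.
\]
On the other hand the left-hand side is the image of a unit under a unital ring homomorphism, hence a unit of $HF^*((L,\nabla),(L,\nabla))$. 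A ring in which $0$ is invertible is the zero ring, so $HF^*((L,\nabla),(L,\nabla))=0$; equivalently $(L,\nabla)$ is a zero object. Therefore every object of $\mcal{F}uk_w(X)$ is a zero object, and the summand $\mcal{F}uk_w(X)$ is trivial.

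The hard part will be the key identity $\mcal{CO}^0_{(L,\nabla)}(c_1(TX)) = W_L(\nabla)\cdot e_{(L,\nabla)}$: one must fix a geometric representative of $c_1(TX)$ adapted to the chosen almost complex structure and to $L$, control transversality and compactness of the moduli spaces of Maslov index two disks with one interior and one boundary marked point, and match orientations so that the signed count agrees with the disk potential term by term. A secondary point is to verify that $\mcal{F}uk(X)=\bigoplus_w \mcal{F}uk_w(X)$ is a genuine decomposition of split-closed $A_\infty$-categories, obtained by transporting the orthogonal idempotents of the decomposition $QH(X)=\prod_w QH(X)_w$ into generalized eigenspaces through $\mcal{CO}$ to idempotent natural transformations of the identity functor; the theorem is then precisely the assertion that the summand attached to a non-eigenvalue contains no non-zero objects.
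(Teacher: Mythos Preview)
The paper does not prove this statement; it is quoted as Corollary~1.5 of \cite{SheridanFano} and used as a black box. Your outline is the standard argument and is essentially the one Sheridan gives: the closed--open map $\mcal{CO}^0$ is a unital ring homomorphism, it sends $c_1(TX)$ to $W_L(\nabla)\cdot e_{(L,\nabla)}$, and then invertibility of $c_1(TX)-w$ in $QH(X)$ forces $HF^*((L,\nabla),(L,\nabla))=0$ whenever $W_L(\nabla)=w$. So your proposal is correct and matches the source the paper cites; there is simply nothing in the present paper to compare it against.
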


\subsection{Maslov index formula for gradient holomorphic disks}

We recall the Maslov index formula for gradient holomorphic disks derived in \cite{ChoKimMONO}. Consider a symplectic manifold $X$ equipped with a Hamiltonian ${S}^1$-action. Fix a moment map $H$ of the circle action. Suppose that we have an ${S}^1$-invariant Lagrangian submanifold $L$ contained in a level set of $H$.
Now, we choose the following data$\colon$
\begin{enumerate}
\item an ${S}^1$-invariant compatible almost complex structure $J$ and
\item a free ${S}^1$-orbit $\vartheta$ in $L$.
\end{enumerate}

The moment map $H$ generates the gradient vector field $\nabla H = J X_H$ using the Riemannian metric $g_J$ induced by $\omega$ and $J$ where $X_H$ is the Hamiltonian vector field generated by $H$. Assume that we have the integral curve $\gamma$ of $\nabla H$ starting at $p \in \vartheta$ which converges to a fixed point. The collection of $S^1$-orbits of $\gamma$ forms a $J$-holomorphic disk whose boundary is $\vartheta$. We call such a holomorphic disk a \emph{gradient holomorphic disk} of $\vartheta$, which is an analogue object of holomorphic spheres in Karshon \cite{KarshonPHF}.

\begin{figure}[h]
	\scalebox{0.85}{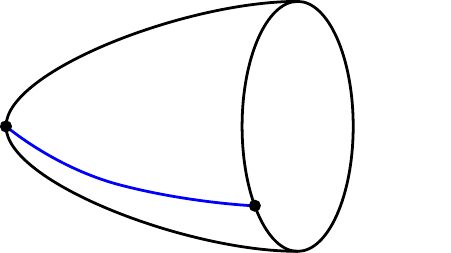}
	\caption{\label{Fig_diagram_AF23467} Gradient holomorphic disk}	
\end{figure}

For a connected Lie group $G$ acting on a manifold $X$, we call the action \emph{semi-free} if the action is free on the complement of the set of fixed points. In the following circumstance, the Maslov index of a gradient holomorphic disk  of $\vartheta$ can be calculated by the codimension of the maximal fixed component of $H$. 

\begin{proposition}[Corollary 3.8 in \cite{ChoKimMONO}]\label{corollary_Maslov_index_formula}
Let $(X,\omega)$ be a $2n$-dimensional symplectic manifold equipped with a Hamiltonian ${S}^1$-action. Choose a Hamiltonian function $H$ of the $S^1$-action. Let $L$ be an ${S}^1$-invariant Lagrangian submanifold of $X$ lying on some level set of a moment map $H$. Suppose that a class $\beta \in \pi_2(X, L)$ is represented by a gradient holomorphic disk of a free ${S}^1$-orbit $\vartheta$ in $L$. If the action is semi-free near the fixed point of the gradient disk and $H$ attains the maximum at the fixed point, then the Maslov index $\mu(\beta)$ is the real codimension of the maximal fixed component.	
\end{proposition}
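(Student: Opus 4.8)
The plan is to read off the Maslov index of the gradient holomorphic disk from the $S^1$-equivariant structure it carries and then to specialize the resulting weight count to the semi-free, maximal situation. Write $q$ for the fixed point at the center of the gradient disk, let $F$ be the maximal fixed component containing $q$, and let $u \colon (D, \partial D) \to (X, L)$ represent $\beta$, normalized so that $u(\partial D) = \vartheta$. The first thing I would record is that $u$ is $S^1$-equivariant: since the disk is obtained by sweeping a single gradient trajectory $\gamma$ of $\nabla H = JX_H$ by the circle action, one has $u(e^{\sqrt{-1}\theta} z) = e^{\sqrt{-1}\theta}\cdot u(z)$, where $S^1$ acts on $D$ by rotation and on $X$ by the Hamiltonian action. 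In particular the center maps to $q$ and the angular direction of $D$ matches the generator $X_H$.

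Next I would analyze the isotropy representation at $q$. Since $J$ is $S^1$-invariant, $T_q X$ splits $J$-complex-linearly and $S^1$-invariantly as $T_qF \oplus N_q$, and $N_q$ decomposes into weight spaces for the linearized action. Here the two hypotheses enter. Semi-freeness near $q$ forces every nonzero weight to be $\pm 1$, so $N_q$ is a sum of weight-$(\pm 1)$ complex lines. The assumption that $H$ attains its maximum at $q$ then pins down the sign: in Darboux coordinates the moment map reads $H = H(q) - \tfrac{1}{2}\sum_i w_i |z_i|^2$ on $N_q$, so maximality forces all normal weights to equal $+1$. Thus $N_q$ is a sum of $k$ weight-one complex lines with $2k = \operatorname{codim}_\R F$, while $T_qF$ is exactly the weight-zero part.

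The core step is to convert this pointwise weight data into the Maslov index by decomposing the bundle pair $(u^*TX,\,(u|_{\partial D})^*TL)$ over $(D,\partial D)$. Using equivariance, $u^*TX$ is an $S^1$-equivariant complex bundle over $D$ for the rotation action, free away from the center, so it splits into isotypic summands; the weight-zero summand is $u^*TF$ and the complementary summand is the weight-one normal part. Because $H$ is constant along $F$, the gradient trajectory keeps its $F$-coordinate fixed and contracts to $q$ through the normal directions, so the disk is constant in the $F$-directions: the weight-zero summand is the trivial pair $D\times T_qF$ with an $S^1$-invariant, hence non-winding, Lagrangian boundary, contributing Maslov index $0$. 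Each weight-one normal complex line reduces to the prototype of the unit disk in $\C$ with boundary the unit circle under the rotation whose moment map is $-\tfrac12|z|^2$, which is precisely the standard Maslov-index-two disk, so each normal line contributes $2$. Summing gives $\mu(\beta) = 0 + 2k = \operatorname{codim}_\R F$.

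I expect the main obstacle to be the rigorous decomposition of the bundle pair over the \emph{entire} disk rather than merely near $q$: away from $F$ there is no canonical $TF\oplus N$ splitting, so the splitting must be produced from the global $S^1$-equivariant structure of $u^*TX$ (descending it along the radial quotient of $D$), and one must verify that the Lagrangian boundary subbundle $(u|_{\partial D})^*TL$ is compatible with this decomposition so that the Maslov indices add. Closely tied to this is the bookkeeping of orientations and signs, which I would fix once and for all by calibrating against the $\C$-prototype; making the sign convention for the weights consistent with the Morse behavior of $H$ is exactly where care is needed, and it is precisely where the maximality hypothesis is consumed.
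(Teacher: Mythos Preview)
The paper does not give its own proof of this statement: it is quoted verbatim as Corollary~3.8 of \cite{ChoKimMONO} and then immediately \emph{applied} to the Gelfand--Zeitlin setting. So there is no in-paper argument to compare against; the correct reference point is the proof in \cite{ChoKimMONO}.

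That said, your outline is sound and is in the same circle of ideas as the standard weight computations for gradient spheres and disks (cf.\ Karshon's Chern-number formula for gradient spheres, which the paper also cites). The logical spine---equivariance of $u$, semi-freeness forcing normal weights $\pm 1$, maximality of $H$ forcing them all to be $+1$, and additivity of the Maslov index under the resulting splitting of the bundle pair---is exactly what one expects, and your identification of each weight-one normal line with the model disk $(\C, S^1)$ of Maslov index two is the right prototype.

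Two small points are worth tightening. First, the phrase ``the disk is constant in the $F$-directions'' is not literally true globally (there is no $TF\oplus N$ splitting away from $F$), and you already flag this; the honest statement is only that the isotypic decomposition of the \emph{equivariant} bundle $u^*TX$ over $D$ has a weight-zero summand which, restricted to the center, is $T_qF$, and a positive-weight summand of complex rank $k$. The Maslov contribution of the weight-zero summand vanishes because the $S^1$-invariance of $L$ forces the boundary Lagrangian loop there to be constant (no winding). Second, the compatibility of $(u|_{\partial D})^*TL$ with the isotypic splitting is exactly the point that needs an argument; it follows from the fact that $L$ is $S^1$-invariant and sits in a level set, so at a boundary point $TL$ decomposes into the $X_H$-direction (which pairs with the radial direction to span one normal line) and directions in $\ker dH \cap TL$, matching the normal/tangential split. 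Once those two points are written carefully, your computation goes through.
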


We now apply Proposition~\ref{corollary_Maslov_index_formula} to prove that the GZ fiber at the ``center" of the GZ polytope $\Delta^n_\lambda$ in~\eqref{equ_GZpolytope} is a monotone Lagrangian torus. Let us choose $\lambda_1 = n$ in the tuple $\lambda$ in~\eqref{equ_givenlambda}. The following point is said to be the \emph{center} of $\Delta_\lambda^n \colon$
$$
\mathbf{u}_\mathrm{0} = (u_1 = 0, u_2 = 1, \dots, u_n = n-1).
$$ 
For our convenience, let us label the inverse image of certain faces of $\Delta_\lambda^n$.
\begin{enumerate}
\item $F_0$ is the inverse image of the facet $f_0$ supported by $u_2 + u_1 = 0$. 
\item $F_i$ is the inverse image of the facet $f_i$ supported by $u_{i+1} - u_{i} = 0$ for $i = 1, \dots, n$.
\item $F_{01}$ is the inverse image of the intersection $f_{01}$ of $f_0$ and $f_1$.
\end{enumerate}

\begin{proposition}\label{cor_monotonetorus}
The Gelfand--Zeitlin fiber $(\Phi^n_\lambda)^{-1}(\mathbf{u}_\mathrm{0})$ is a monotone Lagrangian torus.
\end{proposition}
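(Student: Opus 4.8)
The plan is to exhibit, for the torus fiber $L := (\Phi^n_\lambda)^{-1}(\mathbf{u}_0)$, enough disk classes whose symplectic areas and Maslov indices are related by a single constant $\upsilon>0$, and then to argue that these classes generate $\pi_2(\mcal{Q}_n,L)$ so that the monotonicity relation $I_\omega = \upsilon\cdot I_\mu$ holds on all of $\pi_2(\mcal{Q}_n,L)$. First I would recall from Section~\ref{sec_toricdeg} that on the central toric fiber $\mcal{X}_0$ the torus fiber over $\mathbf{u}_0$ (transported back by the symplectomorphism $\Psi_{1,0}$ on the open dense locus, which by Proposition~\ref{proposition_gradientflow} intertwines $\Phi_{(0)}$ and $\Phi_{(1)}$) inherits the standard toric disk classes $\beta_{f_1},\dots,\beta_{f_n}$ dual to the facets $f_1,\dots,f_n$, whose Maslov index is $2$ and whose area is the affine distance from $\mathbf{u}_0$ to the corresponding facet. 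Since $\mathbf{u}_0$ is the center, each of these distances equals $1$ (with $\lambda_1 = n$), so all of these classes have $I_\omega/I_\mu = \tfrac12$. The subtlety is the facet $f_0$ supported by $u_1+u_2=0$: because the non-torus fibers sit over the codimension-two stratum $u_2 = 0$ (as recalled after Theorem~\ref{theorem_GS}), the class "dual to $f_0$" does not behave as in the smooth toric case, and this is exactly where the gradient-disk technology enters.

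The key step is to use the Hamiltonian $S^1$-action generated by $\Phi_1$ (Theorem~\ref{theorem_GS}) on its smooth locus $(\Phi^n_\lambda)^{-1}(\{u_1\neq 0\})$, together with Proposition~\ref{corollary_Maslov_index_formula}, to produce and measure the exceptional disk classes through $F_0$ and $F_{01}$. Concretely, I would take an $S^1$-invariant compatible almost complex structure and a free $S^1$-orbit $\vartheta$ inside $L$ generating one of the basis loops of $\pi_1(L)$; the integral curves of $\nabla\Phi_1 = JX_{\Phi_1}$ sweep out gradient holomorphic disks with boundary $\vartheta$. One checks that near the relevant fixed component (lying over the face $f_0$, resp. $f_{01}$) the $S^1$-action is semi-free and $\Phi_1$ attains its maximum there; Proposition~\ref{corollary_Maslov_index_formula} then computes the Maslov index of such a disk as the real codimension of that maximal fixed component. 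A direct dimension count for $\mcal{Q}_n$ (the fixed locus of the $\Phi_1$-circle over the $u_1$-extreme face has real codimension $2$) gives Maslov index $2$, and the symplectic area of the gradient disk is literally $\Phi_1^{\max} - \Phi_1(\vartheta)$, i.e. the affine distance from $\mathbf{u}_0$ to $f_0$, which is again $1$. Thus the exceptional classes also satisfy $I_\omega = \tfrac12\, I_\mu$.

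It then remains to see that the classes $\beta_{f_1},\dots,\beta_{f_n}$ together with the exceptional gradient-disk class(es) through $F_0$ span $\pi_2(\mcal{Q}_n,L)$. Here I would use the long exact sequence $\pi_2(\mcal{Q}_n)\to\pi_2(\mcal{Q}_n,L)\to\pi_1(L)\to\pi_1(\mcal{Q}_n)$: since $\pi_1(\mcal{Q}_n)=0$ and $\pi_1(L)\simeq\Z^n$, the boundary map $\partial$ is surjective, and by Lemma~\ref{lemma_pi2} (for $n\ge 3$; for $n=2$ one uses $\mcal{Q}_2\simeq\CP^1\times\CP^1$ directly) $\pi_2(\mcal{Q}_n)\simeq\Z$ is generated by the line class, which is hit by a sum of the $\beta_{f_i}$'s. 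So once the $\beta_{f_i}$ map onto a set of loops generating $\pi_1(L)$ under $\partial$ — which they do, being the toric boundary classes — the listed classes generate $\pi_2(\mcal{Q}_n,L)$, and the common ratio $\upsilon=\tfrac12$ propagates to all of $\pi_2(\mcal{Q}_n,L)$ by linearity of $I_\omega$ and $I_\mu$. Since $\upsilon>0$, $L$ is a monotone Lagrangian torus. The main obstacle I expect is the careful local analysis at the fixed component over $f_0$: verifying semi-freeness of the $\Phi_1$-circle near that stratum and that it is the \emph{maximal} fixed component for the chosen sign of $\Phi_1$, so that Proposition~\ref{corollary_Maslov_index_formula} applies and yields Maslov index exactly $2$ rather than something larger; everything else is a bookkeeping of affine distances in the simplex~\eqref{equ_GZpolytope}.
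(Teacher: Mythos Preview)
There is a genuine gap in your treatment of the disk class associated to the facet $f_0$. You propose using the circle generated by $\Phi_1$, but the moment map $\Phi_1$ does \emph{not} attain its maximum along $f_0$: on the simplex~\eqref{equ_GZpolytope} the coordinate $u_1$ reaches its extreme values only at the two vertices $(\pm\lambda_1,\lambda_1,\dots,\lambda_1)$, so the maximal fixed component of the $\Phi_1$-circle is an isolated point of real codimension $2n$, not $2$. The corresponding gradient disk therefore has Maslov index $2n$ and area $\lambda_1-0=n$, not $1$. Over the relative interior of $f_0$ (where $u_1=-u_2<0$) the $\Phi_1$-circle acts \emph{freely}; its only non-extremal fixed component sits over the codimension-two stratum $f_{01}$ and has real codimension $4$, so Proposition~\ref{corollary_Maslov_index_formula} (which requires the \emph{maximal} fixed component) yields no index-$2$ disk from $\Phi_1$ in any case.

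The fix is exactly what the paper does: for each facet use the circle in the direction of the facet normal. For $f_0$ one takes the Hamiltonian $-\Phi_1-\Phi_2$, whose maximum value $0$ is attained precisely along $F_0$, a real-codimension-$2$ component; Proposition~\ref{corollary_Maslov_index_formula} then gives Maslov index $2$, and the area is $0-(-(0+1))=1$. (Alternatively, your toric-import argument for $f_1,\dots,f_n$ applies equally well to $f_0$: the singular locus of $\mcal{X}_0$ lies over $f_{01}$, and the basic toric disk for $f_0$ lands in the relative interior of $f_0$, avoiding it, so it transports back under $\Psi_{1,0}$. There is nothing ``exceptional'' about $f_0$ for this step; the exceptional behaviour is over $f_{01}$ and enters only later in the disk-potential computation.) Note also that $n=2$ needs a separate word: there $\pi_2(\mcal{Q}_2)\simeq\Z^2$, so $\pi_2(\mcal{Q}_2,L)\simeq\Z^4$, and besides the three facet classes one must include the class of the Lagrangian $2$-sphere over the origin, which has Chern number and area zero.
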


\begin{proof}
At the center $\mathbf{u}_\mathrm{0}$, the map $\Phi^n_\lambda$ generates a Hamiltonian torus action by Theorem~\ref{theorem_GS}. Thus, the fiber $(\Phi^n_\lambda)^{-1}(\mathbf{u}_\mathrm{0})$ is a free $T^n$-orbit, which is a Lagrangian torus. The quadric $\mcal{Q}_n$ is a Fano manifold so that it is simply connected.  When $n \geq 3$, by the long exact sequence of homotopy groups and Lemma~\ref{lemma_pi2}, we have 
$$
\pi_2 (\mcal{O}^n_\lambda, (\Phi^n_\lambda)^{-1}(\mathbf{u}_\mathrm{0})) \simeq \Z^{n+1}.
$$ 

We shall generate gradient holomorphic disks intersecting $F_i$ using the Hamiltonian ${S}^1$-actions. Set $\Phi_{n+1} := n$. For each $i = 1, \dots, n$, the function $\Phi_{i} - \Phi_{i+1}$ generates a Hamiltonian ${S}^1$-action by Theorem~\ref{theorem_GS} and hence produces a gradient holomorphic disk $\varphi_i$ of an orbit of this $S^1$-action. Because of the pattern~\eqref{equ_GZpolytope}, the maximum component of $\Phi_{i} - \Phi_{i+1}$ occurs over the facet $F_i$ and therefore each class $[\varphi_i]$ has the Maslov index two by Proposition~\ref{corollary_Maslov_index_formula}. Moreover, the symplectic area of the gradient disk is given by 
\begin{equation}\label{equ_areadis}
\int_{\mathbb{D}^2} \varphi_i^* \, \omega_\lambda^n =  (\Phi_{i} - \Phi_{i+1})(\mathbf{u}_{\max}) - (\Phi_{i} - \Phi_{i+1})(\mathbf{u}_0) = 0 - ( - 1) = 1
\end{equation}
where $\mathbf{u}_{\max} \in F_i$.
Also, we generate a gradient holomorphic disk $\varphi_0$ of Maslov index two and area one using the $S^1$-action generated by $- \Phi_1 - \Phi_2$  intersecting $F_{0}$ over the facet $f_0$. In sum,  
$$
\omega([\varphi_i]) = 1 = 2/2 = \mu([\varphi_i]) /2
$$ 
for $i = 0, 1, \dots, n$.
As the classes $[\varphi_0], [\varphi_1], \dots, [\varphi_n]$ form a basis for $\pi_2 (\mcal{O}^n_\lambda, (\Phi^n_\lambda)^{-1}(\mathbf{u}_\mathrm{0}))$, the fiber $(\Phi^n_\lambda)^{-1}(\mathbf{u}_\mathrm{0})$ is monotone.

When $n = 2$, we have $\pi_2 (\mcal{O}^2_\lambda, (\Phi^2_\lambda)^{-1}(\mathbf{u}_\mathrm{0})) \simeq \Z^{4}$. Via the toric degeneration $\Psi_{1,0}$ in Proposition~\ref{proposition_gradientflow}, the Lagrangian sphere $S^2$ located at  the origin $(u_1 = 0, u_2 = 0)$ collapses to a point. There are three gradient disks $\varphi_0, \varphi_1$, and $\varphi_2$ of Maslov index two with symplectic area one intersecting $F_0, F_1$, and $F_2$, respectively.
The classes $[\varphi_0], [\varphi_1], [\varphi_2]$, and $[S^2]$ form a basis for $\pi_2 (\mcal{O}^2_\lambda, (\Phi^2_\lambda)^{-1}(\mathbf{u}_\mathrm{0}))$.  Since the Chern number for Lagrangian sphere is zero and the area is zero, the monotonicity of the fiber $(\Phi^2_\lambda)^{-1}(\mathbf{u}_\mathrm{0})$ follows.
\end{proof}

For later usage, we denote by $\beta_i$ the class represented the gradient disk $\varphi_i$ in the proof of Proposition~\ref{cor_monotonetorus}, intersecting the inverse image of the relative interior of $f_i$ once. That is,  
\begin{equation}\label{equ_betaivarphi}
\beta_i := [\varphi_i]
\end{equation}
for $i = 0, 1, \dots, n$.

\section{Disk potential functions for quadrics}

In this section, we present the main result of this paper and discuss relations with other works. 

\subsection{Disk potential functions for Gelfand--Zeitlin systems of quadrics}

When it comes to the Gelfand--Zeitlin system on every coadjoint $\mathrm{U}(n)$-orbit, the holomorphic disks of Maslov index two can be immediately classified via the polyhedral geometry of the corresponding GZ polytope. Specifically, holomorphic disks of Maslov index two bounded by its GZ torus fiber are those intersecting a facet exactly once according to \cite{NishinouNoharaUeda} as in the toric moment maps of Fano toric manifolds (cf. \cite{ChoOh}). In particular, the terms of the disk potential for the GZ system on a coadjoint $\mathrm{U}(n)$-orbit are in one-to-one correspondence with the facets of the GZ polytope. 

For a coadjoint $\mathrm{SO}(n)$-orbit, \emph{however}, the number of terms of the potential function for the GZ system can be \emph{greater} than that of facets of the GZ polytope. Besides the disks emanating from the inverse image of a facet, there \emph{does} exist a holomorphic disk of Maslov index two  emanating from the inverse image of a certain lower stratum. 

To illustrate this feature, consider a gradient holomorphic disk of an orbit $\vartheta$ generated by $\Phi_2$ in $\mcal{Q}_n \simeq \mathrm{OG}(1, \C^{n+2})$. The maximal component of $\Phi_2$ occurs over the stratum given by $u_2 = n$, which is of codimension $n-1$ in the base $\Delta_\lambda^n$. According to Proposition~\ref{corollary_Maslov_index_formula}, the Maslov index of the class represented by the gradient holomorphic disk is $2(n-1)$. By the adjunction, the minimal Chern number of $\mcal{Q}_n \subset \CP^{n+1}$ is $n$. If there exists a holomorphic disk whose boundary is the orbit $\vartheta^{-1}$, then it should have the Maslov index $2$, so that the topological sum of the disk with boundary $\vartheta^{-1}$ and the disk with boundary $\vartheta$ can have the Chern number $n$. It is supposedly intersecting the inverse image of the codimension two stratum supported by $u_2 = 0$ in Section~\ref{subsection_cominsysonquad}. Due to the additional dimension contribution from the spherical factors on the GZ fibers on the stratum, the codimension of the inverse image of $u_2 = 0$ is two so that the Maslov index of disk with boundary $\vartheta^{-1}$ intersecting this component may have Maslov index two. We shall show that there exists such a holomorphic disk and compute its open Gromov--Witten invariant in Section~\ref{section_computingdiscpotfuncs}.

Let $\vartheta_j$ be an $S^1$-orbit generated by the Hamiltonian $\Phi_j$ in~\eqref{equ_gssys}. We then take holonomy variables
\begin{equation}\label{equ_holonomyvariables}
z_j := \mathrm{hol}_\nabla (\vartheta_j) \in \C^* \mbox{ for $j = 1, \dots, n$} \
\end{equation}
The main theorem of the paper is stated below. 

\begin{theorem}\label{theorem_main}
Assume that $n \geq 2$. The disk potential of the monotone Gelfand--Zeitlin fiber $L := (\Phi^n_\lambda)^{-1}(\mathbf{u}_\mathrm{0})$ is
\begin{equation}\label{equ_potentialGZequngeq3}
W_{L} (\mathbf{z}) = \frac{1}{z_n} + \frac{z_n}{z_{n-1}} + \dots + \frac{z_3}{z_2} + \frac{z_{2}}{z_{1}} + 2 z_2 + z_1 z_2.
\end{equation}
\end{theorem}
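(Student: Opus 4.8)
The plan is to compute the disk potential by enumerating all homotopy classes $\beta \in \pi_2(\mcal{Q}_n, L)$ of Maslov index two that can carry a nonzero open Gromov--Witten invariant $n_\beta$, then to determine each $n_\beta$ and read off $W_L(\mathbf{z}) = \sum_{\mu(\beta)=2} n_\beta z^{\partial\beta}$. The first step exploits the toric degeneration of Proposition~\ref{proposition_gradientflow}: since $\Psi_{1,0}$ is a symplectomorphism on the open dense part $\mathring{\mcal{X}}_1 \to \mathring{\mcal{X}}_0$ and the GZ fiber $L$ sits in this locus, every Maslov-two holomorphic disk bounded by $L$ either stays away from $\mathrm{Sing}(\mcal{X}_0)$ (and thus corresponds, via $\Psi_{1,0}$, to a Maslov-two holomorphic disk in the central toric orbifold $\mcal{X}_0$ bounded by a toric orbifold fiber) or must intersect the inverse image of the codimension-two stratum $\{u_2 = 0\}$ on which the GZ fibers degenerate to $S^2 \times T^{n-2}$. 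The classification of holomorphic orbi-disks of Chou--Poddar \cite{ChoPoddar} applied to $\mcal{X}_0$ handles the first type; combined with the area and Maslov-index bookkeeping of Section~\ref{subsection_cominsysonquad}, this yields the classes $\beta_0, \beta_1, \dots, \beta_n$ of~\eqref{equ_betaivarphi} (giving the monomials $z_1 z_2, z_2/z_1, z_3/z_2, \dots, z_n/z_{n-1}, 1/z_n$ from the facets $f_0, f_1, \dots, f_n$) together with at most one additional class $\beta'$ supported over the stratum $\{u_2 = 0\}$, whose boundary class must be $z_2$ by the monotonicity constraint ($\omega(\beta') = 2\upsilon$ forces $\beta'$ to be a primitive combination pairing correctly with $\vartheta_\bullet$, and the stratum $\{u_2=0\}$ pins $\partial\beta' = \vartheta_2$). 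This produces the shape $1/z_n + z_n/z_{n-1} + \dots + z_2/z_1 + c \cdot z_2 + z_1 z_2$ with $c = n_{\beta'}$ the only undetermined coefficient.

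The second step computes the counting invariants. For the ``toric'' classes $\beta_i$, one shows $n_{\beta_i} = 1$ by transporting the count through $\Psi_{1,0}$ to the standard count of Maslov-two disks intersecting a single facet of the toric orbifold $\mcal{X}_0$, using Lemma~\ref{lemma_invarianceofcounting} for independence of the almost complex structure. The nontrivial point is $c = n_{\beta'}$, and here I would not attempt a direct analysis of the moduli space but instead use the Lie-theoretic mirror and the structure of the monotone Fukaya category. Specifically: the object $(L, \nabla)$ at a critical point of $W_L$ is nonzero in $\mcal{F}uk(\mcal{Q}_n)_w$ with $w = W_L(\nabla)$, and by Theorem~\ref{theorem_corsheridan} such $w$ must be an eigenvalue of $c_1(T\mcal{Q}_n)\star$, which by Proposition~\ref{cor_criticalvalues} lies in $\{n\zeta : \zeta^n = 4q\} \cup \{0\}$ (with $q$ normalized to the monotone value). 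Imposing that the critical values of $\frac{1}{z_n} + \frac{z_n}{z_{n-1}} + \dots + \frac{z_2}{z_1} + c z_2 + z_1 z_2$ land in this set forces $c = 2$: one computes the critical points of this Laurent polynomial in the single remaining free parameter $c$, evaluates, and matches. A consistency cross-check is that with $c = 2$ the potential agrees, after a cluster change of variables, with Przhiyalkovski\u{\i}'s chart of the Pech--Rietsch--Williams superpotential $W_q$, and the Jacobian ring then has the right dimension $= \dim H^*(\mcal{Q}_n)$.

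The main obstacle is the completeness of the classification in the first step --- ruling out any further Maslov-two class supported over lower strata and, more subtly, showing that the GZ torus $L$ does not bound ``hidden'' Maslov-two disks invisible to the toric degeneration (e.g.\ disks that pass through $\mathrm{Sing}(\mcal{X}_0)$ in a way not accounted for by the stratum $\{u_2=0\}$). This requires the combinatorial test of \cite{ChoKimSO} identifying $\{u_2=0\}$ as the unique non-torus locus, together with a degeneration/bubbling argument (as in \cite{NishinouNoharaUeda}, \cite{FOOOS2S2}) controlling how a sequence of holomorphic disks near the degeneration can break; the spherical factor $S^2$ appearing in the degenerate fiber $S^2 \times T^{n-2}$ contributes the extra dimension that makes a single Maslov-two disk over $\{u_2=0\}$ possible, and one must verify no higher-multiplicity or multiply-covered configuration survives. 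A secondary technical point is orientations and spin structures: the sign $+2$ (rather than, say, $\pm 1$ or $0$) for $n_{\beta'}$ must be pinned down either by the mirror-matching argument above or by a direct contribution count of the two $S^1$-orbits in the $S^2$ factor; I expect the cleaner route is to let the eigenvalue constraint of Theorem~\ref{theorem_corsheridan} plus Proposition~\ref{cor_criticalvalues} do this work, since a direct sign computation for an orbifold disk degeneration is delicate.
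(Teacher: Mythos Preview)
Your proposal is correct and matches the paper's approach closely: the paper classifies Maslov-two classes via the toric degeneration (Section~\ref{sect_effectivediscclasses}), computes $n_{\beta_i}=1$ for the facet classes by transport through $\Psi_{1,0}$ (Lemma~\ref{lemma_computationofogw}), and then pins down the remaining coefficient $\kappa$ via Sheridan's eigenvalue constraint combined with the Pech--Rietsch--Williams critical values (Proposition~\ref{proposition_openGWin}), exactly as you outline. The one technical device you do not name explicitly is the toric blow-up $\widehat{\mcal{X}}_0$ of the singular central fiber, through which the paper lifts limit disks (Lemma~\ref{lemma_limitofholomorphicdiscs}) so as to apply the smooth Cho--Oh/FOOO classification (Lemma~\ref{lemma_aaa}) and thereby show that all the extra classes collapse to the single monomial $z_2$; this is precisely how the paper resolves your ``disks through $\mathrm{Sing}(\mcal{X}_0)$'' obstacle.
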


The proof of Theorem~\ref{theorem_main} will be provided at the end of Section~\ref{section_computingdiscpotfuncs}. Throughout Section~\ref{sect_effectivediscclasses} and~\ref{section_computingdiscpotfuncs}, we shall prepare lemmas and propositions for the proof of  Theorem~\ref{theorem_main}. Specifically, Section~\ref{sect_effectivediscclasses} is reserved for the classification of all homotopy classes which can be represented by a holomorphic disk of Maslov index two and hence the Laurent monomials of the disk potential $W_L(\mathbf{z})$ in~\eqref{equ_potentialGZequngeq3} will be determined. In Section~\ref{section_computingdiscpotfuncs}, we will compute the counting invariants and therefore the coefficients of the disk potential $W_L(\mathbf{z})$ in~\eqref{equ_potentialGZequngeq3} will be determined. In the remaining part of this section, an example and corollaries of Theorem~\ref{theorem_main} will be discussed.

\begin{example}
When $n = 3$, the potential function is
$$
W_{L}(\mathbf{z}) = \frac{1}{z_3} + \frac{z_3}{z_2} + \frac{z_2}{z_1} + 2 z_2 + {z_1z_2}.
$$
The corresponding holomorphic disks are illustrated in Figure~\ref{fig_holomorphicdiscofMaslovindextwo}.
\begin{figure}[h]
	\scalebox{0.85}{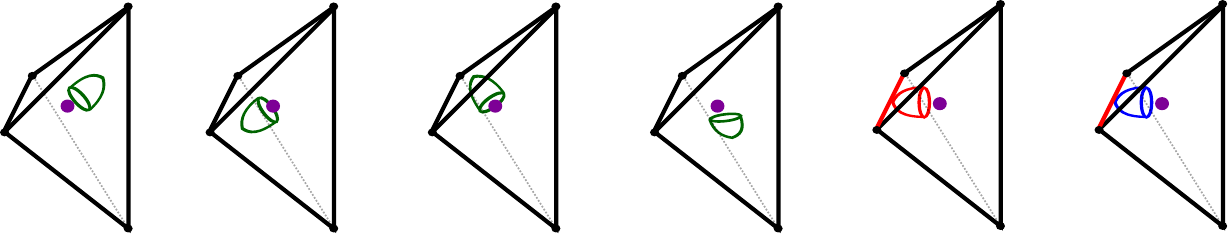}
	\caption{\label{fig_holomorphicdiscofMaslovindextwo} Holomorphic disks of Maslov index two bounded by $L$ in $\mcal{Q}_3$}	
\end{figure}
\end{example}

The disk potential $W_{L}$ can be used to compute the deformed Floer cohomology of $L$. By computing its critical point, the Floer cohomology deformed by $\nabla$ corresponding to the critical point is non-zero so that $L$ is non-displaceable. Recall that $L$ is \emph{non-displaceable} if $L \cap \phi(L) \neq \emptyset$ for any time-dependent Hamiltonian diffeomorphism $\phi$.

\begin{corollary}\label{corollary_nondisplaceable}
The monotone Lagrangian torus $L$ is non-displaceable.
\end{corollary}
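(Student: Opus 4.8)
The plan is to derive non-displaceability of $L$ from the structural properties of the monotone Fukaya category together with the explicit disk potential computed in Theorem~\ref{theorem_main}. The crux is to locate a critical point of $W_L$ in the algebraic torus $(\C^*)^n$ whose critical value is one of the admissible eigenvalues, and then invoke the standard Floer-theoretic obstruction-vanishing mechanism: at a critical point $\nabla_0$ of $W_L$, the Floer differential $\frak{m}^{\nabla_0}_1$ annihilates the degree-one cochains $\vartheta^\vee_1, \dots, \vartheta^\vee_n$, and by the $A_\infty$-relations it annihilates everything, so $HF((L,\nabla_0),(L,\nabla_0)) \cong H^*(T^n;\C) \neq 0$. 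A non-vanishing self-Floer cohomology forces $L$ to be non-displaceable, since a Hamiltonian diffeomorphism $\phi$ with $L \cap \phi(L) = \emptyset$ would make the Floer complex of the pair $(L, \phi(L))$ acyclic while Floer cohomology is invariant under Hamiltonian isotopy.

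The concrete step is to solve $\partial W_L / \partial z_j = 0$ for the potential in~\eqref{equ_potentialGZequngeq3}. First I would differentiate: for $3 \le j \le n-1$ the equations read $z_{j+1}/z_j^2 = 1/z_{j-1}$ (with the obvious modifications at the two ends, where the monomials $1/z_n$, $z_n/z_{n-1}$, $2z_2$, and $z_1 z_2$ enter), and $\partial_{z_1} W_L = -z_2/z_1^2 + z_2 = 0$ gives $z_1^2 = 1$. Substituting $z_1 = \pm 1$ into the remaining equations and working from $z_1$ outward, one solves recursively for $z_2, \dots, z_n$; the chain $z_{j+1}/z_{j-1} = z_j^2/ \cdots$ collapses so that each $z_j$ is determined (up to a root of unity governed by the $\partial_{z_2}$ equation $z_3/z_2^2 = 1/z_1 - 2 - z_1$, i.e. governed by the constant-term data $1/z_1 - 2 - z_1$, which equals $-4$ when $z_1 = 1$). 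This should produce exactly $n$ critical points whose critical values are $\{n\zeta : \zeta^n = 4\} \cup \{0\}$, matching Proposition~\ref{cor_criticalvalues} with $q = 1$; in particular, at least one critical point exists, which is all that is needed. I would then remark that the existence of a critical point is automatic from Theorem~\ref{theorem_corsheridan} combined with the fact that $L$ with a suitable $\nabla$ is a non-zero object (as discussed after Definition~\ref{def_diskpotentfunct}), so one does not even strictly need the explicit root-finding, but exhibiting the critical point concretely is cleaner.

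The order of operations is therefore: (i) recall the obstruction-vanishing statement — a critical point $\nabla_0$ of $W_L$ yields $\frak{m}^{\nabla_0}_0(1) = W_L(\nabla_0)\cdot[L]$ and $d W_L(\nabla_0) = 0$, so $\vartheta^\vee_i$ are cocycles, hence (by the $A_\infty$-relation and unitality of $[L]$) all cochains are, giving $HF((L,\nabla_0)) \cong H^*(T^n;\C)$; (ii) exhibit a critical point by solving the polynomial system above, or alternatively cite that a non-empty critical locus of $W_L$ is forced because the nonzero-object condition is equivalent to the Floer cohomology being nonvanishing and $W_L(\nabla) \in \mathrm{Spec}(c_1 \star)$ with the spectrum nonempty by Proposition~\ref{cor_criticalvalues}; (iii) conclude $L$ is non-displaceable by Hamiltonian invariance of Floer cohomology. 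The main obstacle is bookkeeping in step (ii): one must be careful at the boundary indices $j \in \{1,2,n-1,n\}$ where the recursion for the critical equations degenerates, and one must check the solutions lie in $(\C^*)^n$ (no coordinate vanishes), which follows from the monomial structure but should be verified. The low-dimensional case $n = 2$, where $W_L = 1/z_2 + z_2/z_1 + 2z_2 + z_1 z_2$ on $\mcal{Q}_2 \cong \CP^1 \times \CP^1$, should be checked by hand and matched against the known computation of \cite{AurouxSpecial, FOOOS2S2}.
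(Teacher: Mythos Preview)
Your approach is essentially the paper's: exhibit a critical point of $W_L$, deduce that the deformed Floer cohomology is $H^*(T^n;\C)\neq 0$, and conclude non-displaceability by Hamiltonian invariance. The paper treats this as immediate from the sentence preceding the corollary, so your write-up is more detailed than what the paper actually supplies.

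Two small corrections to your bookkeeping. First, the $\partial_{z_2}$ equation has a sign slip: one gets $z_3/z_2^2 = 1/z_1 + 2 + z_1$, which is $+4$ at $z_1=1$ (compare the paper's computation in the proof of Proposition~\ref{proposition_openGWin}). Second, the critical values of $W_L$ on the torus chart are exactly $\{n\zeta : \zeta^n = 4\}$; the eigenvalue $0$ of $c_1\star$ is \emph{not} hit by $W_L$ on $(\C^*)^n$. Finally, your ``alternative'' in step (ii) is circular: Theorem~\ref{theorem_corsheridan} only constrains where critical values can land, it does not by itself produce a critical point, so you do need the explicit solve (which, with the sign fixed, goes through as in the paper and yields $z_j = z_n^{\,n-j+1}$ with $z_n^{-n}=4$, all coordinates manifestly in $\C^*$).
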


\begin{remark}\label{remarkn2n3differ}
The disk potential~\eqref{equ_potentialGZequngeq3} should be interpreted differently when $n = 2$ and $n \geq 3$. When $n = 2$, the term $2 z_2$ arises from two different classes of Maslov index two, which differ by the Lagrangian sphere $[S^2]$ (cf. \cite{FOOOS2S2}). If $n \geq 3$, then the term $2 z_2$ comes from one single class of Maslov index two. Recall from Section~\ref{subsection_cominsysonquad} that the fiber over any point in the relative interior of the stratum given by $u_2 = 0$ of codimension two is diffeomorphic to $S^2 \times T^{n-2}$. When $n \geq 3$, the class represented by a sphere is trivial since it can be isotoped to a sphere class in the fiber $S^n$ at the origin along a line segment in the face supported by $u_2 = 0$.
\end{remark}

More generally, suppose that $L(\mathbf{u}) := (\Phi^n_\lambda)^{-1}(\mathbf{u})$ for an interior point $\mathbf{u}$ (not necessarily being the center) of the GZ polytope $\Delta^n_\lambda$. The fiber $L(\mathbf{u})$ is still a Lagrangian torus, but is no longer monotone {unless u is $u_0$}. The counting invariants are \emph{not} invariant under a choice of almost complex structures in general. Nevertheless, assuming that $\mathbf{u}$ is sufficiently close to $\mathbf{u}_0$, the disk potential function can be defined and has the following form$\colon$ 
\begin{equation}\label{equ_potentialGZequngeq32}
W_{L(\mathbf{u})} (\mathbf{z}) = \frac{1}{z_n} T^{\lambda_1 - u_n} + \frac{z_n}{z_{n-1}} T^{u_n - u_{n-1}} + \dots + \frac{z_3}{z_2} T^{u_3-u_2} + \frac{z_{2}}{z_{1}} T^{u_2 -u_1} + 2 z_2 T^{u_2} + z_1 z_2 T^{u_2 + u_1}.
\end{equation}
It is because by taking a line segment connecting $\mathbf{u}$ and $\mathbf{u}_0$, one can take an isotopy between two Lagrangian tori. The appearance of holomorphic disks of Maslov index $\leq 0$ is a closed condition and the Fredholm regularity is an open condition. If $\mathbf{u}$ and $\mathbf{u}_0$ are closed enough, one constructs a cobordism between moduli spaces of disks.

\begin{remark}
If $\mathbf{u} = \mathbf{u}_0$, that is $L(\mathbf{u})$ is monotone, then the symplectic areas of Maslov index two classes are all equal. For this reason, as in Definition~\ref{def_diskpotentfunct}, the formal parameter $T$ recording the symplectic area is omitted. But, if $L(\mathbf{u})$ is not monotone, the formal parameter $T$ is necessary in~\eqref{equ_potentialGZequngeq32}.
\end{remark}

\subsection{Relation with other works}

When $n = 2$, the disk potential of the form~\eqref{equ_potentialGZequngeq3} for the quadric hypersurface $\mcal{Q}_2 \simeq \CP^1 \times \CP^1$ is known. In \cite{AurouxTdual}, Auroux constructed a Lagrangian torus fibration and discussed the disk potentials in two different chambers to exhibit the wall-crossing phenomenon. The disk potential of the Chekanov torus agrees with~\eqref{equ_potentialGZequngeq3} up to a certain coordinate change. Also, to produce a continuum of non-displaceable tori,  Fukaya, Oh, Ohta, and Ono \cite{FOOOS2S2} constructed Lagrangian tori by doing a surgery replacing the $A_1$-singularity by the Milnor fiber and computed their disk potential function. It agrees with the expression~\eqref{equ_potentialGZequngeq3} up to a certain coordinate change. By the work of Oakley--Usher \cite{OakleyUsher}, the above two monotone Lagrangian tori are related by a Hamiltonian isotopy. The computation result of the disk potential in~\eqref{equ_potentialGZequngeq3} alludes to the Hamiltonian invariance of the Chekanov torus and the monotone Gelfand--Zeitlin torus fiber. The author hopes that the invariance will be discussed in future somewhere else.

We recall that Przhiyalkovski\u{\i}'s mirror, see \cite[Page 775]{Przhiya}
\begin{equation}\label{equ_przmirrorr}
\check{X}_\mathrm{Prz} := (\C^*)^{n},  \quad W_\mathrm{Prz} (x_1, \dots, x_n) = x_1 + \dots + x_{n-1} + \frac{(x_n + 1)^2}{x_1 x_2 \dots x_n}.
\end{equation}
The LG mirror derived from disk counting in Theorem~\ref{theorem_main} agrees with one cluster chart of Pech--Rietsch--Williams mirror and the superpotential restricted to the chart in Section~\ref{mirrorsymmofquadricss}. In particular, the disk potential~\eqref{equ_potentialGZequngeq3} is equal to Przhiyalkovski\u{\i}'s superpotential under the coordinate change
\begin{equation}\label{equ_coordchaprzgz}
x_1 \mapsto \frac{1}{z_n}, \,\, x_n \mapsto z_1, \,\, x_i \mapsto \frac{z_{n+2-i}}{z_{n+1-i}} \quad \mbox{for $i = 2, \cdots, n-1$}.
\end{equation}
Namely, counting of stable disks bounded by the GZ torus fiber produces Przhiyalkovski\u{\i}'s mirror. 

\begin{corollary}\label{corollary_Pzrmirror}
The Przhiyalkovski\u{\i}'s superpotential of $\mcal{Q}_n$ is equal to the disk potential function of Gelfand--Zeitlin torus fiber under the coordinate change~\eqref{equ_coordchaprzgz}.
\end{corollary}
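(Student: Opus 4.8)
The statement to prove is Corollary~\ref{corollary_Pzrmirror}: that Przhiyalkovski\u{\i}'s superpotential $W_\mathrm{Prz}$ in~\eqref{equ_przmirrorr} coincides with the disk potential $W_L(\mathbf{z})$ in~\eqref{equ_potentialGZequngeq3} after the substitution~\eqref{equ_coordchaprzgz}. Since Theorem~\ref{theorem_main} already supplies the closed form of $W_L$, this is a purely algebraic verification: substitute the monomials from~\eqref{equ_coordchaprzgz} into $W_\mathrm{Prz}$ and match term by term with $W_L$. The only genuinely nontrivial point is bookkeeping the reversal of indices built into~\eqref{equ_coordchaprzgz}, so I would carry it out carefully rather than claim it is obvious.

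\textbf{Step 1: rewrite the linear part of $W_\mathrm{Prz}$.} Under~\eqref{equ_coordchaprzgz} we have $x_1 \mapsto 1/z_n$, $x_i \mapsto z_{n+2-i}/z_{n+1-i}$ for $2 \le i \le n-1$, and $x_n \mapsto z_1$. Hence
\[
x_1 + x_2 + \dots + x_{n-1} \;\longmapsto\; \frac{1}{z_n} + \frac{z_n}{z_{n-1}} + \frac{z_{n-1}}{z_{n-2}} + \dots + \frac{z_3}{z_2},
\]
which is exactly the ``Laurent tail'' $\tfrac{1}{z_n} + \tfrac{z_n}{z_{n-1}} + \dots + \tfrac{z_3}{z_2}$ appearing in~\eqref{equ_potentialGZequngeq3}. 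This is where the index reversal $i \mapsto n+2-i$ does its work: the telescoping product structure is preserved, just read in the opposite order.

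\textbf{Step 2: rewrite the last term of $W_\mathrm{Prz}$.} I need $x_1 x_2 \cdots x_n$ under the substitution. The middle factors telescope:
\[
x_1 x_2 \cdots x_{n-1} \;\longmapsto\; \frac{1}{z_n} \cdot \frac{z_n}{z_{n-1}} \cdot \frac{z_{n-1}}{z_{n-2}} \cdots \frac{z_3}{z_2} \;=\; \frac{1}{z_2},
\]
and multiplying by $x_n \mapsto z_1$ gives $x_1 x_2 \cdots x_n \mapsto z_1/z_2$. Therefore
\[
\frac{(x_n+1)^2}{x_1 x_2 \cdots x_n} \;\longmapsto\; \frac{(z_1+1)^2}{z_1/z_2} \;=\; z_2 \cdot \frac{(z_1+1)^2}{z_1} \;=\; z_2\left(z_1 + 2 + \frac{1}{z_1}\right) \;=\; z_1 z_2 + 2 z_2 + \frac{z_2}{z_1}.
\]
This reproduces precisely the remaining three monomials $\tfrac{z_2}{z_1} + 2 z_2 + z_1 z_2$ of~\eqref{equ_potentialGZequngeq3}. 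Combining Steps 1 and 2 gives $W_\mathrm{Prz} \circ (\text{substitution}) = W_L$, completing the proof; one should also note the substitution is a monomial change of coordinates, hence an isomorphism of algebraic tori $(\C^*)^n \to (\C^*)^n$ (its matrix on character lattices is unimodular), so the identification respects the Jacobian rings as well.

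\textbf{Main obstacle.} There is no serious obstacle — given Theorem~\ref{theorem_main}, the corollary is a one-line algebraic identity once the telescoping is spelled out. The only place to be careful is the edge cases $n = 2$ and $n = 3$, where the range $2 \le i \le n-1$ for the middle variables is small or empty: when $n = 2$ one has only $x_1 \mapsto 1/z_2$ and $x_2 \mapsto z_1$, and the computation of $(x_2+1)^2/(x_1 x_2)$ still gives $z_1 z_2 + 2 z_2 + z_2/z_1 = z_1 z_2 + 2z_2 + z_2/z_1$, matching~\eqref{equ_potentialGZequngeq3} with the tail consisting of the single term $1/z_2$; I would check this degenerate case explicitly to be safe. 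One might also remark that the cluster-chart statement — that this is the Przhiyalkovski\u{\i} chart inside the Pech--Rietsch--Williams mirror of Section~\ref{mirrorsymmofquadricss} — follows by composing with the coordinate identification recorded in \cite{Przhiya}, so Corollary~\ref{corollary_Pzrmirror} together with Corollary~\ref{corollary_Pzrmirror}'s companion statement Corollary~\ref{corollary_Pzrmirror} pins down the full comparison; but that is a separate bookkeeping exercise handled where $W_q$ is introduced.
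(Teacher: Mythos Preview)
Your proposal is correct and takes essentially the same approach as the paper: the paper treats the corollary as an immediate consequence of Theorem~\ref{theorem_main} and the explicit coordinate change~\eqref{equ_coordchaprzgz}, stating the claim in the prose immediately preceding the corollary without a separate proof environment. Your detailed term-by-term verification (the telescoping product and the expansion of $(z_1+1)^2 z_2/z_1$) is exactly the algebra the paper leaves to the reader.
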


Homological mirror symmetry of quadric hypersurfaces (more generally Fano hypersurfaces) was established by Sheridan \cite{SheridanFano}. He employed an immersed Lagrangian with bounding cochains to split-generate the monotone Fukaya category and the non-zero objects were classified into two groups$\colon$ objects corresponding to a small eigenvalue and objects corresponding to a big eigenvalue. The monotone torus $L$ in Theorem~\ref{theorem_main} together with $\nabla$ corresponding to a critical point is a non-zero object with small eigenvalue. Moreover, by the structural result in \cite{SheridanFano}, each non-zero object $(L, \nabla)$ split-generates one summand of the quantum cohomology with a non-zero eigenvalue. 

The disk potential function of $\mathrm{OG}(1, \C^5)$ was computed in \cite{HongKimLau}. They have used the Lefschetz fibration model in order to construct a singular Lagrangian and to compute open Gromov--Witten invariants by the wall-crossing. As in $\mathrm{OG}(1, \C^5)$, the deformation of an immersed Lagrangian was computed to compactify the LG mirror partially. Since we are playing with a completely integrable system, one needs a different strategy to compute open Gromov--Witten invariants.  Also, it would be interesting to construct immersed Lagrangians to compactify the LG mirror of Lagrangian tori partially for arbitrary quadric hypersurface.

\section{Effective disk classes}\label{sect_effectivediscclasses}

Let $\mcal{X} = \{ \mcal{X}_{\mathbf{t}} \}$ be the toric degeneration of the quadric $\mcal{Q}_n$ in~\eqref{equ_familyofquadrics}. For notational simplicity, let us set $\mcal{X}_{t} := \mcal{X}_{(t)}$ and recall that $\Psi_{1,t} \colon \mcal{X}_{1} \to \mcal{X}_{t}$ is the map  in Proposition~\ref{proposition_gradientflow}.
We denote by $L_1$ the monotone GZ torus fiber $(\Phi^n_\lambda)^{-1}(\mathbf{u}_\mathrm{0})$ in Proposition~\ref{cor_monotonetorus} and set 
$L_t :=  \Psi_{1,t}(L_1)$ in $\mcal{X}_t$.

The main proposition in this section characterizes the homotopy classes of Maslov index two which can be represented by a holomorphic disk in terms of the intersection with the inverse images of faces. It greatly reduces homotopy classes which can contribute to the disk potential of $L_1$.

\begin{proposition}\label{proposition_charmaslovindextwo}
If $\beta \in \pi_2(\mcal{X}_1, L_1) $ is represented by a $J$-holomorphic map $\varphi_\beta$ of Maslov index two bounded by $L_1$ for some generic compatible almost complex structure $J$ with $n_\beta \neq 0$, then $\varphi_\beta$ intersects the inverse image over the relative interior of either a facet or the stratum $f_{01}$, supported by $u_2 = 0$ once. 
\end{proposition}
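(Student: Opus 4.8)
The plan is to use the toric degeneration from Proposition~\ref{proposition_gradientflow} to transfer the disk-counting problem from the Gelfand--Zeitlin side to the toric central fiber $\mcal{X}_0$, where the classification of holomorphic (orbi-)disks of Maslov index two by Cho--Poddar \cite{ChoPoddar} applies. First I would recall that $\Psi_{1,0}\colon \mcal{X}_1 \to \mcal{X}_0$ restricts to a symplectomorphism on $\mathring{\mcal{X}}_1 = \Psi_{1,0}^{-1}(\mathring{\mcal{X}}_0)$, so the Lagrangian torus $L_1$ is mapped to a torus $L_0$ lying in the open dense toric part of $\mcal{X}_0$, and $\Psi_{1,0}$ identifies $\pi_2(\mcal{X}_1, L_1)$ with $\pi_2(\mcal{X}_0, L_0)$ compatibly with both the symplectic area (via the commuting triangle with $\Delta^n_\lambda$) and the Maslov index. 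Given a class $\beta$ with $n_\beta \neq 0$ and $\mu(\beta) = 2$, represented by a $J$-holomorphic disk for generic $J$, I would argue (as is standard in this circle of ideas, cf.\ \cite{NishinouNoharaUeda, NishinouNoharaUeda2}) that by choosing an almost complex structure adapted to the degeneration and taking a limit, such a disk either degenerates to a configuration of holomorphic disks and spheres in $\mcal{X}_0$ whose total homotopy class and Maslov index are preserved, or the counting invariant can be read off directly on $\mcal{X}_0$; since $\mcal{Q}_n$ is Fano and monotone, no Maslov index $\le 0$ disk bubbling occurs, and sphere bubbles carry Chern number $\ge n \ge 2$, hence $\mu = 2$ forces the limit configuration to be a single disk (possibly an orbi-disk through the singular locus).

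Next I would invoke the Cho--Poddar classification on the toric orbifold $\mcal{X}_0$: a holomorphic (orbi-)disk of Maslov index two bounded by a toric fiber in the open stratum either is one of the ``basic'' smooth disks intersecting exactly one toric divisor (a facet of $\Delta^n_\lambda$) once, or is a basic orbi-disk whose unique orbifold marked point maps to the singular stratum. In our situation the only singular stratum of $\mcal{X}_0$ lies over $\{u_2 = 0\}$ (this is exactly the facet intersection $f_{01}$, equivalently where $u_1 = u_2 = 0$ meets the polytope, consistent with the non-torus GZ fibers occurring only over the codimension-two stratum $u_2 = 0$ in Section~\ref{subsection_cominsysonquad}). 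The contribution of the transverse $A_1$-type singularity along $x_2 = 0$ in \eqref{equ_familyofquadrics} produces precisely the basic orbi-disks hitting that stratum with the appropriate isotropy, all of Maslov index two. Pulling everything back through $\Psi_{1,0}$, I conclude that $\varphi_\beta$ on the $\mcal{X}_1$ side intersects the inverse image of the relative interior of either a facet of $\Delta^n_\lambda$ or the stratum $f_{01}$ exactly once, and no other classes have $n_\beta \neq 0$.

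The main obstacle is the comparison of moduli spaces across the degeneration, i.e.\ making rigorous the claim that every Maslov-index-two holomorphic disk bounded by $L_1$ (for generic $J$ on $\mcal{X}_1$) can be accounted for by a holomorphic disk bounded by $L_0$ in $\mcal{X}_0$, with matching class and index. The subtlety is that $\mcal{X}_0$ is singular, so one cannot naively apply Gromov compactness; one must either work with an almost complex structure on the total space $\mcal{X}$ of the degeneration that is well-behaved near $\mathrm{Sing}(\mcal{X}_0)$, or argue via the gradient Hamiltonian flow that disks in $L_1$ avoiding a neighborhood of the singular locus correspond to disks in the toric part and control separately (using monotonicity and the minimal Chern number $n$) which degenerate configurations near the singular stratum can appear. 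The Fano/monotone hypotheses do the heavy lifting in ruling out unwanted bubbling, but the transfer of the \emph{existence} and \emph{multiplicity} of disk classes — as opposed to the numerical invariant — requires care; this is where I expect to lean on the regularity and cobordism arguments analogous to those following \eqref{equ_potentialGZequngeq32}, together with the explicit toric geometry of \eqref{equ_familyofquadrics}.
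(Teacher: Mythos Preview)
Your overall strategy---degenerate to the central toric fiber, classify Maslov-two disks there, and pull back---is the same as the paper's. However, there are two points where your proposal diverges in ways that matter.

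First, a genuine gap: your argument that the limit configuration in $\mcal{X}_0$ is a single disk is not correctly justified. You invoke that ``sphere bubbles carry Chern number $\ge n$'' because $\mcal{Q}_n$ is Fano, but that bound lives on $\mcal{X}_1 = \mcal{Q}_n$ and says nothing about sphere components in the singular central fiber $\mcal{X}_0$, which is a different variety with different curve classes. The paper instead uses an \emph{area} argument (Proposition~\ref{proposition_classification}): since $L_1$ is monotone with $\omega(\beta)=1$ for every Maslov-two class, the limiting curve $\varphi_0$ in $\mcal{X}_0$ has area exactly $1$, and the Cho--Poddar area formula on the toric orbifold then forces $\varphi_0$ to have no bubbles (Lemma~\ref{lemma_limitofholomorphicdiscs}). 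This is the step that does the real work, and your Chern-number substitute does not.

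Second, a methodological difference: rather than applying Cho--Poddar's orbi-disk classification directly on $\mcal{X}_0$ as you propose, the paper passes to the smooth toric blow-up $\widehat{\mcal{X}}_0 \to \mcal{X}_0$ along the singular stratum over $f_{01}$, takes the strict transform of the limiting disk, and uses the Cho--Oh classification on the \emph{smooth} toric variety $\widehat{\mcal{X}}_0$. The Maslov index comparison is then routed through the quotient $\overline{H}_2(\widehat{\mcal{X}}_0,\widehat{L}_0;\Q)$ (Lemmas~\ref{lemma_gradientdiscflow} and~\ref{lemma_3maslovcorr}), which makes the index bookkeeping transparent: if $\varphi_\beta$ hit the boundary strata more than once, or hit a stratum other than a facet or $f_{01}$, its strict transform would have Maslov index $\ge 4$. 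Your orbifold route may be salvageable, but you would need to track the desingularized (Chen--Ruan) Maslov index of orbi-disks carefully and explain why it matches the ordinary Maslov index of the disk on $\mcal{X}_1$; the paper's blow-up avoids this entirely. Also note that $\Psi_{1,*}$ is only shown to be an isomorphism on $H_2(\,\cdot\,;\Q)$ for $n\ge 3$ (and merely surjective for $n=2$), not on $\pi_2$ as you asserted.
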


To prove Proposition~\ref{proposition_charmaslovindextwo}, we exploit the toric degeneration and its toric blowup $\widehat{\mcal{X}}_0$, which is described below.  Let $\Sigma$ be the fan in $N_\R$ associated to the toric variety $\mcal{X}_0$. Taking the primitive vector $v := (0, 1, 0, \dots, 0)$, let $\widehat{\mcal{X}}_0$ be the toric variety associated to the star subdivision $\widehat{\Sigma}:= \Sigma^*(v)$ of $\Sigma$ at $v$. Dually, consider a polytope $\widehat{\Delta} := \widehat{\Delta^n_\lambda}$ in $M_\R$ obtained from the GZ polytope $\Delta^n_\lambda$ by chopping the singular stratum $f_{01}$. That is, the chopped polytope $\widehat{\Delta}$ is determined by the intersection of $\Delta^n_\lambda$ and $u_{2} - \varepsilon \geq 0$ for a positive real number $\varepsilon$ sufficiently close to $0$. Then $\widehat{\mcal{X}}_0$ is a smooth toric variety, the toric blowup along the singular stratum $\Psi_{1} (F_{01})$ where $\Psi_{1} := \Psi_{1,0}$. Let us keep in mind the following diagram throughout this section$\colon$ 
\begin{equation}\label{equ_toricdegandresolu}
\xymatrix{
& (\widehat{\mcal{X}}_0, \widehat{L}_0) \ar[d]^{\Pi} \\
(\mcal{X}_1, L_1) \ar[r]_{\Psi_{1}} & (\mcal{X}_0, L_0)
}
\end{equation}
where $\Pi$ is the toric blow-up map. The inverse image of the interior $\mathring{\Delta}^n_\lambda$ of the GZ polytope ${\Delta}^n_\lambda$ can be viewed as the cotangent bundle of $L_1$ topologically. 

\begin{remark}
Toric degenerations had been employed to compute disk potentials in \cite{NishinouNoharaUeda} when the central toric varieties admit small toric resolutions and each algebraic variety of the family is Fano. We emphasize that our situation does \emph{not} fit into their situation. 
\end{remark}

The first lemma asserts that every holomorphic disk in a \emph{non-trivial} class must intersect the inverse image of a lower dimensional stratum of ${\Delta}^n_\lambda$. 

\begin{lemma}\label{lemma_maslovinter}
Every pseudo-holomorphic disk $\varphi_\beta$ in a non-trivial class  $\beta \in \pi_2(\mcal{X}_1, L_1)$ must intersect the inverse image of $\Delta_\lambda^n - \mathring{\Delta}^n_\lambda$.
\end{lemma}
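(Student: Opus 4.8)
The plan is to argue by contradiction. Suppose $\varphi_\beta$ is a holomorphic disk in a non-trivial class $\beta \in \pi_2(\mcal{X}_1, L_1)$ whose image is entirely contained in the inverse image of the open interior $\mathring{\Delta}^n_\lambda$. Under the gradient-Hamiltonian symplectomorphism $\Psi_1 \colon \mcal{X}_1 \to \mcal{X}_0$ of Proposition~\ref{proposition_gradientflow}, which restricts to a symplectomorphism on $\mathring{\mcal{X}}_0$ (equivalently on the preimage of $\mathring{\Delta}^n_\lambda$), the inverse image of the open simplex is identified symplectically with the corresponding open subset of the central toric fiber $\mcal{X}_0$, and topologically with the cotangent bundle $T^*L_1$ as noted after~\eqref{equ_toricdegandresolu}.

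The key step is then to observe that $T^*L_1$, or equivalently the inverse image of $\mathring{\Delta}^n_\lambda$, is an \emph{exact} symplectic manifold: the restriction of $\omega^n_\lambda$ is exact since the open dense part of $\mcal{X}_0$ is the algebraic torus $(\C^*)^n$ with its standard (exact away from the divisors) Kähler form, and $L_1$ is the zero section, i.e. a Lagrangian torus on which the primitive one-form vanishes identically (it is a torus orbit sitting over an interior point). Hence any holomorphic disk with boundary on $L_1$ and image in this exact piece would have symplectic area $\int_{\mathbb{D}^2}\varphi_\beta^*\omega = \int_{\partial\mathbb{D}^2}\varphi_\beta^*\theta = 0$, where $\theta$ is a primitive with $\theta|_{L_1} = 0$. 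By positivity of area for non-constant holomorphic curves, $\varphi_\beta$ must be constant, so $\beta$ is the trivial class, contradicting our hypothesis. Therefore $\varphi_\beta$ must meet the inverse image of $\Delta^n_\lambda - \mathring{\Delta}^n_\lambda$.

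I expect the main obstacle to be the rigorous identification of the inverse image of $\mathring{\Delta}^n_\lambda$ as an exact symplectic manifold with $L_1$ an exact Lagrangian, and in particular making sure the primitive one-form restricts to zero on $L_1$ rather than merely being closed there. This requires care because a priori $L_1$ only sits on a level set of a completely integrable system; one uses that on the preimage of the interior the GZ system coincides (via $\Psi_1$) with the toric moment map~\eqref{equ_toricmoment}, whose restriction to $(\C^*)^n$ admits action-angle coordinates in which the torus orbit over an interior point bounds no area. An alternative, perhaps cleaner, route is to avoid exactness entirely and argue directly on homotopy classes: a disk contained in the preimage of $\mathring{\Delta}^n_\lambda \cong T^*L_1$ represents a class in $\pi_2(T^*L_1, L_1) = \pi_2(L_1) = 0$, so $\beta$ is trivial already as a homotopy class; one must then only check that the inclusion $\pi_2(\text{preimage of }\mathring{\Delta}^n_\lambda, L_1) \to \pi_2(\mcal{X}_1, L_1)$ kills $\beta$, which follows from the homotopy equivalence with the cotangent bundle. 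I would present the homotopy-theoretic argument as the main line and mention the area argument as a remark.
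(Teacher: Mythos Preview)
Your proposal is correct, and both routes you outline lead to the result; however, the paper argues differently. The paper's proof invokes Lemma~\ref{prop_Maslovzero} (Cho): since $L_1$ is a torus its tangent bundle is trivial, so any smooth disk in $(T^*L_1, o_{L_1})$ has Maslov index zero. Combined with the monotonicity of $L_1$ established in Proposition~\ref{cor_monotonetorus}, this forces the symplectic area to vanish, and positivity of area then gives the contradiction. Thus the paper passes through the Maslov index and uses monotonicity as the bridge to area, whereas you bypass both: your exactness argument reaches ``area $=0$'' directly from Stokes, and your homotopy argument ($\pi_2(T^*L_1,L_1)=0$ since the zero section inclusion is a homotopy equivalence and $\pi_2(T^n)=0$) dispenses with area altogether. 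Your homotopy route is the most elementary of the three and does not rely on monotonicity, which is an advantage; the paper's route, on the other hand, stays within the Maslov-index/monotonicity framework that drives the rest of Section~\ref{sect_effectivediscclasses}, so it is thematically consistent. Your worry about the primitive one-form is easily resolved by choosing action--angle coordinates centered at $\mathbf{u}_0$, so that the Liouville form $\sum (u_i-(\mathbf{u}_0)_i)\,d\theta_i$ vanishes on $L_1$; but as you say, the homotopy argument makes this issue moot.
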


Let us recall one fact on the Maslov index of a map into a cotangent bundle.

\begin{lemma}[Lemma 9.2 in \cite{Cho}]\label{prop_Maslovzero}
Let $L$ be a Lagrangian submanifold such that its tangent bundle $TL$ is trivial. Let $o_L$ be the zero section of the cotangent bundle $T^*L$.
Then every smooth map from a boarded Riemann surface $(\Sigma, \partial \Sigma)$ into $(T^*L, o_L)$ has the Maslov index zero. 
\end{lemma}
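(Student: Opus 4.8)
The plan is to prove Lemma~\ref{prop_Maslovzero} by a direct computation of the Maslov index using the symplectic trivialization of $T^*L$ afforded by the triviality of $TL$. First I would recall that the Maslov index of a map $u \colon (\Sigma, \partial\Sigma) \to (T^*L, o_L)$ depends only on the homotopy class of the bundle pair $(u^*T(T^*L), (u|_{\partial\Sigma})^* To_L)$ over $(\Sigma, \partial\Sigma)$, so one may replace $u$ by any convenient representative of its class and work entirely with the induced loop of Lagrangian subspaces along $\partial\Sigma$.

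The key step is the trivialization. Since $TL$ is trivial, fix a global frame; this gives a canonical identification $T^*L \cong L \times \R^n$ of vector bundles, and the induced identification of the cotangent bundle with the product respects the canonical symplectic structure, so we get a symplectomorphism (near the zero section, which is all that matters for the Maslov index) $T^*L \cong L \times (\R^n \times (\R^n)^*) \cong L \times \C^n$ under which $To_L$ at a point $x \in o_L$ is the fixed real subspace $\R^n \times \{0\} \subset \C^n$. Pulling back by $u$, the bundle pair $(u^*T(T^*L), (u|_{\partial\Sigma})^*To_L)$ becomes the \emph{constant} bundle pair $(\Sigma \times \C^n, \partial\Sigma \times \R^n)$, whose Maslov index is zero by definition of the Maslov index for trivial bundle pairs (the boundary loop of Lagrangians is constant). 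Hence $\mu(u) = 0$.

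The main obstacle — really the only subtle point — is verifying that the vector-bundle trivialization $T^*L \cong L \times \R^n$ can be promoted to a \emph{symplectic} trivialization of a neighborhood of the zero section carrying $o_L$ to $L \times \R^n \times \{0\}$; this is where one must be slightly careful, since an arbitrary choice of frame for $TL$ induces the tautological identification $T^*L \cong L \times (\R^n)^*$ as vector bundles but one should check the resulting almost complex (or symplectic) structure on the total space is homotopic through the relevant structures to the product one. This follows because the space of compatible almost complex structures on a symplectic vector bundle is contractible, so the Maslov index is unchanged when we deform to the split structure; alternatively one invokes that $T(T^*L)|_{o_L} \cong \pi^*(TL \oplus T^*L)$ canonically and uses the chosen frame to trivialize each summand. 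With that in hand the computation above goes through verbatim, and I would simply cite \cite{Cho} for the details rather than reproduce them.
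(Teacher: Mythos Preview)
Your argument is correct and is the standard one: trivialize $T(T^*L)|_{o_L}\cong TL\oplus T^*L$ using the given global frame for $TL$, observe that under this symplectic trivialization the Lagrangian subbundle $To_L$ becomes the constant real subspace $\R^n\subset\C^n$, and conclude that the boundary loop of Lagrangians is constant, hence has Maslov index zero. Your care about promoting the vector-bundle trivialization to a symplectic one is appropriate, and your resolution via $T(T^*L)|_{o_L}\cong\pi^*(TL\oplus T^*L)$ with the canonical symplectic pairing is the cleanest way to handle it.

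There is nothing to compare against here: the paper does not give its own proof of this lemma but simply records it as Lemma~9.2 of \cite{Cho} and uses it as a black box in the proof of Lemma~\ref{lemma_maslovinter}. Your sketch is essentially what one finds in \cite{Cho}, so you may safely replace the whole argument with the citation, as the paper does.
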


\begin{proof}[Proof of Lemma~\ref{lemma_maslovinter}]
Suppose that $\varphi_\beta$ is contained in the cotangent bundle $T^*L_1$. By Lemma~\ref{prop_Maslovzero}, 
the Maslov index of $\beta$ is zero. Then the monotonicity of $L_1$ yields that the class $\beta$ must be trivial because of the positivity of area of pseudo-holomorphic curves. 
\end{proof}

Consider the long exact sequences of homotopy groups and homology groups related by the Hurewicz homomorphisms.
As every Fano manifold is simply connected, the short five lemma yields that the Hurewicz homomorphism is an isomorphism $\pi_2(\mcal{X}_1, L_1) \simeq H_2(\mcal{X}_1, L_1; \Z)$.
\begin{equation}\label{equ_twoexactsequencess}
\xymatrix{
\pi_2 (L_1) \simeq 0 \ar[r] \ar[d]_{0} & \pi_2 (\mcal{X}_1) \ar[r] \ar[d]_{\simeq} & \pi_2 (\mcal{X}_1, L_1) \ar[r] \ar[d] & \pi_1 (L_1) \ar[r] \ar[d]_{\simeq} & 0 \\
H_2 (L_1; \Z) \ar[r] & H_2 (\mcal{X}_1; \Z) \ar[r] &
H_2 (\mcal{X}_1, L_1; \Z) \ar[r] & H_1 (L_1; \Z) \ar[r] & 0}
\end{equation}
From now on, the above groups will be used interchangeably.

Also, the 2nd relative homology group of every toric fiber in the toric orbifold $\mcal{X}_0$ is generated by the holomorphic disks emanated from the toric divisor over the field $\mathbb{Q}$ of rational numbers. (To generates the 2nd relative homology group over $\mathbb{Z}$, in addition to those holomorphic disks, orbi-disks are necessary in general, see \cite[Section 9.1]{ChoPoddar} for more details.) 
For this reason, we prefer using the coefficient field $\mathbb{Q}$ for homology groups$\colon$
$$
\pi_2(\mcal{X}_1, L_1) \otimes \Q \simeq H_2(\mcal{X}_1, L_1; \Z)  \otimes \Q \simeq H_2(\mcal{X}_1, L_1; \Q)
$$ 
by the universal coefficient theorem.

We analyze the homology groups via the map $\Psi_1 := \Psi_{1,0}$.

\begin{lemma}\label{lemma_gradientdiscflow}
The induced homomorphism 
$$
\begin{cases}
\Psi_{1,*} \colon H_2 (\mcal{X}_1, L_1; \Q) \simeq \Q^{4} \to H_2 (\mcal{X}_0, L_0; \Q) \simeq \Q^{3} \mbox{ is an epimorphism} &\mbox{if $n = 2$,} \\
\Psi_{1,*} \colon H_2 (\mcal{X}_1, L_1; \Q) \simeq \Q^{n+1} \to H_2 (\mcal{X}_0, L_0; \Q) \simeq \Q^{n+1} \mbox{ is an isomorphism} &\mbox{if $n \geq 3$.}
\end{cases}
$$ 
\end{lemma}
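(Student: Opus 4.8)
The plan is to compute both sides of the map $\Psi_{1,*}$ explicitly and then identify the morphism by tracking generators through the gradient Hamiltonian flow. First I would record the rank of the target: $\mcal{X}_0$ is a toric orbifold whose polytope is the GZ simplex $\Delta^n_\lambda$, which has $n+1$ facets (the facet $f_0$ supported by $u_1+u_2=0$ and the facets $f_1,\dots,f_n$ supported by $u_{i+1}-u_i=0$). By the classification of holomorphic disks of Maslov index two bounded by a toric fiber in a toric orbifold (Cho--Poddar, \cite{ChoPoddar}), the group $H_2(\mcal{X}_0,L_0;\Q)$ is freely generated over $\Q$ by the basic disk classes, one per facet, so it has rank $n+1$ regardless of parity of $n$. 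For the source, I would invoke the computation already made inside the proof of Proposition~\ref{cor_monotonetorus}: when $n\ge 3$, the long exact sequence~\eqref{equ_twoexactsequencess} together with Lemma~\ref{lemma_pi2} gives $\pi_2(\mcal{X}_1,L_1)\simeq\Z^{n+1}$, hence rank $n+1$ after $\otimes\Q$; when $n=2$, the extra spherical class $[S^2]$ appears and the rank is $4$.

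Next I would exhibit the map concretely on generators. By Proposition~\ref{proposition_gradientflow}, $\Psi_1=\Psi_{1,0}$ is a symplectomorphism on $\mathring{\mcal{X}}_1=\Psi_{1,0}^{-1}(\mathring{\mcal{X}}_0)$ and intertwines the two integrable systems over $\Delta^n_\lambda$. The gradient holomorphic disks $\varphi_0,\varphi_1,\dots,\varphi_n$ constructed in the proof of Proposition~\ref{cor_monotonetorus} intersect the inverse images of the relative interiors of the facets $f_0,f_1,\dots,f_n$ once each, and these interiors lie in the smooth locus $\mathring{\mcal{X}}_0$; so $\Psi_{1,*}$ carries the basis $\{\beta_0,\dots,\beta_n\}$ of $H_2(\mcal{X}_1,L_1;\Q)$ (see~\eqref{equ_betaivarphi}) to the basic disk classes of the corresponding facets of $\Delta^n_\lambda$ in $H_2(\mcal{X}_0,L_0;\Q)$. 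Since these are exactly the $n+1$ free generators of the target, $\Psi_{1,*}$ sends a basis to a basis in the case $n\ge 3$, hence is an isomorphism. When $n=2$, the same argument shows the three classes $\beta_0,\beta_1,\beta_2$ map onto the three basic disk classes of the target, so $\Psi_{1,*}$ is surjective; it cannot be injective since the source has rank $4$ (the fourth generator $[S^2]$, supported over the codimension-two singular stratum $u_1=u_2=0$, which is collapsed by $\Psi_1$, maps to zero).

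The main obstacle I anticipate is justifying that $\Psi_{1,*}$ is genuinely well-defined on relative homology and that it really sends $\beta_i$ to the class of the basic disk rather than to some other integral combination. The delicate point is that $\Psi_1$ is only a symplectomorphism away from the singular locus of $\mcal{X}_0$, so one must check that a representative of $\beta_i$ can be pushed off the preimage of $\mathrm{Sing}(\mcal{X}_0)$ — which is possible precisely because $\varphi_i$ meets only the relative interior of a facet, a smooth stratum — and then that the continuous extension $\Psi_{1,0}$ maps it to a disk meeting the corresponding toric divisor of $\mcal{X}_0$ transversally once. This is where one would cite Proposition~\ref{proposition_gradientflow} (the commuting triangle with $\Delta^n_\lambda$) and the topological identification of $\Pi^{-1}$ of the interior of $\Delta^n_\lambda$ with $T^*L_1$, noting that any two disks meeting the same facet once differ by a class in the kernel, i.e.\ a class supported over the lower strata, which for $n\ge 3$ is trivial by the monotonicity/Maslov-index argument of Lemma~\ref{lemma_maslovinter} and Lemma~\ref{prop_Maslovzero}. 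Once the generator-to-generator correspondence is pinned down, the isomorphism (resp. epimorphism) statement is immediate from the rank count.
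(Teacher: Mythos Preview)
Your proposal is correct and follows essentially the same approach as the paper's own proof: compute the ranks of source and target via the long exact sequence and the facet count of the GZ simplex, then track the gradient disk classes $\beta_0,\dots,\beta_n$ through $\Psi_{1}$ using that it is a symplectomorphism away from the singular stratum over $f_{01}$, and handle $n=2$ by observing the Lagrangian sphere over the origin collapses. The anticipated obstacle you flag is exactly what the paper disposes of in one line by noting the gradient disks lie entirely in the complement of $F_{01}$, so no additional work is required beyond what you have already outlined.
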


\begin{proof}
We first check the statement when $n \geq 3$. Since $\Delta^n_\lambda$ is a simplex with $n+1$ facets, $\mcal{X}_0$ is a toric orbifold with $H_2(\mcal{X}_0, L_0; \Q) \simeq \Q^{n+1}$.  From the fact that $\mcal{X}_1$ is simply connected and Lemma~\ref{lemma_pi2}, it follows that $H_1(\mcal{X}_1; \Z) = 0$ and  $H_2(\mcal{X}_1; \Z) = \Z$. Then the long exact sequence $\mbox{\eqref{equ_twoexactsequencess}}\otimes \Q$ of homology groups yields $H_2(\mcal{X}_1, L_1; \Q) \simeq \Q^{n+1}$. Thus, it suffices to verify that a basis for $H_2(\mcal{X}_1, L_1; \Q)$ sends to that for $H_2(\mcal{X}_0, L_0; \Q)$ via $\Psi_{1,*}$. The map $\Psi_1$ is a symplectomorphism over the complement of $F_{01}$. The complement contains \emph{all} gradient disks $\varphi_0, \varphi_1, \dots, \varphi_n$ in Proposition~\ref{cor_monotonetorus}, emanating from the inverse image of a facet. Under the map $\Psi_1$, those disks map into disks intersecting a toric divisor, preserving the intersection numbers between disks and divisors. Those disks generate $H_2(\mcal{X}_0, L_0; \Q)$. It completes the proof of Lemma~\ref{lemma_gradientdiscflow}.

In this case where $\mcal{X}_1 \simeq \CP^1 \times \CP^1$, the fiber $(\Phi^2_{\lambda})^{-1}(\mathbf{0})$ over the origin is a Lagrangian sphere, which collapses to the orbifold point through the map $\Psi_1$. Hence, the class represented by the Lagrangian sphere maps into the trivial class via $\Psi_{1,*}$. Again by analyzing three disks emanating from a facet, we have verified that the map $\Psi_{1,*} \colon H_2 (\mcal{X}_1, L_1; \Q) \simeq \Q^4 \to H_2 (\mcal{X}_0, L_0; \Q) \simeq \Q^3$ is surjective. 
\end{proof}

Consider the toric blow-up $\widehat{\mcal{X}}_0$ of $\mcal{X}_0$ along the singular stratum over $f_{01}$ in~\eqref{equ_toricdegandresolu}. The fan $\widehat{\Sigma}= \Sigma^*(v)$ associated to $\widehat{\mcal{X}}_0$ has $(n+2)$ one cones, i.e. the toric anti-canonical divisor has $(n+2)$ irreducible components. Let us denote by $\widehat{F}_{01}$ the exceptional divisor of $\widehat{\mcal{X}}_0$, which contracts to the singular stratum under the toric blow-up map $\Pi$ in~\eqref{equ_toricdegandresolu}. Also, we denote by $\widehat{F}_{i}$ the strict transform of ${F}_{i}$ in $\widehat{\mcal{X}}_0$ for $i = 0, 1, \dots, n$. 

We equip $\widehat{\mcal{X}}_0$ with a torus-invariant symplectic form associated to the polytope $\widehat{\Delta}$ obtained by chopping the codimension two stratum $f_{01}$ small enough. As the map $\Pi$ is torus-equivariant, $\Pi$ provides a correspondence between the toric fibers of $\widehat{\mcal{X}}_0$ and the toric fibers of ${\mcal{X}}_0$. We denote by $\widehat{L}_0$ by the lift of $L_0$ via $\Pi$. Although the map does \emph{not} preserve the symplectic areas of holomorphic disks bounded by $L_0$ and $\widehat{L}_0$, we will be only concerned with topological invariants such as Maslov indices and thus $\Pi$ is good enough for our purpose.

According to the classification of holomorphic disks bounded by toric fibers by Cho--Oh \cite{ChoOh}, $\widehat{L}_0$ bounds $(n+2)$ many holomorphic disks without sphere bubbles (which are often called \emph{basic disks}) of Maslov index two. Each holomorphic disk of Maslov index two intersects one of the irreducible components of the toric anti-canonical divisor. Let us denote by $\widehat{\beta}_\bullet$ the class represented by a holomorphic disk intersecting $\widehat{F}_\bullet$ for $\bullet = 01, 0, 1, \dots, n$. Those classes generate $H_2 (\widehat{\mcal{X}}_0, \widehat{L}_0; \Z) \simeq \Z^{n+2}$ as $\widehat{\Sigma}$ has $(n+2)$ one cones, see \cite[(1.7)]{FOOOToric2}. Thus, because of the discrepancy on the dimensions of the domain and the codomain, $\Pi_* $ is \emph{not} injective.

Note that the class $\widehat{\beta}_1 + \widehat{\beta}_0 - 2 \widehat{\beta}_{01} \in H_2 (\widehat{\mcal{X}}_0, \widehat{L}_0; \Z)$ maps down to the trivial class in $H_2 ({\mcal{X}}_0, L_0; \Z)$ along $\Pi_*$. Also, its Maslov index is zero since the Maslov indices of $\widehat{\beta}_{1}, \widehat{\beta}_{0}$, and $\widehat{\beta}_{01}$ are all equal to two. Consider the quotient group $\overline{H}_2 (\widehat{\mcal{X}}_0, \widehat{L}_0; \Z)$ of $H_2 (\widehat{\mcal{X}}_0, \widehat{L}_0; \Z)$ by the subgroup generated by $\widehat{\beta}_1 + \widehat{\beta}_0 - 2 \widehat{\beta}_{10}$. That is,
$$
\overline{H}_2 (\widehat{\mcal{X}}_0, \widehat{L}_0; \Z) = H_2 (\widehat{\mcal{X}}_0, \widehat{L}_0; \Z) / \langle \widehat{\beta}_1 + \widehat{\beta}_0 - 2 \widehat{\beta}_{10} \rangle.
$$
Then the induced map $\Pi_*$ factors through $\overline{H}_2 (\widehat{\mcal{X}}_0, \widehat{L}_0; \Z) \colon$
$$
\xymatrix{
{H}_2 (\widehat{\mcal{X}}_0, \widehat{L}_0; \Z) \ar[dr] \ar[rr]^{\Pi_*}  & & {H}_2 ({\mcal{X}}_0, {L}_0; \Z) \\
& \overline{H}_2 (\widehat{\mcal{X}}_0, \widehat{L}_0; \Z) \ar[ur]_{\overline{\Pi}_*}& 
}
$$
By the universal coefficient theorem, we then obtain an isomorphism over $\Q \colon$
\begin{equation}\label{equ_projectionblowup}
\overline{\Pi}_*:= \overline{\Pi}_* \otimes \Q \colon  \overline{H}_2 (\widehat{\mcal{X}}_0, \widehat{L}_0; \Q) \to H_2 (\mcal{X}_0, L_0; \Q).
\end{equation}
Here, by abuse of notation, the induced map over $\Q$ will be denoted by $\overline{\Pi}_*$.

The Maslov homomorphism descends from ${H}_2 (\widehat{\mcal{X}}_0, \widehat{L}_0; \Z)$ to $\overline{H}_2 (\widehat{\mcal{X}}_0, \widehat{L}_0; \Z)$ since the Maslov index of $\widehat{\beta}_1 + \widehat{\beta}_0 - 2 \widehat{\beta}_{01}$ is zero. In other words, any representatives of a class in $\overline{H}_2 (\widehat{\mcal{X}}_0, \widehat{L}_0; \Z)$ have the same Maslov index. Also, the Maslov homomorphism over $\Q$ can be formally defined by extending the Maslov homomorphism over $\Z$ linearly. Moreover, the map~\eqref{equ_projectionblowup} preserves the Maslov index because the Maslov indices of $\widehat{\beta}_i$ and $\overline{\Pi}_* (\widehat{\beta}_i)$ are equal for $i = 0, 1, \dots, n$.

If $n \geq 3$, then let $\widehat{\Psi}_{1,*}$ be the composition $\overline{\Pi}^{-1}_* \circ \Psi_{1,*}$ and hence 
\begin{equation}\label{equ_compositionmap}
\widehat{\Psi}_{1,*} \colon H_2 (\mcal{X}_1, L_1; \Q) \to H_2 (\mcal{X}_0, L_0; \Q) \to  \overline{H}_2 (\widehat{\mcal{X}}_0, \widehat{L}_0; \Q)
\end{equation}
is an isomorphism by Lemma~\ref{lemma_gradientdiscflow}. 
When $n = 2$, the composition~\eqref{equ_compositionmap} is an epimorphism. In this case, the Lagrangian $(\Phi^2_\lambda)^{-1}(\mathbf{0})$ vanishes. Since the Chern number of the vanishing cycle is zero, the composition still commutes with the Maslov homomorphism.

\begin{lemma}\label{lemma_3maslovcorr}
For every $\beta \in H_2(\mcal{X}_1, L_1; \Q)$, the Maslov index of $\beta$ is equal to that of $\widehat{\Psi}_{1,*} (\beta)$.
\end{lemma}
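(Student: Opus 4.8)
The plan is to trace the Maslov index through the two maps $\Psi_{1,*}$ and $\overline{\Pi}_*^{-1}$ that compose to give $\widehat{\Psi}_{1,*}$, showing that each of them is compatible with the Maslov homomorphism, so that the composition is too. Since $\widehat{\Psi}_{1,*}$ is already known (from Lemma~\ref{lemma_gradientdiscflow} and the discussion following it) to be an isomorphism for $n \geq 3$ and an epimorphism for $n = 2$, and since every homomorphism here is $\Q$-linear, it suffices to check the Maslov compatibility on a spanning set of $H_2(\mcal{X}_1, L_1; \Q)$. The natural spanning set is $\{\beta_0, \beta_1, \dots, \beta_n\}$ (together with $[S^2]$ when $n = 2$), the classes of the gradient holomorphic disks $\varphi_i$ produced in Proposition~\ref{cor_monotonetorus}.

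First I would recall that $\Psi_1 = \Psi_{1,0}$ is a symplectomorphism on the complement of $F_{01}$, which in particular contains every gradient disk $\varphi_i$; hence $\Psi_{1,*}[\varphi_i] = [\Psi_1 \circ \varphi_i]$ is represented by an honest holomorphic (indeed, in $\mcal{X}_0$, toric-divisor-intersecting) disk, and a symplectomorphism carries a bounding Lagrangian to a bounding Lagrangian preserving the relative tangent-bundle data, so $\mu(\Psi_{1,*}\beta_i) = \mu(\beta_i) = 2$. When $n = 2$, $\Psi_{1,*}[S^2]$ is the trivial class (the Lagrangian sphere collapses), whose Maslov index is $0$, matching $\mu([S^2]) = 0$. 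Thus $\Psi_{1,*}$ preserves the Maslov index on the chosen spanning set and, by $\Q$-linearity, everywhere on its domain. Second, for $\overline{\Pi}_*$: the paragraph preceding the lemma already records that $\mu(\widehat{\beta}_i) = \mu(\overline{\Pi}_*(\widehat{\beta}_i)) = 2$ for $i = 0, 1, \dots, n$, and that the classes $\widehat{\beta}_0, \dots, \widehat{\beta}_n$ span $\overline{H}_2(\widehat{\mcal{X}}_0, \widehat{L}_0; \Q)$ (the relation $\widehat{\beta}_1 + \widehat{\beta}_0 - 2\widehat{\beta}_{01}$ having been quotiented out, which is legitimate precisely because it has Maslov index $0$); by $\Q$-linearity, $\overline{\Pi}_*$ preserves the Maslov index, and so does its inverse $\overline{\Pi}_*^{-1}$.

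Combining the two, $\widehat{\Psi}_{1,*} = \overline{\Pi}_*^{-1} \circ \Psi_{1,*}$ preserves the Maslov index: for $\beta \in H_2(\mcal{X}_1, L_1; \Q)$ we have $\mu(\widehat{\Psi}_{1,*}\beta) = \mu(\overline{\Pi}_*^{-1}(\Psi_{1,*}\beta)) = \mu(\Psi_{1,*}\beta) = \mu(\beta)$, which is the claim. I do not anticipate a genuine obstacle here; the one point requiring a little care is that the Maslov homomorphism on the quotient $\overline{H}_2(\widehat{\mcal{X}}_0, \widehat{L}_0; \Z)$ and its $\Q$-linear extension are well defined, but this is exactly what the text just before the lemma established (the kernel relation has Maslov index zero). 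The only other subtlety is the $n = 2$ case, where $\widehat{\Psi}_{1,*}$ is merely surjective rather than bijective; but checking Maslov compatibility on a spanning set of the \emph{source} $H_2(\mcal{X}_1, L_1; \Q) \simeq \Q^4$ — namely $\beta_0, \beta_1, \beta_2, [S^2]$ — still suffices, since the statement quantifies over $\beta$ in the source, not the target.
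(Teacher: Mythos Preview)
Your proposal is correct and follows essentially the same approach as the paper: both check that the Maslov index is preserved on the generating set $\{\beta_0,\dots,\beta_n\}$ (together with the vanishing sphere class when $n=2$) and extend by $\Q$-linearity. The only cosmetic difference is that the paper argues directly that $\widehat{\Psi}_{1,*}(\beta_i)=\widehat{\beta}_i$ via the strict transform of the toric disk in $\Psi_{1,*}(\beta_i)$, whereas you factor the computation through the two maps $\Psi_{1,*}$ and $\overline{\Pi}_*^{-1}$ separately; the content is the same.
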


\begin{proof}
For $n \geq 3$, the map~\eqref{equ_compositionmap} is an isomorphism. It suffices to show that the Maslov indices of the classes which form a basis for $H_2 (\mcal{X}_1, L_1; \Q)$ are preserved through~\eqref{equ_compositionmap}. For each $i = 0, 1, \dots, n$, we take a holomorphic disk in $\Psi_{1,*}(\beta_i)$ intersecting the toric divisor over the facet $F_i$ where $\beta_i$ is in~\eqref{equ_betaivarphi}. Consider its strict transformation, which is in the class $\widehat{\beta}_i$ in ${H}_2 (\widehat{\mcal{X}}_0, \widehat{L}_0; \Q)$. The Maslov index of $\beta_i$ is equal to that of $\widehat{\beta}_i$. Since $\widehat{\beta}_i = \widehat{\Psi}_{1,*}(\beta_i)$ in $ \overline{H}_2 (\widehat{\mcal{X}}_0, \widehat{L}_0; \Q)$, the Maslov indices are preserved via the map~\eqref{equ_compositionmap}. 

When $n = 2$, the vanishing cycle (the Lagrangian two sphere $(\Phi^2_{\lambda^\prime})^{-1}(\mathbf{0})$) has the Chern number zero. The same argument for the disks intersecting the inverse image of a facet verifies Lemma~\ref{lemma_3maslovcorr}.
\end{proof}

\begin{corollary}\label{corollary_twodiscs}
For $n \geq 2$, suppose that two classes $\beta$ and $\beta^\prime \in H_2(\mcal{X}_1, L_1; \Q)$ satisfy $\Psi_{1,*}(\beta) = \Psi_{1,*}(\beta^\prime)$. Then $\pa \beta = \pa \beta^\prime$ in $H_1(L_1; \Q)$.
\end{corollary}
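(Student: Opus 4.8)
The plan is to deduce the statement from the naturality of the connecting homomorphism in the long exact sequence of a pair, together with the observation that $\Psi_1 := \Psi_{1,0}$ restricts to a diffeomorphism on $L_1$. Since $L_0 = \Psi_1(L_1)$ by definition, the map $\Psi_1 \colon \mcal{X}_1 \to \mcal{X}_0$ is a map of pairs $(\mcal{X}_1, L_1) \to (\mcal{X}_0, L_0)$, so the square
$$
\xymatrix{
H_2(\mcal{X}_1, L_1; \Q) \ar[r]^{\Psi_{1,*}} \ar[d]_{\partial} & H_2(\mcal{X}_0, L_0; \Q) \ar[d]^{\partial} \\
H_1(L_1; \Q) \ar[r]^{\Psi_{1,*}} & H_1(L_0; \Q)
}
$$
commutes by naturality of $\partial$.

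First I would check that the bottom horizontal arrow is an isomorphism. The center $\mathbf{u}_0 = (0, 1, \dots, n-1)$ lies in the interior of $\Delta^n_\lambda$ and has $u_2 = 1 \neq 0$, hence is disjoint from the stratum $f_{01}$ over which $\mathrm{Sing}(\mcal{X}_0)$ sits. Therefore $L_1 = (\Phi^n_\lambda)^{-1}(\mathbf{u}_0) \subset \mathring{\mcal{X}}_1 = \Psi_{1,0}^{-1}(\mathring{\mcal{X}}_0)$, on which $\Psi_1$ is a symplectomorphism by Proposition~\ref{proposition_gradientflow}. In particular $\Psi_1|_{L_1} \colon L_1 \to L_0$ is a diffeomorphism, so it induces an isomorphism on $H_1(-;\Q)$.

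Then, given $\beta, \beta' \in H_2(\mcal{X}_1, L_1; \Q)$ with $\Psi_{1,*}(\beta) = \Psi_{1,*}(\beta')$, applying $\partial$ and using commutativity of the square gives $\Psi_{1,*}(\partial\beta) = \Psi_{1,*}(\partial\beta')$ in $H_1(L_0; \Q)$, and injectivity of the bottom arrow yields $\partial\beta = \partial\beta'$. I would remark that this argument is uniform in $n \geq 2$: although $\Psi_{1,*}$ on the relative second homology groups is only an epimorphism when $n = 2$ (its kernel being spanned by the vanishing sphere class, which maps to $0$ under $\partial$ as it lies in the image of $\pi_2(\mcal{X}_1)$), the naturality square handles that case without change. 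I do not anticipate any real obstacle; the only point requiring care is the verification that $L_1$ lies in the locus where $\Psi_1$ is a diffeomorphism, which is immediate from the position of $\mathbf{u}_0$ relative to the singular stratum $f_{01}$.
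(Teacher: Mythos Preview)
Your proof is correct and follows essentially the same approach as the paper: both use naturality of the connecting homomorphism for the map of pairs $\Psi_1 \colon (\mcal{X}_1, L_1) \to (\mcal{X}_0, L_0)$ together with the fact that $\Psi_{1,*}$ is an isomorphism on $H_1(L_\bullet;\Q)$. You supply a bit more detail than the paper on why the bottom arrow is an isomorphism (locating $\mathbf{u}_0$ away from the singular stratum so that $\Psi_1|_{L_1}$ is a diffeomorphism), but the argument is the same.
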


\begin{proof}
Note that $\Psi_{1,*}(\beta) = \Psi_{1,*}(\beta^\prime)$ yields $\partial \Psi_{1,*}(\beta) =  \partial \Psi_{1,*}(\beta^\prime)$ in $H_1(L_0)$. The naturality of the long exact sequences yields the following diagram
$$
\xymatrix{
H_2 (\mcal{X}_1, L_1; \Q) \ar[r]^{\Psi_{1,*}} \ar[d]_\partial & H_2 (\mcal{X}_0, L_0; \Q) \ar[d]^\partial  \\
H_1 (L_1; \Q) \ar[r]_{ \Psi_{1,*}}  & H_1 (L_0; \Q), }
$$
which asserts that $\Psi_{1,*}(\partial \beta) =  \Psi_{1,*}(\partial \beta^\prime)$. Since $\Psi_{1,*}$ is an isomorphism on $H_1(L_\bullet; \Q)$, we obtain $\pa \beta = \pa \beta^\prime$ in $H_1(L_1; \Q)$.
\end{proof}

To analyze possible classes which can be realized as a \emph{limit} of holomorphic disks of Maslov index two, we need the following proposition which is derived from the classification result and the area formula of holomorphic curves bounded by a toric fiber in a symplectic toric orbifold.

\begin{proposition}[Section 6 $\&$ 7 in \cite{ChoPoddar}]\label{proposition_classification}
For $n \geq 2$, the symplectic area of a non-trivial class $\beta \in H_2(\mcal{X}_0, L_0; \Q)$ which can be represented by a holomorphic curve bounded by $L_0$ is greater than or equal to one.

In particular, if the moduli space of pseudo-holomorphic curves in a non-trivial class $\beta \in H_2(\mcal{X}_0, L_0; \Q)$ consists of multiple components, then the symplectic area of $\beta$ is strictly greater than one.
\end{proposition}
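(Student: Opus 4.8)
The statement concerns the symplectic toric orbifold $\mcal{X}_0$ together with its Lagrangian toric fiber $L_0$, so the plan is to reduce everything to the combinatorics of the GZ simplex $\Delta^n_\lambda$ via the Cho--Poddar classification. First I would invoke the description in \cite[Section 6]{ChoPoddar} of holomorphic curves bounded by a toric fiber: any such stable curve decomposes as a sum of basic holomorphic (orbi-)disks $\beta_{f}$ attached to the toric prime divisors together with holomorphic spheres, and its symplectic area equals the sum of the areas of these pieces. Since $\mcal{X}_0$ is monotone with monotonicity arranged so that $\lambda_1 = n$ and each facet of $\Delta^n_\lambda$ is at lattice distance $1$ (up to the orbifold identification), every basic disk class has area $\geq 1$, with equality precisely for the basic disks hitting a facet exactly once. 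A holomorphic sphere contributes an area that is a positive multiple of the minimal Chern number times the monotonicity constant, hence also $\geq 1$ — in fact $\geq n$ by the adjunction computation recalled in Section~\ref{sect_effectivediscclasses}. Therefore any non-trivial class $\beta$ represented by a holomorphic curve bounded by $L_0$ has a decomposition into pieces each of area $\geq 1$, and since at least one piece is present, $\omega(\beta) \geq 1$.

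For the second assertion I would argue by contrapositive: suppose $\omega(\beta) = 1$. Then the decomposition above can contain \emph{at most one} nonzero piece (two pieces would already force area $\geq 2$), and that piece must itself have area exactly $1$, hence must be a basic disk meeting a single facet once (a sphere would contribute area $\geq n \geq 2$). Consequently $\beta = \beta_{f_i}$ for some facet $f_i$, and the moduli space $\mcal{M}(\mcal{X}_0, L_0, \beta)$ of such curves is, by the Cho--Poddar description, the moduli space of a single basic disk, which is connected. Contrapositively, if the moduli space has multiple components then $\beta$ is not a single basic disk of area $1$, so $\omega(\beta) > 1$.

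The main subtlety — and the step I would treat most carefully — is the orbifold bookkeeping: $\mcal{X}_0$ has singular strata over $f_{01}$ (where $u_2 = 0$), so one must be careful that orbi-disks with nontrivial twisting could in principle have small area. Here the argument in \cite[Section 7]{ChoPoddar} giving the area formula for orbi-disks in terms of the defining inequalities of the labelled polytope is what guarantees that the relevant orbi-disk areas are still bounded below by the lattice distances to the facets; one should verify that for the particular polytope $\Delta^n_\lambda$ those distances are all $\geq 1$, which follows from the explicit inequalities \eqref{equ_GZpolytope}. Once this is in place, the two claims are immediate from additivity of area over the components of a stable curve.

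Explicacion de ideas: la clave es que en una orbifold toric monotona toda curva holomorfa se descompone en discos basicos y esferas, cada pieza de area $\geq 1$, y el area es aditiva; por tanto area $=1$ solo puede provenir de un unico disco basico que corta una faceta una vez, cuyo espacio de moduli es conexo.
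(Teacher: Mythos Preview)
Your overall strategy---bound the area of each irreducible piece from below and add---lands in the right place, but it differs from the paper's argument and two of your justifications are shaky.

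\medskip
\textbf{How the paper argues.} The paper does not decompose into geometric pieces at all. It invokes Cho--Poddar to write any holomorphic (orbi)disk class as $\beta=\sum_{j=0}^n c_j\beta_j$ with $c_j\ge 0$ and $2c_j\in\Z$ (the half-integers reflecting the $\Z/2$ isotropy along $f_{01}$), so that $\omega(\beta)=\sum c_j$. It then uses the purely lattice-theoretic constraint $\partial\beta=\sum c_j v_j\in N$, together with the fact that every entry of each facet normal $v_j$ lies in $\{0,\pm1\}$, to force $\sum c_j\ge 1$. For the second assertion the paper simply notes that a reducible curve has a disk component of area $\ge 1$ plus bubbles of positive area.

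\medskip
\textbf{Where your argument needs care.}
\begin{enumerate}
\item Your sphere bound ``$\ge n$ by adjunction'' quotes the minimal Chern number of $\mcal{Q}_n=\mcal{X}_1$, but the statement is about the toric orbifold $\mcal{X}_0$; adjunction in $\CP^{n+1}$ says nothing directly about curves in $\mcal{X}_0$. The bound is true, but it needs a toric computation in $\mcal{X}_0$ (solving $\sum c_j v_j=0$ gives minimal area $n$) or an argument through $\Psi_{1,*}$.
\item Your orbi-disk step (``verify lattice distances to the facets are all $\ge 1$'') is aimed at the wrong stratum: the twisted sector sits over the codimension-two face $f_{01}$, not a facet. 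What actually makes it work is that the unique nontrivial box element is $\tfrac12 v_0+\tfrac12 v_1$, so the basic orbi-disk is $\tfrac12(\beta_0+\beta_1)$ with area exactly $1$. The paper's lattice argument handles this and the half-integer issue simultaneously, without singling out the orbi-disk.
\item You read ``multiple components'' as disconnectedness of the moduli space; in context (see the paper's own proof and its use in the next lemma) it means the stable curve is reducible. Your contrapositive already proves the intended statement (area $=1$ forces a single irreducible piece), so this is a phrasing issue rather than a mathematical gap.
\end{enumerate}

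\medskip
In short: your route is viable once (1) and (2) are repaired, but the paper's lattice-constraint argument is shorter and sidesteps both the sphere and orbi-disk bookkeeping.
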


\begin{proof}
The face supported by $u_2 = 0$ of codimension two has the isotropy group $\mathbb{Z}/2\mathbb{Z}$. By the classification result of holomorphic curves bounded by a toric fiber in \cite[Theorem 6.2]{ChoPoddar}, a class represented by a holomorphic (orbi)disk is of the following form$\colon$
$$
\beta = \sum_{j=0}^n c_j \beta_j \quad (c_j \geq 0 \mbox{ and } 2 c_j \in \Z).
$$
where $\beta_j$ was defined in~\eqref{equ_betaivarphi}.
Since $\omega(\beta_j) = 1$ for each $j$ by~\eqref{equ_areadis}, we have
$$
\omega(\beta) = \sum_{j=0}^n c_j.
$$
For each $\beta_j$, each component $\partial \beta_j \in N$ is $0$, $1$, or $-1$.
Every non-trivial component of each $\partial \beta$ is in the lattice $N$ because the boundary of $\beta$ forms a loop.  
Therefore, the condition $\partial \beta \in N$ yields that $\omega(\beta) \geq 1$.

Suppose that a holomorphic curve in $\beta$ has a sphere or disk bubble. Then the configuration looks like a disk component together with (sphere or disk) bubbles. The disk component consumes the area at least one and bubbles consumes additional symplectic area. Thus, the area must be strictly greater than one.  
\end{proof}

\begin{lemma}\label{lemma_limitofholomorphicdiscs}
Suppose that $n \geq 2$.
If $\beta$ in $H_2(\mcal{X}_1, L_1)$ is represented by a pseudo-holomorphic map of Maslov index two, then there exists a holomorphic curve $\widehat{\varphi}_\beta$ bounded by $\widehat{L}_0$ such that $\Psi_{1,*} (\beta) = \overline{\Pi}_* ([\widehat{\varphi}_\beta])$.
\end{lemma}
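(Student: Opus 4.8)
The plan is to realize the holomorphic disk $\varphi_\beta$ in $\mcal{X}_1$ as a limit of a one-parameter family of holomorphic curves living in the fibers $\mcal{X}_{(t)}$ of the toric degeneration, and then pass to the central fiber $\mcal{X}_0$ (and its blow-up $\widehat{\mcal{X}}_0$) using the gradient Hamiltonian flow maps $\Psi_{t,s}$ from Proposition~\ref{proposition_gradientflow}. First I would fix a generic $J_1$-holomorphic representative $\varphi_\beta$ with $\mu(\beta)=2$ and transport the almost complex structures: since $\Psi_{t,s}$ is a symplectomorphism on the dense open set $\mathring{\mcal{X}}_t := \Psi_{t,0}^{-1}(\mathring{\mcal{X}}_0)$, one obtains a family $J_t$ on $\mcal{X}_{(t)}$ agreeing with the toric-invariant structure near $t=0$. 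Running Gromov compactness along $t\to 0$ for the images $\Psi_{t,1}(\varphi_\beta)$ yields a stable limiting curve $\varphi_0$ bounded by $L_0$ in $\mcal{X}_0$; this is the standard degeneration-to-the-toric-limit argument (cf. \cite{NishinouNoharaUeda, AurouxSpecial}). I would then lift $\varphi_0$ through the toric blow-up $\Pi$ to a holomorphic curve $\widehat{\varphi}_\beta$ bounded by $\widehat{L}_0$ — lifting holomorphic curves through a toric (hence birational, torus-equivariant) morphism is unobstructed once one knows the curve is not entirely contained in the exceptional locus, which holds because its boundary lies on the toric fiber $\widehat{L}_0$.

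The homological bookkeeping is the part that needs care but is essentially formal given the earlier lemmas. The class $[\Psi_{t,1}(\varphi_\beta)]$ is locally constant in $t$ on the open stratum, so its limit satisfies $[\varphi_0] = \Psi_{1,*}(\beta)$ in $H_2(\mcal{X}_0, L_0;\Q)$; this uses only that $\Psi_{1}$ is a symplectomorphism off $F_{01}$ together with the fact, recorded in \eqref{equ_twoexactsequencess}, that the relevant homology groups are torsion-free so no subtlety is lost in passing to $\Q$-coefficients. On the blow-up side, $[\widehat{\varphi}_\beta] \in H_2(\widehat{\mcal{X}}_0, \widehat{L}_0;\Q)$ maps to $[\varphi_0]$ under $\Pi_*$ by construction of the lift, hence to $\Psi_{1,*}(\beta)$, and therefore $\Psi_{1,*}(\beta) = \overline{\Pi}_*([\widehat{\varphi}_\beta])$ after projecting to $\overline{H}_2(\widehat{\mcal{X}}_0, \widehat{L}_0;\Q)$; here I invoke that $\overline{\Pi}_*$ in \eqref{equ_projectionblowup} is an isomorphism over $\Q$. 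Note that I do not need to control the Maslov index of $\widehat{\varphi}_\beta$ here — that matching is handled separately by Lemma~\ref{lemma_3maslovcorr} and Lemma~\ref{lemma_gradientdiscflow} — so the statement to be proved is purely about the existence of a representative with the prescribed homology class.

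The main obstacle is the compactness/limit step: a priori the Gromov limit $\varphi_0$ could degenerate badly, acquiring sphere bubbles in the singular locus $\mathrm{Sing}(\mcal{X}_0)$ or disk bubbles escaping to $F_{01}$, and one must argue that whatever nodal configuration arises is still a genuine holomorphic curve bounded by $L_0$ whose total class is $\Psi_{1,*}(\beta)$ (this is exactly why the statement allows a holomorphic \emph{curve} $\widehat{\varphi}_\beta$, not necessarily a smooth disk). This is controlled because $J_t$ is the toric-invariant structure near $t=0$, so the limit lives in the toric variety $\mcal{X}_0$ where the full classification of holomorphic curves bounded by a toric fiber (Proposition~\ref{proposition_classification}, i.e. \cite{ChoPoddar}) applies; in particular every component has positive area and the homology class is preserved in the limit. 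A secondary technical point is ensuring the gradient-flow maps $\Psi_{t,1}$ are tame enough (piecewise smooth, as in Proposition~\ref{proposition_gradientflow}) for Gromov compactness to apply — this is the content of \cite{WERuan2, NishinouNoharaUeda2} and I would cite it rather than reprove it.
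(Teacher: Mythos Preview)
Your overall strategy coincides with the paper's---degenerate along the family to the central toric fiber and then lift to the resolution $\widehat{\mcal{X}}_0$---but there is a genuine gap in the lifting step. You correctly flag that the Gromov limit $\varphi_0$ could acquire sphere bubbles in $\mathrm{Sing}(\mcal{X}_0)$, and you then assert this is harmless because ``the homology class is preserved in the limit'' and lifting through $\Pi$ is ``unobstructed once one knows the curve is not entirely contained in the exceptional locus.'' But the obstruction to lifting is \emph{componentwise}: if a sphere bubble $S$ of $\varphi_0$ lies entirely in the singular stratum (which has positive dimension for $n\geq 3$), its proper transform is empty, and there is no canonical holomorphic lift $\widehat{S}\subset \widehat{F}_{01}$ with $\Pi_*[\widehat{S}]=[S]$. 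Even if some holomorphic sphere in the exceptional divisor happened to push forward to $[S]$, gluing it to the lifted disk so as to form a genuine nodal holomorphic curve requires matching the nodal points, which is not automatic. The paper's parenthetical remark at the end of its proof records exactly this failure mode: if $\varphi_0$ carried a sphere bubble in the singular locus, then $\Pi\circ\widehat{\varphi}_\beta$ would differ from $\varphi_0$ and hence $\overline{\Pi}_*[\widehat{\varphi}_\beta]\neq\Psi_{1,*}(\beta)$.

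The missing ingredient is the area constraint. Since $L_1$ is monotone and $\mu(\beta)=2$, the symplectic area of $\beta$ equals $1$ (this is~\eqref{equ_areadis}), and symplectic area is preserved in the Gromov limit, so $\varphi_0$ has area exactly $1$. Proposition~\ref{proposition_classification} then says that any nontrivial holomorphic curve bounded by $L_0$ has area at least $1$, and any configuration with more than one component has area strictly greater than $1$. Hence $\varphi_0$ has \emph{no} bubbles: it is a single holomorphic disk meeting the singular locus in at most finitely many points, and its strict transform to $\widehat{\mcal{X}}_0$ is then well-defined and represents the required class. You cite Proposition~\ref{proposition_classification}, but you extract from it only that ``every component has positive area,'' which is too weak; you need the sharp lower bound $\geq 1$ together with the total area being exactly $1$ to force irreducibility. (Minor point: your maps should be $\Psi_{1,t}$, not $\Psi_{t,1}$.)
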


\begin{proof}
Choose a monotonically decreasing sequence $(t_j)_{j \in \mathbb{N}}$ of real numbers converging to zero. Recall that each $\mcal{X}_{t_j}$ has a complex structure $J_{t_j}$. By perturbing them to compatible almost complex structures if necessary, we may assume that each $J_{t_j}$ is generic and $\{ J_{t_j} \}$ converges to the $J_0$. Then a $J_{t_j}$-holomorphic disk $\varphi_{t_j}$ converges to a holomorphic curve $\varphi_0$ representing in class $\Psi_{1,*} (\beta)$ by the Gromov's compactness theorem. Each $J_{t_j}$-holomorphic disk $\varphi_{t_j}$ has Maslov index two and hence area one because $L_{t_j}$ is monotone. Also, the limit $\varphi_0$ of the sequence $\{\varphi_{t_j}\}$  has area one. By Proposition~\ref{proposition_classification}, it does not admit any bubbles and hence intersect the singular loci of $\mcal{X}_0$ at most finitely many points. We can lift $\varphi_0$ into the holomorphic curve $(\widehat{\mcal{X}}_0, \widehat{L}_0)$ without losing any components. Its strict transformation $\widehat{\varphi}_\beta$ of $\varphi_0$ is a desired holomorphic curve. (If $\varphi_0$ were with a sphere bubble in the singular loci, then it would be different from $\Pi \circ \widehat{\varphi}_\beta$ because $\Pi \circ \widehat{\varphi}_\beta$ consists of disk components. Also, $\Psi_{1,*} (\beta) = [\varphi_0] \neq [\Pi \circ \widehat{\varphi}_\beta] = \overline{\Pi}_* ([\widehat{\varphi}_\beta])$.)
\end{proof}
 
Now, we are ready to prove Proposition~\ref{proposition_charmaslovindextwo}. 

\begin{proof}[Proof of Proposition~\ref{proposition_charmaslovindextwo}]
Since $L_1$ is monotone, it does not bound any non-constant disks of Maslov index $\leq 0$. Moreover, the index condition (the minimal Chern number of $\mcal{Q}_n$ is $\geq 2$) prohibits sphere bubbles of stable maps of the moduli space in the class $\beta$. Therefore, we may assume that $\varphi_\beta$ is a $J$-holomorphic disk (without any bubbles). 
By Lemma~\ref{lemma_maslovinter}, $\varphi_\beta$ must intersect the inverse image of $\Delta_\lambda^n - \mathring{\Delta}^n_\lambda$.

By Lemma~\ref{lemma_limitofholomorphicdiscs}, there exists a holomorphic map $\widehat{\varphi}_\beta$ bounded by the toric fiber $\widehat{L}_0$ such that $\Psi_{1,*} (\beta) = \overline{\Pi}_* ([\widehat{\varphi}_\beta])$. By Lemma~\ref{lemma_3maslovcorr}, the Maslov index of $\beta$ is equal to that of $\widehat{\Psi}_{1,*} (\beta) = \overline{\Pi}^{-1}_* (\Psi_{1,*} (\beta))$. Suppose that $\varphi_\beta$ intersects 
\begin{enumerate}
\item the inverse image of $\Delta_\lambda^n - \mathring{\Delta}^n_\lambda$ multiple times.
\item the inverse image $\mathring{F}$ of the relative interior of $f$ satisfying $f \neq f_{01}, f_0, f_{1}, \dots, f_n$. 
\end{enumerate}
In the first case, the holomorphic map $\widehat{\varphi}_\beta$ intersects toric strata multiple times. By the classification of holomorphic disks bounded by a toric fiber in \cite[Theorem 5.1]{ChoOh}, the Maslov index of $\widehat{\varphi}_\beta$ is $\geq 4$. In the latter case, the holomorphic map $\widehat{\varphi}_\beta$ intersects a toric stratum whose codimension is $\geq 2$. Then the Maslov index of $\widehat{\varphi}_\beta$ is $\geq 4$. Hence, in both cases, $\varphi_\beta$ must have a Maslov index $\geq 4$.
\end{proof}

\section{Computing counting invariants}\label{section_computingdiscpotfuncs}

The aim of this section is to complete our calculation of the disk potential of $L_1$. The collection $\{ \vartheta_1, \dots, \vartheta_n \}$ of oriented loops in $L_1$ generated by $\Phi_j$'s in~\eqref{equ_gssys} constitutes a basis for $\pi_1( L_1) \simeq \Z^n$. Recall that $z_j$ is the holonomy variable associated to the loop $\vartheta_j$ in \eqref{equ_holonomyvariables}.

Then the disk potential function $W_{L_1}$ of $L_1$ can be expressed as a Laurent polynomial in terms of $z_1, \dots, z_n$ variables. The following proposition claims that the disk potential has to be of the following form~\eqref{equ_undeterminedpotential}.

\begin{proposition}\label{proposition_potentialfunctionm}
The disk potential function of $L_1$ is of the form$\colon$
\begin{equation}\label{equ_undeterminedpotential}
W_{L_1}(\mathbf{z}) = \frac{1}{z_n} + \frac{z_n}{z_{n-1}} + \dots + \frac{z_3}{z_2} + \frac{z_{2}}{z_{1}} +  \kappa \cdot z_2 +  z_1 z_2
\end{equation}
for some $\kappa \in \Z$.
\end{proposition}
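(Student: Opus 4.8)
The plan is to reduce the problem to a finite list of faces of $\Delta^n_\lambda$ via Proposition~\ref{proposition_charmaslovindextwo}, to show each such face contributes a single Laurent monomial by combining Corollary~\ref{corollary_twodiscs} with the Cho--Oh classification on the toric model, to read the monomials off the polyhedral data, and to compute the coefficients by a degeneration argument. By Definition~\ref{def_diskpotentfunct} and Lemma~\ref{lemma_invarianceofcounting}, $W_{L_1}(\mathbf z)=\sum_{\mu(\beta)=2} n_\beta\, z^{\partial\beta}$, so only classes $\beta$ with $\mu(\beta)=2$ and $n_\beta\neq 0$ matter, and by Proposition~\ref{proposition_charmaslovindextwo} each such $\beta$ meets the inverse image of the relative interior of exactly one of the $n+1$ facets $f_0,\dots,f_n$, or of the codimension-two stratum $f_{01}$, and meets it once. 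I would then argue that each of these $n+2$ faces determines its boundary class uniquely: if $\beta$ and $\beta'$ are Maslov-two classes meeting the same face $f$ once, then Lemma~\ref{lemma_limitofholomorphicdiscs} produces holomorphic curves $\widehat\varphi,\widehat\varphi'$ bounded by $\widehat{L}_0$ with $\Psi_{1,*}(\beta)=\overline{\Pi}_*[\widehat\varphi]$ and $\Psi_{1,*}(\beta')=\overline{\Pi}_*[\widehat\varphi']$; by Lemma~\ref{lemma_3maslovcorr} these are effective Maslov-two classes of the toric fiber $\widehat{L}_0$ meeting the toric boundary divisor of $\widehat{\mcal{X}}_0$ exactly once, necessarily over $f$, so the Cho--Oh classification~\cite{ChoOh} forces both to be the basic disk class through the corresponding divisor. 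Hence $\Psi_{1,*}(\beta)=\Psi_{1,*}(\beta')$, and Corollary~\ref{corollary_twodiscs} gives $\partial\beta=\partial\beta'$; since the six monomials in \eqref{equ_undeterminedpotential} are pairwise distinct, $W_{L_1}$ is a sum of at most $n+2$ monomials, one per face.

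To identify the monomials, I would use the basis $\{\vartheta_1,\dots,\vartheta_n\}$ of $\pi_1(L_1)\simeq\Z^n$ given by the $S^1$-orbits of $\Phi_1,\dots,\Phi_n$ together with the identification of $\pi_1(L_1)$ with the character lattice supplied by the toric degeneration; then the boundary of the basic disk attached to a facet is read directly off the supporting inequalities \eqref{equ_GZpolytope}, yielding $z_n^{-1}$ from $f_n$, the monomials $z_{i+1}/z_i$ from $f_i$ for $i=1,\dots,n-1$, and $z_1 z_2$ from $f_0$. For the stratum $f_{01}$, the relation $\widehat\beta_0+\widehat\beta_1-2\widehat\beta_{01}=0$ in $\overline{H}_2(\widehat{\mcal{X}}_0,\widehat{L}_0;\Z)$ forces $2\,\partial\widehat\beta_{01}=\partial\widehat\beta_0+\partial\widehat\beta_1$, so its associated monomial is $z_2$. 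This accounts for all six monomials in \eqref{equ_undeterminedpotential}.

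For the coefficients, the classes $\beta_0,\dots,\beta_n$ are represented by the gradient holomorphic disks of Proposition~\ref{cor_monotonetorus}, which lie in the locus where $\Psi_1$ is a symplectomorphism; choosing $J$ on $\mcal{X}_1$ close to the complex structure coming from the degeneration and invoking Gromov compactness --- together with the a priori exclusion of bubbling (monotonicity, minimal Chern number $\geq 2$, and Proposition~\ref{proposition_classification}, which forces area $>1$ on any reducible limit) --- identifies $\mcal{M}_1(\mcal{X}_1,L_1,\beta_i;J)$ with the moduli space of the corresponding basic disk on the smooth toric variety $\widehat{\mcal{X}}_0$, a single point by \cite{ChoOh}. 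Hence $n_{\beta_i}=1$ for $i=0,\dots,n$, which pins the coefficients of $z_n^{-1}$, $z_{i+1}/z_i$, and $z_1 z_2$ to $1$. The coefficient of $z_2$ is by definition the open Gromov--Witten invariant of the class through $f_{01}$, i.e.\ the degree of the evaluation map \eqref{equ_evaluationev0} between closed oriented $n$-manifolds, hence an integer $\kappa$, which is all the proposition claims. (When $n=2$ the class through $f_{01}$ is only well defined modulo the vanishing cycle $[S^2]$ at the origin, which has trivial boundary and Chern number; this ambiguity is folded into $\kappa$, cf.\ Remark~\ref{remarkn2n3differ}.)

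The hard part is not any of the above but the later determination of $\kappa$: the disk class over $f_{01}$ passes through the orbifold locus of $\mcal{X}_0$, where neither $\Psi_1$ nor the blow-up $\Pi$ identifies the relevant moduli spaces, so toric and degeneration methods cannot evaluate this count. Pinning it down requires the subsequent input in this section --- the decomposition of the monotone Fukaya category (Theorem~\ref{theorem_corsheridan}) and the critical values of the Lie-theoretic mirror (Proposition~\ref{cor_criticalvalues}) --- and yields $\kappa=2$; the present proposition records only $\kappa\in\Z$.
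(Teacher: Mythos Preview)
Your proposal is correct and follows essentially the same route as the paper: restrict contributing classes via the toric degeneration and the resolution $\widehat{\mcal X}_0$, identify boundaries via the Cho--Oh classification, and compute the facet coefficients by transporting the toric moduli through the symplectomorphism $\Psi_1$. The paper organizes the middle step through an explicit classification lemma (Lemma~\ref{lemma_aaa}) listing all effective Maslov-two classes in $\pi_2(\widehat{\mcal X}_0,\widehat L_0)$, including the bubbled family $\widehat\beta_{01}+\ell(\widehat\beta_0+\widehat\beta_1-2\widehat\beta_{01})$, and then observes these all project to the same class and hence to the single monomial $z_2$; you instead invoke Proposition~\ref{proposition_charmaslovindextwo} as a black box and argue uniqueness of $\partial\beta$ face-by-face, which is equivalent but hides the semi-Fano bubble analysis inside that proposition.

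Two small corrections. First, the moduli space $\mcal M_1(\widehat{\mcal X}_0,\widehat L_0,\widehat\beta_i)$ is not ``a single point'': it is an $n$-manifold and the content of the Cho--Oh/Cho--Poddar result you want is that $\mathrm{ev}_0$ is an orientation-preserving diffeomorphism onto $\widehat L_0$, hence has degree $1$ (this is exactly how the paper phrases Lemma~\ref{lemma_computationofogw}). Second, the assertion that the lifted curve $\widehat\varphi$ is ``necessarily over $f$'' deserves one sentence of justification: the limit $\varphi_0$ in $\mcal X_0$ is bubble-free by Proposition~\ref{proposition_classification}, so its strict transform is a genuine Maslov-two disk, and since $\Psi_1$ is a symplectomorphism off the singular stratum it preserves the intersection pattern with the facet divisors (and a disk meeting $F_{01}$ lifts to one meeting the exceptional divisor $\widehat F_{01}$). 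With these tweaks your argument matches the paper's.
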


To verify Proposition~\ref{proposition_potentialfunctionm}, we collect some lemmas. For each $i = 0, 1, \dots, n$, recall that $\beta_i$ is the homotopy class represented by a gradient disk intersecting the inverse image of the relative interior of $f_i$, see~\eqref{equ_betaivarphi}. To orient the moduli spaces of stable maps into $(\mcal{X}_1, L_1)$, we take the orientation and the standard spin structure for $L_1$ induced by the (local) torus action. We then compute the open Gromov--Witten invariant of $\beta_i$ for $i = 0,1, \dots, n$. 

\begin{lemma}\label{lemma_computationofogw}
For each class $\beta_i$ $(i= 0, 1, \dots, n)$, the open Gromov--Witten invariant $n_{\beta_i}$ is one. Namely, the degree of the evaluation map $\mathrm{ev}_0 \colon \mcal{M}_1(L_1, \beta_i) \to L_1$ is one.
\end{lemma}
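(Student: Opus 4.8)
The strategy is to identify the moduli space of Maslov index two disks in $\beta_i$ bounded by $L_1$ with the moduli space of basic disks in the corresponding class $\widehat{\beta}_i$ bounded by the toric fiber $\widehat{L}_0$ of the \emph{smooth} toric variety $\widehat{\mcal{X}}_0$, where the open Gromov--Witten invariant is classically one by Cho--Oh \cite{ChoOh}.

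By Lemma~\ref{lemma_invarianceofcounting} it suffices to compute $n_{\beta_i}$ for a single convenient compatible almost complex structure, and since each $\Psi_{1,t}$ restricts to a symplectomorphism onto the smooth locus carrying $L_1$ to $L_t$, we have $n_{\beta_i}(L_1) = n_{\beta_i}(L_t)$, so it suffices to work on $(\mcal{X}_t, L_t)$ for $t>0$ small. First I would push the toric complex structure $J_0$ of $\mcal{X}_0$ forward through $\Psi_1^{-1}$ on the complement of a neighborhood of the singular locus and extend it to a compatible structure; the chain $\mathring{\mcal{X}}_1 \xrightarrow{\Psi_1} \mathring{\mcal{X}}_0 \xleftarrow{\Pi} \widehat{\mcal{X}}_0 \setminus \widehat{F}_{01}$ identifies $L_1 \cong L_0 \cong \widehat{L}_0$ and $\beta_i$ with the basic class $\widehat{\beta}_i$, this being exactly the identification underlying $\widehat{\Psi}_{1,*}$ and Lemma~\ref{lemma_gradientdiscflow}. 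On the $\widehat{\mcal{X}}_0$ side the Cho--Oh classification shows every Maslov index two disk in $\widehat{\beta}_i$ meets exactly the toric divisor $\widehat{F}_i$, once, and hence avoids $\widehat{F}_{01}$; such disks transport back to holomorphic disks bounded by $L_1$, and by their Fredholm regularity can be deformed to $J_t$-holomorphic disks in $(\mcal{X}_t, L_t)$, yielding an inclusion $\mcal{M}_1(\widehat{\mcal{X}}_0, \widehat{L}_0, \widehat{\beta}_i) \hookrightarrow \mcal{M}_1(\mcal{X}_t, L_t, \beta_i)$ compatible with the evaluation maps.

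For the reverse inclusion I would let $t \to 0$ and apply Gromov compactness as in Section~\ref{sect_effectivediscclasses}: a sequence of $J_{t_j}$-holomorphic disks in $\beta_i$ limits to a $J_0$-holomorphic stable map bounded by $L_0$ in the class $\Psi_{1,*}\beta_i$, which has symplectic area one. By Proposition~\ref{proposition_classification} an area-one class has no multi-component representative, so the limit is a single bubble-free disk, and by the Cho--Poddar classification it meets a unique toric stratum of $\mcal{X}_0$, necessarily the divisor over $f_i$ since $\Psi_{1,*}\beta_i$ is the basic class $\beta_i$; it therefore avoids the singular locus and lifts to a basic disk in $\widehat{\beta}_i$. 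This upgrades the inclusion above to a diffeomorphism $\mcal{M}_1(\mcal{X}_t, L_t, \beta_i) \cong \mcal{M}_1(\widehat{\mcal{X}}_0, \widehat{L}_0, \widehat{\beta}_i) \cong \widehat{L}_0 \cong T^n$ intertwining $\mathrm{ev}_0$ with a degree $\pm 1$ map; the torus-invariant orientation and standard spin structure fix the sign, giving $n_{\beta_i} = 1$. When $n = 2$ the same reasoning applies to $\beta_0, \beta_1, \beta_2$ without change, the vanishing Lagrangian sphere over the origin being irrelevant since it has zero area and Maslov index and so cannot bubble off an area-one disk.

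The step I expect to be the main obstacle is this reverse inclusion: ruling out holomorphic disks in $\beta_i$ that stray into the region near $F_{01}$ where the transported structure fails to be toric, and showing the Gromov limit never touches the singular stratum of $\mcal{X}_0$. This is precisely where the area bound of Proposition~\ref{proposition_classification} (a bubble forces area strictly greater than one) and the Cho--Poddar Maslov formula (a Maslov index two disk cannot meet a toric stratum of codimension $\geq 2$) are indispensable.
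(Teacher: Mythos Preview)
Your proposal is correct and shares the paper's core strategy of transporting the problem to the toric side and invoking the classification of basic disks. The paper's execution is more direct in two respects.

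First, it works on the orbifold $\mcal{X}_0$ itself and cites Cho--Poddar \cite[Proposition~9.3]{ChoPoddar} directly, which already yields that $\mcal{M}_1(L_0, J_0; \Psi_{1,*}\beta_i)$ is Fredholm regular with $\mathrm{ev}_0$ an orientation-preserving diffeomorphism onto $L_0$. Your detour through the smooth resolution $\widehat{\mcal{X}}_0$ and Cho--Oh is unnecessary for the classes $\beta_0,\dots,\beta_n$, since those disks miss both the singular locus of $\mcal{X}_0$ and the exceptional divisor $\widehat{F}_{01}$; the orbifold route is simply shorter.

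Second, the paper does not run a Gromov-compactness argument over the family $\{J_t\}$. It pulls $J_0$ back via $\Psi_1$ on the complement of a small neighborhood $\Psi_1^{-1}(\mcal{U})$ of the singular stratum, extends to a compatible almost complex structure $J_1$ on $\mcal{X}_1$, and then asserts a commuting square of orientation-preserving diffeomorphisms between $\mcal{M}_1(L_1, J_1; \beta_i)$ and $\mcal{M}_1(L_0, J_0; \beta_i)$, after which Lemma~\ref{lemma_invarianceofcounting} gives $n_{\beta_i}=1$. The step you single out as the main obstacle---showing that no $J_1$-disk in $\beta_i$ enters $\Psi_1^{-1}(\mcal{U})$---is treated tersely there; your family-degeneration argument, combined with the area bound of Proposition~\ref{proposition_classification} to force the Gromov limit to be a single smooth disk avoiding the singular locus, is a legitimate way to make that step explicit. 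So your approach buys a more self-contained justification of the reverse inclusion at the price of extra machinery (the blow-up $\Pi$ and the limit over $t\to 0$), while the paper's buys brevity by working with a single fixed $J_1$ and leaning on the orbifold classification.
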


\begin{proof}
Let $J_0$ be the toric complex structure of the toric orbifold $\mcal{X}_0$. Fix an isomorphism $L_0 \simeq (S^1)^n$ determined by the toric moment map $(\Phi_1, \dots, \Phi_n)$. Moreover, it trivializes the tangent bundle of $L_0$ and thus gives an orientation of $L_0$. Also, the spin structure given by this trivialization orients the moduli spaces of stable maps into $\mcal{X}_0$.

For each class $\beta_i$, the class $\Psi_{1,*} (\beta_i)$ is Fredholm regular and the evaluation map is an orientation preserving diffeomorphism
$$
\mathrm{ev}_0 \colon \mcal{M}_1 (L_0, J_0; \Psi_{1,*} (\beta_i)) \to L_0
$$
by the analysis of the moduli spaces of holomorphic disks bounded by a toric fiber in \cite[Proposition 9.3]{ChoPoddar}. By abuse of notation, let us simply denote $\Psi_{1,*} (\beta_i)$ by $\beta_i$

The holomorphic disk $\varphi_{\beta_i}$ bounded by $L_0$ emanating from the toric divisor over $f_i$ is fully contained in the complement of a sufficiently small open subset $\mcal{U}$ of the singular strata over $f_{01}$. 
After pulling the complex structure $J_0$ back to the complement of $\Psi^{-1}_1(\mcal{U})$, extend the pull-backed complex structure to a compatible almost complex structure on $\mcal{X}_1$. 
Let us denote an extended almost complex structure by $J_1$.  
By our choice of $J_1$, we then have the following commutative diagram consisting of orientation preserving diffeomorphisms
$$
\xymatrix{
\mcal{M}_1 (L_1, J_1; \beta_i) \ar[r]^{\Psi_{1,*} \,\,\,\,\,\, } \ar[d]_{\mathrm{ev}_0} & \mcal{M}_1 (L_0, J_0;  \beta_i) \ar[d]^{\mathrm{ev}_0}  \\
L_1 \ar[r]_{ \Psi_{1}}  & L_0. }
$$
By Lemma~\ref{lemma_invarianceofcounting}, the counting $n_{\beta_i}$ is invariant under a choice of {generic} compatible almost complex structure and hence $n_{\beta_i} = 1$.
\end{proof}

Recall that the lifted toric fiber of $L_0$ via the toric blow-up $\Pi \colon \widehat{\mcal{X}}_0 \to \mcal{X}_0$ along the singular stratum over $f_{01}$ is denoted by $\widehat{L}_0$. 
The following lemma classifies all homotopy classes of Maslov index two in $\pi_2(\widehat{\mcal{X}}_0, \widehat{L}_0)$ that can be realized as a holomorphic curve bounded in the toric resolution $\widehat{\mcal{X}}_0$.

\begin{lemma}\label{lemma_aaa}
Let $\widehat{\beta}_i$ be the class represented by a holomorphic disk intersecting $\widehat{F}_i$ in $\pi_2(\widehat{\mcal{X}}_0, \widehat{L}_0)$ where $\widehat{F}_i$ is the strict transform of ${F}_i$. 
Every class which can be represented by a holomorphic curve of Maslov index two belongs to one of the following list of homotopy classes$\colon$
\begin{equation}\label{equ_possibleclasse}
\left\{ \widehat{\beta}_0, \widehat{\beta}_1, \dots, \widehat{\beta}_n \right\} \cup \left\{ \widehat{\beta}_{01} + \ell (\widehat{\beta}_0 + \widehat{\beta}_1 - 2 \widehat{\beta}_{01}) \colon \ell \in \mathbb{Z} \right\}.
\end{equation}
\end{lemma}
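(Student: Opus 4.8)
The plan is to classify the stable maps that can underlie a Maslov-index-two holomorphic curve in $(\widehat{\mcal{X}}_0,\widehat{L}_0)$ by combining the toric classification of disks of Cho--Oh \cite{ChoOh} with the structure of the Mori cone of $\widehat{\mcal{X}}_0$, and then to discard the configurations that fail to be connected. Throughout I would work with the toric complex structure $J_0$ on $\widehat{\mcal{X}}_0$, which is the only case needed, since the curve $\widehat{\varphi}_\beta$ produced in Lemma~\ref{lemma_limitofholomorphicdiscs} is $J_0$-holomorphic.

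\textbf{Step 1 (decomposition).} A $J_0$-holomorphic curve of Maslov index two bounded by the toric fibre $\widehat{L}_0$ is a genus-zero stable map whose domain is a principal disk together with a tree of sphere bubbles. By \cite{ChoOh}, the class of the disk component equals $\sum_{\bullet}k_\bullet\widehat{\beta}_\bullet$ with $k_\bullet$ nonnegative integers (the local intersection numbers with the toric prime divisors $\widehat{F}_\bullet$, $\bullet\in\{01,0,1,\dots,n\}$) and Maslov index $2\sum_\bullet k_\bullet$, while the sphere bubbles carry an effective class $\alpha\in H_2(\widehat{\mcal{X}}_0;\Z)$. Additivity of the Maslov index gives $\sum_\bullet k_\bullet+c_1(\alpha)=1$.

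\textbf{Step 2 (effective curve classes).} The fan $\widehat{\Sigma}=\Sigma^*(v)$ has $n+2$ rays, so $\widehat{\mcal{X}}_0$ has Picard rank two and its Mori cone is generated by two rays. These can be read off from the two linear relations among the rays; equivalently, $\widehat{\mcal{X}}_0$ is the crepant resolution of the weighted projective space $\mathbb{P}(1,1,2,\dots,2)$ along its $\Z/2\Z$-stratum (for $n=2$ this is $\mathbb{F}_2\to\mathbb{P}(1,1,2)$). The two generators are the exceptional fibre class $E:=\widehat{\beta}_0+\widehat{\beta}_1-2\widehat{\beta}_{01}$, with $c_1(E)=0$ (its Maslov index is $2+2-2\cdot 2=0$, as noted before the statement), and a class $\ell_0$ represented by strict transforms of lines, with $c_1(\ell_0)=n$. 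Hence every effective curve class of $\widehat{\mcal{X}}_0$ has Chern number in $\{0\}\cup\{kn:k\ge 1\}$; in particular it is $\ge 0$ and never equal to $1$. Feeding this into Step 1: a non-constant disk component forces $\sum_\bullet k_\bullet\ge 1$, and $c_1(\alpha)\ge 0$, so $\sum_\bullet k_\bullet=1$ and $c_1(\alpha)=0$, while a constant disk component would force $c_1(\alpha)=1$, which is impossible. Thus $\alpha=aE$ for some integer $a\ge 0$ and the disk component lies in a single class $\widehat{\beta}_i$, i.e. the curve represents $\widehat{\beta}_i+aE$ with $i\in\{01,0,1,\dots,n\}$.

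\textbf{Step 3 (connectedness) and the main obstacle.} If $i=01$ then $\widehat{\beta}_{01}+aE=\widehat{\beta}_{01}+a(\widehat{\beta}_0+\widehat{\beta}_1-2\widehat{\beta}_{01})$ lies in the stated list. If $i\in\{0,1,\dots,n\}$ I would argue $a=0$: since $\widehat{\beta}_i\cdot\widehat{F}_{01}=0$, positivity of intersections forces the disk component (not contained in the toric divisor $\widehat{F}_{01}$, as its boundary lies on $\widehat{L}_0$) to have image disjoint from $\widehat{F}_{01}$, whereas $E\cdot\widehat{F}_{01}=-2<0$ forces every non-constant sphere component in a class $bE$, $b\ge 1$, to be contained in $\widehat{F}_{01}$, so connectedness of the stable map is violated unless $a=0$; then the curve represents $\widehat{\beta}_i$, again in the list. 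The delicate point is Step 2, specifically ruling out effective curve classes of Chern number one (which is exactly what excludes spurious Maslov-two classes of the shape $\ell_0+aE$): this requires identifying $\widehat{\mcal{X}}_0$ explicitly, or carrying out the primitive-relation computation for $\widehat{\Sigma}$, to see that the two extremal curve classes have Chern numbers $0$ and $n$, and checking that the Cho--Oh classification genuinely extends to the bubbled stable maps allowed here by the presence of the Chern-number-zero class $E$.
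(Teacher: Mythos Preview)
Your argument is correct and follows essentially the same route as the paper: decompose a Maslov-two stable map into a basic disk (Cho--Oh) plus sphere bubbles of nonnegative Chern number, identify the Chern-zero sphere classes as multiples of $E=\widehat{\beta}_0+\widehat{\beta}_1-2\widehat{\beta}_{01}$ living in the exceptional divisor, and use connectedness to force the disk component to be $\widehat{\beta}_{01}$ when bubbles are present. The paper shortcuts your Step~2 by observing that the chopped polytope makes $\widehat{\mcal{X}}_0$ semi-Fano (nef anticanonical), which gives $c_1(\alpha)\ge 0$ directly without computing the Mori cone, and it compresses your Step~3 into the single sentence ``such sphere classes interact only with $\widehat{\beta}_{01}$''; your explicit intersection-number justification ($\widehat{\beta}_i\cdot\widehat{F}_{01}=0$ versus $E\cdot\widehat{F}_{01}<0$) is a welcome elaboration of that remark.
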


\begin{proof}
According to \cite[Theorem 11.1]{FOOOToric1}, a homotopy class which can contribute to the disk potential of a toric fiber contains either
\begin{enumerate}
\item (Type I) a holomorphic disk without sphere bubbles or
\item (Type II) holomorphic disks with sphere bubbles
\end{enumerate}
Notice that the classes $\widehat{\beta}_0, \widehat{\beta}_1, \dots, \widehat{\beta}_n$, and $\widehat{\beta}_{01}$ in~\eqref{equ_possibleclasse} belong to (Type I) by the classification of Cho--Oh in~\cite{ChoOh}.

We claim that if there exists a class in (Type II), then the class must be of the form $\widehat{\beta}_{01} + \ell (\widehat{\beta}_0 + \widehat{\beta}_1 - 2 \widehat{\beta}_{01})$ for some $\ell \in \Z - \{0\}$. The GZ polytope $\Delta^n_\lambda$ (up to a translation $u_i^\prime = u_i + (i - 1)$) is reflexive, see \cite[Theorem 3.7]{ChoKimSO} for instance. By chopping the polytope $\Delta^n_\lambda$ along $f_{01}$, we obtain a semi-Fano toric manifold $\widehat{\mcal{X}}_0$, i.e.,  its anti-canonical line bundle is numerically effective. It implies that the Chern number of a non-constant holomorphic sphere is greater or equal to zero. Because of the additivity of Maslov indices and Chern numbers, the only possible configuration of holomorphic curve of Maslov index two bounded by $\widehat{L}_0$ is a holomorphic disk together with a sphere bubble of Chern number zero. 
The holomorphic sphere of Chern number zero can occur only in the exceptional divisor and represents a multiple of $\widehat{\beta}_0 + \widehat{\beta}_1 - 2 \widehat{\beta}_{01}$. Hence, such sphere classes interact only with $\widehat{\beta}_{01}$.
\end{proof}

We are now ready to prove Proposition~\ref{proposition_potentialfunctionm}.

\begin{proof}[Proof of Proposition~\ref{proposition_potentialfunctionm}]
Suppose that there exists a holomorphic disk bounded by $L_1$ in a class $\beta$ of Maslov index two. By Lemma~\ref{lemma_limitofholomorphicdiscs}, there exists a class $\widehat{\beta}$ which can be realized by a holomorphic disk such that $[\widehat{\beta}] = \widehat{\Psi}_{1,*} (\beta)$ in $\overline{H}_2 (\widehat{\mcal{X}}_0, \widehat{L}_0)$.

By Lemma~\ref{lemma_3maslovcorr} and~\ref{lemma_aaa}, the Maslov index of $\widehat{\beta}$ is two and $\widehat{\beta}$ is one of the classes in~\eqref{equ_possibleclasse}. Also, Lemma~\ref{lemma_computationofogw} computes the counting invariant of $\beta_{\bullet}$ for $\bullet = 0, 1, \dots, n$ and hence the coefficients of the Laurent monomials corresponding to those classes are all one.

Observe that 
$$
\Pi_* (\widehat{\beta}_{01} + \ell (\widehat{\beta}_0 + \widehat{\beta}_1 - 2 \widehat{\beta}_{01})) = \Pi_* (\widehat{\beta}_{01}),
$$
which corresponds to the Laurent monomial of the form $z_2$ by Corollary~\ref{corollary_twodiscs}. Moreover, it follows from Gromov's compactness theorem that there are only \emph{finitely} many classes of Maslov index two represented by holomorphic disks bounded by the monotone Lagrangian torus $L_1$. Recall that we are perturbing the moduli spaces geometrically (\emph{not} abstractly), by taking a generic choice of almost complex structures on $X_1$. Thus, the counting invariants are \emph{integers}. In conclusion, the disk potential function is of the form~\eqref{equ_undeterminedpotential}.
\end{proof}

The remaining task is to determine the coefficient $\kappa$. 

\begin{proposition}\label{proposition_openGWin}
The coefficient $\kappa$ of $z_2$ in~\eqref{equ_undeterminedpotential} is two. 
\end{proposition}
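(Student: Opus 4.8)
The plan is to pin down the remaining open Gromov--Witten invariant $\kappa = n_{\beta_{01}}$ by combining the structural constraint from Sheridan's decomposition of the monotone Fukaya category (Theorem~\ref{theorem_corsheridan}) with the computation of the critical values of the Lie-theoretic Landau--Ginzburg mirror (Proposition~\ref{cor_criticalvalues}). The point is that the disk potential $W_{L_1}$ in~\eqref{equ_undeterminedpotential} has exactly one undetermined coefficient, and every critical value of $W_{L_1}$ must, by Theorem~\ref{theorem_corsheridan}, be an eigenvalue of the quantum multiplication by $c_1(T\mcal{Q}_n)$ --- otherwise the object $(L_1,\nabla)$ attached to a critical point $\nabla$ would live in a trivial summand, contradicting the fact (established from the $A_\infty$-relations in Section~3) that such $(L_1,\nabla)$ is a nonzero object whenever $\nabla$ is a critical point. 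So the strategy is: show that for the ``wrong'' values of $\kappa$ the critical values of $W_{L_1}$ cannot all be eigenvalues, while for $\kappa = 2$ they match.

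First I would set up the critical point equations for
$$
W_{L_1}(\mathbf{z}) = \frac{1}{z_n} + \frac{z_n}{z_{n-1}} + \dots + \frac{z_3}{z_2} + \frac{z_{2}}{z_{1}} + \kappa z_2 + z_1 z_2,
$$
i.e. $z_j \partial_{z_j} W_{L_1} = 0$ for $j = 1, \dots, n$, treating $\kappa$ as a parameter. The equations for $j = 3, \dots, n$ telescope and let one solve for $z_3, \dots, z_n$ (equivalently the monomials $z_{j+1}/z_j$ and $1/z_n$) in terms of $z_1, z_2$; the equations for $j = 1$ and $j = 2$ then reduce to a small system in $z_1, z_2$ whose solutions determine the critical values. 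Substituting back, each critical value $w$ will be expressible in closed form, and the condition that $\{w\}$ is contained in the eigenvalue set $\{\, n\zeta : \zeta^n = 4q\,\} \cup \{0\}$ from Proposition~\ref{cor_criticalvalues} (with $q$ normalized by the monotonicity/area count, here $q = 1$ since $\lambda_1 = n$) forces an algebraic condition on $\kappa$. I expect this to cut $\kappa$ down to a single value, $\kappa = 2$ --- note that $\kappa$ is already known to be a (finite) integer by Proposition~\ref{proposition_potentialfunctionm}, so only integrality-compatible solutions need be checked, and one can further use that $W_{L_1}$ must have the correct number of critical points counted with multiplicity (namely $n+1$, matching $\dim QH(\mcal{Q}_n)$) to exclude spurious values.

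An alternative, more hands-on route for the lower bound $\kappa \geq 1$ and the exact value would be to produce the holomorphic disk in class $\beta_{01}$ directly. By Lemma~\ref{lemma_limitofholomorphicdiscs} and Lemma~\ref{lemma_aaa}, any Maslov-two disk through the inverse image of $f_{01}$ limits, in the toric blow-up $\widehat{\mcal{X}}_0$, to a configuration in one of the classes $\widehat{\beta}_{01} + \ell(\widehat{\beta}_0 + \widehat{\beta}_1 - 2\widehat{\beta}_{01})$; since the fiber $L$ over the stratum $u_2 = 0$ is $S^2 \times T^{n-2}$ and the exceptional divisor $\widehat{F}_{01}$ is a $\CP^1$-bundle, one can count the contributions of the disk-plus-sphere-bubble configurations in the semi-Fano manifold $\widehat{\mcal{X}}_0$ using the classification of~\cite{ChoPoddar} together with the open mirror theorem / Seidel-type relations for semi-Fano toric manifolds, and push the answer down through $\overline{\Pi}_*$ and $\Psi_{1,*}^{-1}$. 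Summing the basic-disk count through $\widehat{F}_{01}$ (which is $1$) with the wall-crossing/bubbling correction coming from the two boundary divisors $\widehat{F}_0, \widehat{F}_1$ meeting the exceptional locus should give $1 + 1 = 2$.

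The main obstacle is the second route's bookkeeping of the bubbled contributions: in $\widehat{\mcal{X}}_0$ the classes $\widehat{\beta}_{01} + \ell(\widehat{\beta}_0 + \widehat{\beta}_1 - 2\widehat{\beta}_{01})$ all have Maslov index two and all map to the same monomial $z_2$ downstairs, so one must argue that their combined (virtual) contribution is a well-defined integer and then evaluate it --- this is exactly the kind of nontrivial open Gromov--Witten computation the introduction flags as the crux. For that reason I would make the Fukaya-category argument the primary proof: assuming $W_{L_1}$ has the form~\eqref{equ_undeterminedpotential}, derive its critical values as functions of $\kappa$, invoke Theorem~\ref{theorem_corsheridan} and Proposition~\ref{cor_criticalvalues} to get $\kappa = 2$, and relegate the direct disk count to a remark. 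The one subtlety to check carefully is that a critical point $\nabla$ of $W_{L_1}$ really does give a nonzero object (so that its critical value is genuinely an eigenvalue): this follows because at a critical point the deformed Floer differential vanishes on $1$-cochains and hence, by the $A_\infty$-relations, on all of $H^*(T^n)$, as recorded after Definition~\ref{def_diskpotentfunct}.
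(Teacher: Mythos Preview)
Your primary approach is precisely the paper's: compute the critical points of $W_{L_1}$ with $\kappa$ as a parameter, use that any critical point yields a nonzero object so that its value must lie in the eigenvalue set of Proposition~\ref{cor_criticalvalues}, and solve for $\kappa$. The paper's execution is that $\partial_{z_1}W_{L_1}=0$ forces $z_1=\pm1$, the telescoping equations then give $z_1^{-1}+\kappa+z_1=4$ so $\kappa\in\{2,6\}$, and $\kappa=6$ is excluded because it would admit an additional critical point (with $z_1=1$) whose value falls outside the eigenvalue list; your aside that the critical-point count equals $n+1$ is off for even $n$ (where $\dim QH(\mcal{Q}_n)=n+2$) but is not needed, and the alternative direct disk-counting route you sketch is not used in the paper.
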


\begin{proof}
First, let us compute the partial derivatives of $W_{L_1}$ in~\eqref{equ_undeterminedpotential}.
Note that ${\partial_{z_1} W_{L_1}}= 0$ yields that $z_1 = \pm 1$. The partial derivatives with respect to the other variables give rise to the following system of equations$\colon$
\begin{equation}\label{equ_equequ0}
\frac{1}{z_n} = \frac{z_n}{z_{n-1}} = \cdots = \frac{z_3}{z_2} = \frac{z_2}{z_1} + \kappa \cdot z_2 + z_1z_2,
\end{equation}
which yields 
\begin{equation}\label{equ_equequ1}
z_{n-1} = z_n^2, \, \cdots, \, z_{j} = z_n^{n-j+1}, \, \cdots, \, z_{2} = z_{n}^{n-1}
\end{equation}
and
\begin{equation}\label{equ_equequ2}
z_3 = z_2^2 \left( \frac{1}{z_1} + \kappa + z_1 \right) \Rightarrow z_n^{n-2} = (z_n^{n-1})^2 \left( \frac{1}{z_1} +\kappa + z_1 \right)
\end{equation}
By~\eqref{equ_equequ1} and~\eqref{equ_equequ2}, regardless of $\kappa$, the disk potential $W_{L_1}$ always admits a critical point $(z_1, \cdots, z_n) \in (\C^*)^n$. It means that there exists a flat connection $\nabla$ on the trivial line bundle $\scr{L}$ over $L_1$ making $(L_1, \nabla)$ a non-zero object in the monotone Fukaya category.

Then, by Corollary~\ref{cor_criticalvalues} and Theorem~\ref{theorem_corsheridan}, the possible critical values of $W_{L_1}(\mathbf{z})$ are
\begin{equation}\label{equ_equeque5}
\left\{ n \sqrt[n]{4} \cdot \zeta \mid \zeta^n = 1 \right\} \cup \{ 0\}. 
\end{equation}
Because of~\eqref{equ_equequ0}, the critical values of $W_{L_1}$ are
$$
\frac{1}{z_n} + \frac{z_n}{z_{n-1}} + \cdots + \frac{z_3}{z_2} + \frac{z_2}{z_1} + \kappa \cdot z_2 + z_1z_2 = \frac{n}{z_n}.
$$
Since $n/z_n \neq 0$, $z_n^{-1} = \sqrt[n]{4} \cdot \zeta$ and hence $z_n^{-n} = 4$. By~\eqref{equ_equequ2}, we have
\begin{equation}\label{equ_equequ4}
\frac{1}{z_1} + \kappa + z_1 = 4.
\end{equation}
If $z_1 = -1$ (resp. $z_1 = 1$), then $\kappa = 6$ (resp. $\kappa = 2$).

\textbf{Case I.}  If $\kappa = 6$, the disk potential $W_{L_1}$ \emph{does} have a critical point with $z_1 = 1$. From $z_1 = 1$,~\eqref{equ_equequ2} determines $z_n$ and then the remaining $z_\bullet$'s are determined by~\eqref{equ_equequ1}. However, both $z_1 = 1$ and~\eqref{equ_equequ4} cannot be satisfied, which in turn implies that the corresponding critical value is not in the list~\eqref{equ_equeque5}.

\textbf{Case II.} If $\kappa = 2$, the disk potential $W_{L_1}$ does \emph{not} have a critical point with $z_1 = -1$. It is because $z_3$ must vanish by~\eqref{equ_equequ2}. In this case, $W_{L_1}$ can have a critical point only when $z_1 = 1$ and thus every critical value of $W_{L_1}$ is in the list~\eqref{equ_equeque5}. 

In sum, $\kappa = 2$.
\end{proof}

\begin{proof}[Proof of Theorem~\ref{theorem_main}]
Combining Proposition~\ref{proposition_potentialfunctionm} and~\ref{proposition_openGWin}, the disk potential $W_{L}$ is explicitly computed.
\end{proof}

\section{Final remarks}

One of our motivations is to devise a tool for computing the disk potential of a monotone projective smooth variety which occurs at a generic fiber of a toric degeneration. We would like to propose a computational strategy, which we call ``from globals to locals" and ``from locals to globals". Namely, we first compute the disk potential of a simpler model and then understand the the moduli of pseudo holomorphic disks in local models contained in the simpler model. Using the local models, we compute the disk potential of another new space.  

The author expects that the strategy is applicable to a class of Fano manifolds admitting completely integrable systems compatible with toric degenerations arising from Newton--Okounkov bodies. For instance, the class includes Gelfand--Zeitlin systems of isotropic flag manifolds and bending systems of polygon spaces. 

We have computed the disk potential for $\mathrm{OG}(1, \C^{n+2}) \simeq \mcal{Q}_n$. According to~\cite{ChoKimSO}, the coadjoint orbit $\mcal{O}_\lambda \simeq \mathrm{OG}(1, \C^{n+2})$ has a Lagrangian GZ $S^n$-fiber at the origin when $\lambda$ is~\eqref{equ_givenlambda}. Thus, we obtain the disk potential for $T^*S^n$. To study GZ systems and bending systems, in addition to the local model $T^*S^n$, we also need the local model $T^* \mathrm{SO}(3)$ such that the affine base of a Lagrangian fibration on $T^* \mathrm{SO}(3)$ (or a product of bases from studied local models) locally agrees with that of the affine base of Lagrangian fibration that we want to study.

The orthogonal Grassmannian $\mathrm{OG}(2, \C^5) (\simeq \mathrm{OG}(3, \C^6))$ contains such a local model of $T^*\mathrm{SO}(3)$. As the disk potential for $\mathrm{OG}(2, \C^5)$ can be derived as in the previous sections. We briefly discuss the computation for the disk potential for future research. The author jointly with S.-C. Lau and X. Zheng will compute the disk potential of polygon spaces.

Using the same strategy in Section~\ref{sect_effectivediscclasses} and~\ref{section_computingdiscpotfuncs}, we can also compute the disk potential of the GZ system in $\mathrm{OG}(2, \C^5)$. Take $\lambda = (\lambda_1 = \lambda_ 2 )$ with $\lambda_1 > 0$. We denote by $\mcal{O}^5_\lambda$ the adjoint $\mathrm{SO}(5)$-orbit of the block diagonal matrix $D^5_\lambda = \mathrm{diag}(B(\lambda_1), B(\lambda_2), 0_{1 \times 1})$, which is isomorphic to $\mathrm{OG}(2, \C^5)$.
Moreover, it admits the GZ system on $\mcal{O}^5_\lambda$, whose image is given by 
\begin{equation}\label{equ_GZpolytopeog25}
u_{1,2} + u_{1,1} \geq 0, \, u_{1,2} - u_{1,1} \geq 0, \, u_{1,2} + u_{2,2} \geq 0, \, u_{1,2} - u_{2,2} \geq 0, \,\lambda_1 - u_{1,2} \geq 0,
\end{equation}
see \cite[Section 2.4]{ChoKimSO} for the explicit construction of the GZ system. The image is a polytope in $\R^3$ as depicted in Figure~\ref{fig_holomorphicdiscofMaslovindextwosoog25}.

There are five basic classes that can be represented by a holomorphic disk of Maslov index two emanating from the inverse image of a facet. As in Lemma~\ref{lemma_computationofogw}, the counting invariant of each basic class is one. In addition to the five basic classes, by examining the toric degenerations and its resolutions as in Proposition~\ref{proposition_potentialfunctionm}, we classify all possible boundary classes in $H^1(L_1; \Z)$ of holomorphic disks of Maslov index two, which contribute to the disk potential. The disk potential is then of the form
$$
W_{L_1} (\mathbf{z}) = \frac{1}{z_{1,2}} + \frac{z_{1,2}}{z_{2,2}} + z_{1,2} z_{2,2} + \kappa \cdot z_{1,2} + \frac{z_{1,2}}{z_{1,1}} + z_{1,2}z_{1,1}.
$$
As $\mathrm{OG}(3,\C^6) \simeq \mathrm{OG}(2, \C^5) \simeq \CP^3$, the list of critical values is $4, -4, 4  \sqrt{-1},$ and $- 4  \sqrt{-1}$. The only possible $\kappa$ such that the set of critical values of $W_{L_1}$ is contained in $\{\pm4, \pm 4 \sqrt{-1}\}$ is $0$. Therefore, we derive the disk potential of the GZ system for $\mathrm{OG}(2, \C^5)$.

\begin{theorem}
The disk potential function of the monotone Gelfand--Zeitlin fiber for $\mathrm{OG}(2, \C^5)$ is 
$$
W_{L_1} (\mathbf{z}) = \frac{1}{z_{1,2}} + \frac{z_{1,2}}{z_{2,2}} + z_{1,2} z_{2,2} + \frac{z_{1,2}}{z_{1,1}} + z_{1,2}z_{1,1}.
$$
\end{theorem}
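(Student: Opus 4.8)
The plan is to rerun, essentially verbatim, the three-stage argument of Sections~\ref{sect_effectivediscclasses} and~\ref{section_computingdiscpotfuncs} for the quadric, now applied to $\mcal{O}^5_\lambda \simeq \mathrm{OG}(2,\C^5)$ with its GZ system whose moment image is the polytope~\eqref{equ_GZpolytopeog25}. First I would establish that the central GZ fiber $L_1 := (\Phi^5_\lambda)^{-1}(\mathbf{u}_0)$ is a monotone Lagrangian torus exactly as in Proposition~\ref{cor_monotonetorus}: at the center the GZ map generates a Hamiltonian $T^3$-action, and each of the five facet inequalities of~\eqref{equ_GZpolytopeog25} gives a Hamiltonian $S^1$-action whose maximal fixed component lies over that facet, hence (by Proposition~\ref{corollary_Maslov_index_formula}) a gradient holomorphic disk of Maslov index two and area one; since $\mathrm{OG}(2,\C^5) \cong \CP^3$ these five classes, together with a line class of area one, form a basis of $\pi_2(\mathrm{OG}(2,\C^5),L_1) \cong \Z^4$ on which $\omega = \mu/2$, so $L_1$ is monotone.

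Next I would classify the Maslov-index-two classes contributing to $W_{L_1}$ using the ``standard'' toric degeneration of $\mathrm{OG}(2,\C^5)$ compatible with the GZ system (as in \cite{NishinouNoharaUeda2}), with central fiber the toric orbifold $\mcal{X}_0$ attached to~\eqref{equ_GZpolytopeog25} and its smooth toric blowup $\widehat{\mcal{X}}_0$ obtained by chopping off the singular stratum of the polytope. As in Section~\ref{sect_effectivediscclasses}, Lemma~\ref{prop_Maslovzero} together with monotonicity gives the analogue of Lemma~\ref{lemma_maslovinter}; the comparison maps $\Psi_{1,*}$ and $\overline{\Pi}_*$ preserve Maslov indices as in Lemmas~\ref{lemma_gradientdiscflow}--\ref{lemma_3maslovcorr}; the area estimate of Proposition~\ref{proposition_classification} rules out bubbling at area one; and the Cho--Oh/Cho--Poddar classification (as in the proof of Proposition~\ref{proposition_charmaslovindextwo}) forces any contributing class to meet the inverse image of a single facet, or of the singular stratum, exactly once. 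This yields
\begin{equation*}
W_{L_1}(\mathbf{z}) = \frac{1}{z_{1,2}} + \frac{z_{1,2}}{z_{2,2}} + z_{1,2}z_{2,2} + \frac{z_{1,2}}{z_{1,1}} + z_{1,2}z_{1,1} + \kappa \cdot z_{1,2}
\end{equation*}
with $\kappa \in \Z$ (the monomial $z_{1,2}$ for the singular-stratum class being read off via Corollary~\ref{corollary_twodiscs}; the five facet coefficients equal $1$ by the toric comparison of Lemma~\ref{lemma_computationofogw}; integrality of $\kappa$ because the perturbation is geometric, as in Proposition~\ref{proposition_potentialfunctionm}).

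It then remains to show $\kappa = 0$, which I would do exactly as in Proposition~\ref{proposition_openGWin}. The critical-point equations give $z_{1,1}^2 = z_{2,2}^2 = 1$ and $z_{1,2}^{-2} = (z_{1,1} + z_{1,1}^{-1}) + (z_{2,2} + z_{2,2}^{-1}) + \kappa \in \{\kappa+4, \kappa, \kappa-4\}$, so at every critical point the value of $W_{L_1}$ simplifies to $2/z_{1,2}$, which is never $0$. Since for any $\kappa$ at least one of $\kappa+4,\kappa,\kappa-4$ is nonzero, $W_{L_1}$ has a critical point in $(\C^*)^3$, so $(L_1,\nabla)$ is a non-zero object of the monotone Fukaya category for a suitable flat connection $\nabla$; by Theorem~\ref{theorem_corsheridan} every critical value of $W_{L_1}$ must therefore be an eigenvalue of $c_1(T\CP^3)\star$ on $QH^*(\CP^3)$, i.e.\ lie in $\{0,4,-4,4\sqrt{-1},-4\sqrt{-1}\}$. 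Squaring $2/z_{1,2}$ shows each nonzero value among $\kappa+4,\kappa,\kappa-4$ must equal $\pm 4$, and a short check of the integer possibilities leaves only $\kappa = 0$. Combining the three stages gives the asserted formula for $W_{L_1}$.

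As in the quadric, the hard part is the second stage: identifying the singular stratum (or strata) of~\eqref{equ_GZpolytopeog25} together with the topology of the GZ fibers over it --- this is where a $T^*\mathrm{SO}(3)$-type local model intervenes --- checking that the chopped polytope really defines a semi-Fano smooth toric blowup (so that only Chern-number-zero spheres bubble, cf.\ the proof of Lemma~\ref{lemma_aaa}), and then pushing the degeneration-and-resolution comparison through to conclude that no additional Maslov-index-two boundary class survives. Once the shape of $W_{L_1}$ is pinned down up to $\kappa$, the determination of $\kappa$ is a finite computation.
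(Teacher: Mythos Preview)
Your proposal is correct and follows essentially the same route as the paper: reduce to the form $W_{L_1}$ with an undetermined integer coefficient $\kappa$ via the toric degeneration and resolution comparison, then pin down $\kappa$ by matching critical values against the eigenvalues of $c_1(T\CP^3)\star$ on $QH^*(\CP^3)$. Two small cosmetic points: the eigenvalue set for $\CP^3$ is exactly $\{\pm 4,\pm 4\sqrt{-1}\}$ (no $0$), though your observation that $2/z_{1,2}\neq 0$ makes this harmless; and in the monotonicity step, five facet classes cannot literally be part of a basis of $\pi_2(\mathrm{OG}(2,\C^5),L_1)\cong\Z^4$ --- there is one linear relation among them --- but since each satisfies $\omega=\mu/2$ and they span over $\Q$, monotonicity follows just the same.
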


\begin{figure}[h]
	\scalebox{0.85}{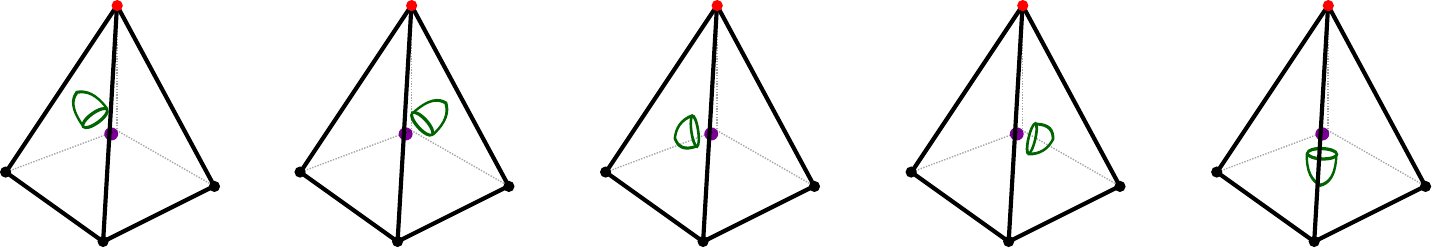}
	\caption{\label{fig_holomorphicdiscofMaslovindextwosoog25} Holomorphic disks of Maslov index two bounded by $L_1$ in $\mathrm{OG}(2, \C^5)$}	
\end{figure}

The GZ fiber at the tip of the pyramid ($u_{1,1} = u_{1,2} = u_{2,1} = 0$) in Figure~\ref{fig_holomorphicdiscofMaslovindextwosoog25} is diffeomorphic to $\mathrm{SO}(3)$. Moreover, the complement of the divisor over $\lambda_1 - u_{2,1} = 0$ is $T^*\mathrm{SO}(3)$. From the global model, we can understand the moduli spaces of holomorphic disks of Maslov index two in $T^*\mathrm{SO}(3)$ bounded by a Lagrangian torus which degenerates into a toric fiber in the chart around the red tip point of the toric variety associated to the polytope given by~\eqref{equ_GZpolytopeog25}. We then obtain another local model $T^* \mathrm{SO}(3)$, which is sitting inside of $\mathrm{OG}(2, \C^5)$.

\providecommand{\bysame}{\leavevmode\hbox to3em{\hrulefill}\thinspace}
\providecommand{\MR}{\relax\ifhmode\unskip\space\fi MR }
% \MRhref is called by the amsart/book/proc definition of \MR.
\providecommand{\MRhref}[2]{%
  \href{http://www.ams.org/mathscinet-getitem?mr=#1}{#2}
}
\providecommand{\href}[2]{#2}


\begin{thebibliography}{BCFKvS00}

\bibitem[Aur07]{AurouxTdual}
Denis Auroux, \emph{Mirror symmetry and {$T$}-duality in the complement of an
  anticanonical divisor}, J. G\"{o}kova Geom. Topol. GGT \textbf{1} (2007),
  51--91. \MR{2386535}

\bibitem[Aur09]{AurouxSpecial}
\bysame, \emph{Special {L}agrangian fibrations, wall-crossing, and mirror
  symmetry}, Surveys in differential geometry. {V}ol. {XIII}. {G}eometry,
  analysis, and algebraic geometry: forty years of the {J}ournal of
  {D}ifferential {G}eometry, Surv. Differ. Geom., vol.~13, Int. Press,
  Somerville, MA, 2009, pp.~1--47. \MR{2537081}

\bibitem[BC12]{BiranCornea}
Paul Biran and Octav Cornea, \emph{Lagrangian topology and enumerative
  geometry}, Geom. Topol. \textbf{16} (2012), no.~2, 963--1052. \MR{2928987}

\bibitem[BCFKvS00]{BCFKvS}
Victor~V. Batyrev, Ionu\c{t} Ciocan-Fontanine, Bumsig Kim, and Duco van
  Straten, \emph{Mirror symmetry and toric degenerations of partial flag
  manifolds}, Acta Math. \textbf{184} (2000), no.~1, 1--39. \MR{1756568}

\bibitem[BZ01]{BerensteinZelevinsky}
Arkady Berenstein and Andrei Zelevinsky, \emph{Tensor product multiplicities,
  canonical bases and totally positive varieties}, Invent. Math. \textbf{143}
  (2001), no.~1, 77--128. \MR{1802793}

\bibitem[Cal02]{Caldero}
Philippe Caldero, \emph{Toric degenerations of {S}chubert varieties},
  Transform. Groups \textbf{7} (2002), no.~1, 51--60. \MR{1888475}

\bibitem[Cho04]{Cho}
Cheol-Hyun Cho, \emph{Holomorphic discs, spin structures, and {F}loer
  cohomology of the {C}lifford torus}, Int. Math. Res. Not. (2004), no.~35,
  1803--1843. \MR{2057871}

\bibitem[Cho08]{Chononunitrary}
\bysame, \emph{Non-displaceable {L}agrangian submanifolds and {F}loer
  cohomology with non-unitary line bundle}, J. Geom. Phys. \textbf{58} (2008),
  no.~11, 1465--1476. \MR{2463805}

  \bibitem[CK21]{ChoKimMONO}
Yunhyung Cho and Yoosik Kim, \emph{Monotone {L}agrangians in flag varieties},
  Int. Math. Res. Not. IMRN (2021), no.~18, 13892--13945. \MR{4320800}

\bibitem[CK20]{ChoKimSO}
\bysame, \emph{Lagrangian fibers of {G}elfand--{C}etlin systems of
  ${SO}(n)$-type}, Transform. Groups \textbf{25} (2020), no.~4, 1063--1102.
  \MR{4166681}

\bibitem[CKLP23]{ChoKimLeePark}
Yunhyung Cho, Yoosik Kim, Eunjeong Lee, and Kyeong-Dong Park, \emph{Small toric
  resolutions of toric varieties of string polytopes with small indices}, Commun. Contemp. 
  Math. \textbf{25} (2023), no.~1, Paper No. 2150112, 56 pp. \MR{4523149}

\bibitem[CKO20]{ChoKimOh1}
Yunhyung Cho, Yoosik Kim, and Yong-Geun Oh, \emph{Lagrangian fibers of
  {G}elfand-{C}etlin systems}, Adv. Math. \textbf{372} (2020), 107304, 57 pp.
  \MR{4126721}

\bibitem[CLL12]{ChanLauLeung}
Kwokwai Chan, Siu-Cheong Lau, and Naichung~Conan Leung, \emph{S{YZ} mirror
  symmetry for toric {C}alabi-{Y}au manifolds}, J. Differential Geom.
  \textbf{90} (2012), no.~2, 177--250. \MR{2899874}

\bibitem[CLLT17]{ChanLauLeungTseng}
Kwokwai Chan, Siu-Cheong Lau, Naichung~Conan Leung, and Hsian-Hua Tseng,
  \emph{Open {G}romov-{W}itten invariants, mirror maps, and {S}eidel
  representations for toric manifolds}, Duke Math. J. \textbf{166} (2017),
  no.~8, 1405--1462. \MR{3659939}

\bibitem[CO06]{ChoOh}
Cheol-Hyun Cho and Yong-Geun Oh, \emph{Floer cohomology and disc instantons of
  {L}agrangian torus fibers in {F}ano toric manifolds}, Asian J. Math.
  \textbf{10} (2006), no.~4, 773--814. \MR{2282365}

\bibitem[CP14]{ChoPoddar}
Cheol-Hyun Cho and Mainak Poddar, \emph{Holomorphic orbi-discs and {L}agrangian
  {F}loer cohomology of symplectic toric orbifolds}, J. Differential Geom.
  \textbf{98} (2014), no.~1, 21--116. \MR{3263515}

\bibitem[CW22]{CharestWoodward}
Fran{\c c}ois Charest and Chris~T. Woodward, \emph{Floer cohomology and flips}, 
Mem. Amer. Math. Soc. \textbf{279} (2022), no.~1372, v+166 pp. \MR{4464438}

\bibitem[EP97]{EliashbergPolterovich}
Yakov Eliashberg and Leonid Polterovich, \emph{The problem of {L}agrangian
  knots in four-manifolds}, Geometric topology ({A}thens, {GA}, 1993), AMS/IP
  Stud. Adv. Math., vol.~2, Amer. Math. Soc., Providence, RI, 1997,
  pp.~313--327. \MR{1470735}

\bibitem[FOOO09a]{FOOOMorse}
Kenji Fukaya, Yong-Geun Oh, Hiroshi Ohta, and Kaoru Ono, \emph{Canonical models
  of filtered {$A_\infty$}-algebras and {M}orse complexes}, New perspectives
  and challenges in symplectic field theory, CRM Proc. Lecture Notes, vol.~49,
  Amer. Math. Soc., Providence, RI, 2009, pp.~201--227. \MR{2555938}

\bibitem[FOOO09b]{FOOObook1}
\bysame, \emph{Lagrangian intersection {F}loer theory: anomaly and obstruction.
  {P}art {I}}, AMS/IP Studies in Advanced Mathematics, vol.~46, American
  Mathematical Society, Providence, RI; International Press, Somerville, MA,
  2009. \MR{2553465}

\bibitem[FOOO10]{FOOOToric1}
\bysame, \emph{Lagrangian {F}loer theory on compact toric manifolds. {I}}, Duke
  Math. J. \textbf{151} (2010), no.~1, 23--174. \MR{2573826}

\bibitem[FOOO11]{FOOOToric2}
\bysame, \emph{Lagrangian {F}loer theory on compact toric manifolds {II}: bulk
  deformations}, Selecta Math. (N.S.) \textbf{17} (2011), no.~3, 609--711.
  \MR{2827178}

\bibitem[FOOO12]{FOOOS2S2}
\bysame, \emph{Toric degeneration and nondisplaceable {L}agrangian tori in
  {$S^2\times S^2$}}, Int. Math. Res. Not. IMRN (2012), no.~13, 2942--2993.
  \MR{2946229}

\bibitem[Fuk10]{FukayaCyclic}
Kenji Fukaya, \emph{Cyclic symmetry and adic convergence in {L}agrangian
  {F}loer theory}, Kyoto J. Math. \textbf{50} (2010), no.~3, 521--590.
  \MR{2723862}

\bibitem[GL96]{GonciuleaLakshmibai}
N.~Gonciulea and V.~Lakshmibai, \emph{Degenerations of flag and {S}chubert
  varieties to toric varieties}, Transform. Groups \textbf{1} (1996), no.~3,
  215--248. \MR{1417711}

\bibitem[GS83a]{GuilleminSternbergGC}
Victor Guillemin and Shlomo Sternberg, \emph{The {G}el'fand-{C}etlin system and
  quantization of the complex flag manifolds}, J. Funct. Anal. \textbf{52}
  (1983), no.~1, 106--128. \MR{705993}

\bibitem[GS83b]{GuilleminSternbergCCI}
\bysame, \emph{On collective complete integrability according to the method of
  {T}himm}, Ergodic Theory Dynam. Systems \textbf{3} (1983), no.~2, 219--230.
  \MR{742224}

\bibitem[HK97]{HausmannKnutson}
Jean-Claude Hausmann and Allen Knutson, \emph{Polygon spaces and
  {G}rassmannians}, Enseign. Math. (2) \textbf{43} (1997), no.~1-2, 173--198.
  \MR{1460127}

\bibitem[HK15]{HaradaKaveh}
Megumi Harada and Kiumars Kaveh, \emph{Integrable systems, toric degenerations
  and {O}kounkov bodies}, Invent. Math. \textbf{202} (2015), no.~3, 927--985.
  \MR{3425384}

\bibitem[HKL19]{HongKimLau}
Hansol Hong, Yoosik Kim, and Siu-Cheong Lau, \emph{Immersed two-spheres and
  {SYZ} with {A}pplication to {G}rassmannians}, preprint (2019),
  \href{https://arxiv.org/abs/1805.11738}{arXiv:1805.11738}.

\bibitem[Kar99]{KarshonPHF}
Yael Karshon, \emph{Periodic {H}amiltonian flows on four-dimensional
  manifolds}, Mem. Amer. Math. Soc. \textbf{141} (1999), no.~672, viii+71.
  \MR{1612833}

\bibitem[KM96]{KapovichMillson}
Michael Kapovich and John~J. Millson, \emph{The symplectic geometry of polygons
  in {E}uclidean space}, J. Differential Geom. \textbf{44} (1996), no.~3,
  479--513. \MR{1431002}

\bibitem[Lan20]{Lanesympcont}
Jeremy Lane, \emph{The geometric structure of symplectic contraction}, Int.
  Math. Res. Not. IMRN (2020), no.~12, 3521--3539. \MR{4120303}

\bibitem[Lit98]{Littelmann}
P.~Littelmann, \emph{Cones, crystals, and patterns}, Transform. Groups
  \textbf{3} (1998), no.~2, 145--179. \MR{1628449}

\bibitem[NNU10a]{NishinouNoharaUeda2}
Takeo Nishinou, Yuichi Nohara, and Kazushi Ueda, \emph{Potential functions via
  toric degenerations}, preprint (2010),
  \href{https://arxiv.org/abs/0812.0066}{arXiv:0812.0066}.

\bibitem[NNU10b]{NishinouNoharaUeda}
\bysame, \emph{Toric degenerations of {G}elfand-{C}etlin systems and potential
  functions}, Adv. Math. \textbf{224} (2010), no.~2, 648--706. \MR{2609019}

\bibitem[NU14]{NoharaUedaGrPoly}
Yuichi Nohara and Kazushi Ueda, \emph{Toric degenerations of integrable systems
  on {G}rassmannians and polygon spaces}, Nagoya Math. J. \textbf{214} (2014),
  125--168. \MR{3211821}

\bibitem[Oh93a]{OhMonotone}
Yong-Geun Oh, \emph{Floer cohomology of {L}agrangian intersections and
  pseudo-holomorphic disks. {I}}, Comm. Pure Appl. Math. \textbf{46} (1993),
  no.~7, 949--993. \MR{1223659}

\bibitem[Oh93b]{OhMonotone2}
\bysame, \emph{Floer cohomology of {L}agrangian intersections and
  pseudo-holomorphic disks. {II}. {$({\bf C}{\rm P}^n,{\bf R}{\rm P}^n)$}},
  Comm. Pure Appl. Math. \textbf{46} (1993), no.~7, 995--1012. \MR{1223660}

\bibitem[Oh96]{OhPearl}
\bysame, \emph{Relative {F}loer and quantum cohomology and the symplectic
  topology of {L}agrangian submanifolds}, Contact and symplectic geometry
  ({C}ambridge, 1994), Publ. Newton Inst., vol.~8, Cambridge Univ. Press,
  Cambridge, 1996, pp.~201--267. \MR{1432465}

\bibitem[OU16]{OakleyUsher}
Joel Oakley and Michael Usher, \emph{On certain {L}agrangian submanifolds of
  {$S^2\times S^2$} and {$\Bbb{C}{\rm P}^n$}}, Algebr. Geom. Topol. \textbf{16}
  (2016), no.~1, 149--209. \MR{3470699}
  
\bibitem[Pab12]{Pabiniak}  
Milena Pabiniak, \emph{Hamiltonian torus actions in equivariant cohomology and
 symplectic topology}, Thesis (Ph.D.)--Cornell University, (2012), 125 pp. \MR{3103586}

\bibitem[PRW16]{PechRietschWilliams}
C.~Pech, K.~Rietsch, and L.~Williams, \emph{On {L}andau-{G}inzburg models for
  quadrics and flat sections of {D}ubrovin connections}, Adv. Math.
  \textbf{300} (2016), 275--319. \MR{3534834}

\bibitem[Prz13]{Przhiya}
V.~V. Przhiyalkovski\u{\i}, \emph{Weak {L}andau-{G}inzburg models of smooth
  {F}ano threefolds}, Izv. Ross. Akad. Nauk Ser. Mat. \textbf{77} (2013),
  no.~4, 135--160. \MR{3135701}

\bibitem[Rie08]{Rietsch}
Konstanze Rietsch, \emph{A mirror symmetric construction of
  {$qH^\ast_T(G/P)_{(q)}$}}, Adv. Math. \textbf{217} (2008), no.~6, 2401--2442.
  \MR{2397456}

\bibitem[Rua02]{WERuan2}
Wei-Dong Ruan, \emph{Lagrangian torus fibration of quintic {C}alabi-{Y}au
  hypersurfaces. {II}. {T}echnical results on gradient flow construction}, J.
  Symplectic Geom. \textbf{1} (2002), no.~3, 435--521. \MR{1959057}

\bibitem[She16]{SheridanFano}
Nick Sheridan, \emph{On the {F}ukaya category of a {F}ano hypersurface in
  projective space}, Publ. Math. Inst. Hautes \'{E}tudes Sci. \textbf{124}
  (2016), 165--317. \MR{3578916}

\bibitem[Thi81]{Thimm}
A.~Thimm, \emph{Integrable geodesic flows on homogeneous spaces}, Ergodic
  Theory Dynam. Systems \textbf{1} (1981), no.~4, 495--517 (1982). \MR{662740}

\bibitem[Woo11]{WoodwardToric}
Christopher~T. Woodward, \emph{Gauged {F}loer theory of toric moment fibers},
  Geom. Funct. Anal. \textbf{21} (2011), no.~3, 680--749. \MR{2810861}

\end{thebibliography}
\end{document}